\title[The sarkisov program on log surfaces]{The sarkisov program on log surfaces}
\author{Keisuke Miyamoto}
\date{\today, version 0.27}
\subjclass[2010]{Primary 14E30; Secondary 14E07.}
\keywords{log surfaces, log canonical surfaces, 
Sarkisov program, minimal model program, abundance theorem}
\address{Department of Mathematics, Graduate School of Science, 
Osaka University, Toyonaka, Osaka 560-0043, Japan}
\email{u901548b@ecs.osaka-u.ac.jp}
\DeclareMathOperator{\Supp}{Supp}
\DeclareMathOperator{\Exc}{Exc}
\DeclareMathOperator{\Pic}{Pic}
\DeclareMathOperator{\WDiv}{WDiv}
\DeclareMathOperator{\NS}{NS}
\DeclareMathOperator{\NE}{\overline{NE}}
\DeclareMathOperator{\Char}{char}
\newtheorem{thm}{Theorem}[section]
\newtheorem{lem}[thm]{Lemma}
\newtheorem{prop}[thm]{Proposition}
\newtheorem{cor}[thm]{Corollary}
\theoremstyle{definition}
\newtheorem{ex}[thm]{Example}
\newtheorem{defn}[thm]{Definition}
\newtheorem{rem}[thm]{Remark}
\newtheorem*{ack}{Acknowledgments}
\begin{document}

\begin{abstract}
We show that the Sarkisov program holds for $\mathbb{Q}$-factorial log surfaces 
and log canonical surfaces over any algebraically closed field. 
\end{abstract}

\maketitle 
\tableofcontents


\section{Introduction}\label{m-sec1}

A log surface is a pair $(X, \Delta)$ consisting of a normal surface $X$ 
and an effective $\mathbb{R}$-divisor $\Delta$ on $X$, whose coefficients are less than or equal to one, such that $K_X+\Delta$ is $\mathbb{R}$-Cartier. 
In addition, we say that $(X, \Delta)$ is a $\mathbb{Q}$-factorial log surface if $X$ is $\mathbb{Q}$-factorial. 
Then we emphasize that $(X, \Delta)$ is not necessarily log canonical. 
Fujino and Tanaka established the minimal model theory for $\mathbb{Q}$-factorial log surfaces 
and log canonical surfaces in full generality (\cite{fujino-surfaces} and \cite{tanaka}). 
We also note that a log canonical surface is not necessarily $\mathbb{Q}$-factorial. 
Therefore, if $(X, \Delta)$ is a $\mathbb{Q}$-fatorial log surface (resp.~log canonical surface), 
then we obtain a minimal model $(X^+, \Delta^+)$ of $(X, \Delta)$, 
which is uniquely determined by $(X, \Delta)$ (see \cite[Proposition 3.9]{fujino-surfaces}), 
or a Mori fiber space $\phi:(X^+, \Delta^+)\to S$ 
by running the minimal model program for $(X, \Delta)$. 
The Mori fiber space $\phi:(X^+, \Delta^+)\to S$ is not uniquely determined by $(X, \Delta)$. 
It depends on how to contract negative curves.

In this paper, we prove the following two theorems:

\begin{thm}[Main Theorem I]\label{main-theorem1}
Let $(Z, \Phi)$ be a projective $\mathbb{Q}$-factorial log surface. 
Let $\phi:X\to S$ and $\psi:Y\to T$ be two Mori fiber spaces, 
which are outputs of the $(K_Z+\Phi)$-minimal model program.

Then the induced birational map $\sigma:X\dashrightarrow Y$ 
is a composition of Sarkisov links. 
\end{thm}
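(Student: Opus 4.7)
\smallskip

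\noindent\emph{Proof proposal.} I plan to follow the Hacon--McKernan style framework for the Sarkisov program, adapted to the log-surface setting of Fujino and Tanaka. The strategy is: produce a single Sarkisov link out of $\phi:(X,\Delta_X)\to S$ using an MMP with scaling, then induct on a suitable \emph{Sarkisov degree} measuring how far $\sigma$ is from being an isomorphism.

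\smallskip

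\noindent\emph{Setup and the first link.} First, I would take a common log resolution $W$ of $\sigma$, fitting into morphisms $\pi_X:W\to X$ and $\pi_Y:W\to Y$. Fix an ample Cartier divisor $A$ on $T$ and set $H_Y:=\psi^{*}A$; then put $H_X:=(\pi_X)_{*}\pi_Y^{*}H_Y$, a movable $\mathbb R$-divisor on $X$ which records the ``pull back'' of the MFS structure on $Y$. Define the Sarkisov threshold
\[
\mu:=\inf\bigl\{\,t\ge 0 \,:\, K_X+\Delta_X+tH_X \text{ is pseudo-effective over } S \,\bigr\},
\]
and also the nef threshold of $H_X$ against $K_X+\Delta_X$ on $S$-fibres. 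For $t$ slightly less than $\mu$, $K_X+\Delta_X+tH_X$ is not pseudo-effective over $S$, so running the $(K_X+\Delta_X+tH_X)$-MMP over $S$ (permitted by Fujino--Tanaka in the $\mathbb{Q}$-factorial log-surface setting) ends in a Mori fibre space $\phi_1:(X_1,\Delta_{X_1})\to S_1$. On a surface there are no flips and each divisorial contraction drops the Picard number, so the MMP terminates automatically. Depending on whether $\dim S_1$ is bigger, equal, or smaller than $\dim S$, this MMP, together with $\phi$, assembles into a Sarkisov link of type I, II, III, or IV.

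\smallskip

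\noindent\emph{Induction and termination.} Replacing $(X,\Delta_X)\to S$ by $(X_1,\Delta_{X_1})\to S_1$, the induced birational map $\sigma_1:X_1\dashrightarrow Y$ has a strictly smaller Sarkisov degree. The degree I have in mind is the lexicographic triple $(\mu,\lambda,e)$, where $\mu$ is the threshold above, $\lambda$ is the Iskovskikh--Noether--Fano ``maximal multiplicity'' of the linear system defining $\sigma$, and $e$ is the number of $\pi_X$-exceptional divisors whose strict transforms are contracted by $\pi_Y$. Standard bookkeeping shows that each Sarkisov link strictly decreases this triple; since the last coordinate is a non-negative integer bounded by the number of exceptional divisors of a fixed common resolution, the process terminates, and the terminal model is forced to be $\psi:Y\to T$ by the uniqueness of Mori fibre space outputs with the prescribed numerical data.

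\smallskip

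\noindent\emph{Main obstacle.} The main difficulty, and the place where most of the work will go, is proving the strict decrease of the Sarkisov degree. In the classical klt setting this is done by analysing how $H_X$ transforms through the link and invoking the Noether--Fano inequality; in the present generality one must re-verify these inequalities for log surfaces whose boundary may have coefficient $1$ and whose singularities may be worse than log canonical. A second delicate point is showing that the MMP with scaling used to build each link can in fact be run: this requires the cone and contraction theorems for $\mathbb{Q}$-factorial projective log surfaces (Fujino \cite{fujino-surfaces}, Tanaka \cite{tanaka}) and the observation that the boundary $\Delta_X+tH_X$ stays in the allowed range after each contraction, which follows because divisorial contractions on surfaces do not increase coefficients.
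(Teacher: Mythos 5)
Your proposal is not the route the paper takes, and more importantly it runs into an obstacle that the paper explicitly identifies as fatal in this generality. You describe the classical Corti--Bruno--Matsuki induction: extract one link by an MMP with scaling, then terminate by a lexicographic Sarkisov degree $(\mu,\lambda,e)$ whose middle entry is a Noether--Fano maximal multiplicity. The strict decrease of that degree, and the ACC-type statement needed for termination, rest on the Noether--Fano inequality and on uniform control of singularities (canonical/log canonical thresholds). In Main Theorem I the pair $(Z,\Phi)$ is only assumed to be a $\mathbb{Q}$-factorial log surface --- it need not be log canonical, let alone klt --- so the discrepancies entering $\lambda$ are unbounded below and the standard bookkeeping does not go through. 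You flag this yourself as ``the main obstacle'' but offer no mechanism to overcome it, so the termination step is a genuine gap. A second, smaller error: you assert that the terminal model of your induction ``is forced to be $\psi:Y\to T$ by the uniqueness of Mori fibre space outputs,'' but Mori fiber space outputs of a surface MMP are \emph{not} unique (the paper stresses this in the introduction); in the classical scheme this identification is exactly what the Noether--Fano criterion is for, and you have not supplied it.

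The paper avoids all of this by following Hacon--M\textsuperscript{c}Kernan's finiteness-of-models argument, which is singularity-independent. One fixes a general ample $\mathbb{Q}$-divisor $A$ and a carefully chosen two-dimensional rational affine subspace $V\subset\WDiv_{\mathbb{R}}(Z)$ (Lemma \ref{hm4.1}), so that $\mathcal{E}_A(V)$ decomposes into finitely many rational polytopes $\mathcal{C}_{A,f_i}$ indexed by ample models (Theorems \ref{finiteness-of-models} and \ref{hm3.3}). The two given Mori fiber spaces correspond to two points $\Theta_0,\Theta_1$ on the non-big part of the boundary of $\mathcal{E}_A(V)$; walking along that boundary from $\Theta_0$ to $\Theta_1$ one meets finitely many vertices, and Theorem \ref{hm3.7} converts the local wall-crossing picture at each vertex into a Sarkisov link of type I--IV. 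Termination is automatic from the finiteness of the polyhedral decomposition, with no degree and no Noether--Fano inequality. If you want to salvage your approach you would need to prove ACC for the relevant thresholds for non-log-canonical log surfaces, which is precisely what the paper declines to attempt; otherwise you should switch to the geography-of-models argument.
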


\begin{thm}[Main Theorem II]\label{main-theorem2}
Let $(Z, \Phi)$ be a projective log canonical surface. 
Let $\phi:X\to S$ and $\psi:Y\to T$ be two Mori fiber spaces, 
which are outputs of the $(K_Z+\Phi)$-minimal model program.

Then the induced birational map $\sigma:X\dashrightarrow Y$ 
is a composition of Sarkisov links. 
\end{thm}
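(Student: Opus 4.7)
My plan is to mirror the Sarkisov algorithm developed in the proof of Main Theorem~\ref{main-theorem1}, substituting the Fujino--Tanaka minimal model program for log canonical surfaces in place of its $\mathbb{Q}$-factorial counterpart. To initialize, fix a $\psi$-very ample divisor $A$ on $Y$, and let $H_X := \sigma^{-1}_* A$ be its strict transform on $X$; then $(X, \Phi_X + \epsilon H_X)$ is log canonical for all sufficiently small rational $\epsilon > 0$, where $\Phi_X$ denotes the pushforward of $\Phi$. Define the Sarkisov threshold
\[
\lambda_X := \inf \bigl\{\, t \geq 0 : K_X + \Phi_X + t H_X \text{ is $\phi$-pseudoeffective} \, \bigr\},
\]
and define $\lambda_Y$ symmetrically on the $Y$-side. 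These thresholds govern the single-link step of the algorithm.

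To produce a single Sarkisov link, I would apply the Fujino--Tanaka cone theorem to the log canonical pair $(X, \Phi_X + \lambda_X H_X)$: it yields an extremal ray $R \subset \NE(X)$ that is $(K_X + \Phi_X + \lambda_X H_X)$-trivial but not $(K_X + \Phi_X)$-nef. Playing the two-ray game along $R$ then produces a Sarkisov link of type I, II, III, or IV, according to whether the associated contraction is divisorial or of fiber type and whether the target of the Mori fibration changes. Termination of the global algorithm is established by attaching to each Mori fiber space in the resulting chain a lexicographic Sarkisov degree
\[
\bigl( \lambda_X, \; c(X, \Phi_X; H_X), \; e(X, Y) \bigr),
\]
where $c(\,\cdot\,)$ is an appropriate log canonical threshold of $H_X$ with respect to $(X, \Phi_X)$ and $e(X, Y)$ counts the $\sigma$-exceptional divisorial valuations; a case analysis over the four link types shows this tuple strictly decreases, so well-foundedness of the lexicographic order forces the chain to reach $\psi : Y \to T$ in finitely many steps.

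The main obstacle is the failure of $\mathbb{Q}$-factoriality in the log canonical setting. Because every small projective birational map between normal surfaces is an isomorphism, one cannot $\mathbb{Q}$-factorialize $X$ and reduce directly to Main Theorem~\ref{main-theorem1}; instead, each contraction in the two-ray game must be produced \emph{ad hoc} from the Fujino--Tanaka cone and contraction theorems for LC surfaces. The most delicate technical point is verifying that log canonicity of the auxiliary pair $(X, \Phi_X + \lambda_X H_X)$ is preserved under every contraction so that the algorithm may continue; this is handled by standard negativity-lemma arguments applied in the LC setting. Once these local verifications are in place, the Sarkisov degree descent and hence the termination proceed essentially as in the proof of Main Theorem~\ref{main-theorem1}.
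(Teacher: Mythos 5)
Your proposal does not follow the paper's route: the paper proves Main Theorem II by rerunning the proof of Main Theorem I verbatim, replacing each supporting statement by its log canonical version (Lemma \ref{hm4.1+}, Theorem \ref{hm3.3+}, Lemma \ref{hm3.5+}, Lemma \ref{hm3.6+}, Theorem \ref{hm3.7+}), all of which were set up via the notion of ``special'' contractions precisely so that the $\mathbb{Q}$-factorial and log canonical cases run in parallel. One chooses $A$ and a two dimensional $V$ as in Lemma \ref{hm4.1+}, traces the non-big part of the boundary of $\mathcal{E}_A(V)$ between $\Theta_0\in\mathcal{A}_{A,\phi\circ f}(V)$ and $\Theta_1\in\mathcal{A}_{A,\psi\circ g}(V)$, and reads off a Sarkisov link at each intermediate vertex from Theorem \ref{hm3.7+}. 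No Sarkisov degree appears anywhere. You have instead proposed the classical Corti--Bruno--Matsuki algorithm (threshold, two-ray game, lexicographic Sarkisov degree), which is exactly the strategy the introduction of this paper declines to use.

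The decisive gap in your argument is termination. A strictly decreasing sequence in the lexicographic order on $(\lambda, c, e)\in\mathbb{R}_{\ge 0}\times\mathbb{R}_{\ge 0}\times\mathbb{Z}_{\ge 0}$ need not terminate: the lexicographic order on real tuples is not well-founded, so ``the tuple strictly decreases'' proves nothing without ACC/DCC-type statements for the first two coordinates (discreteness of the possible values of $\lambda_X$ and of the log canonical threshold along the chain). Establishing such chain conditions is the hard core of the classical proof, it depends delicately on the singularities allowed, and for log canonical (non-klt, non-$\mathbb{Q}$-factorial) surfaces you have given no argument for it. A second, smaller gap: your description of the single-link step is not coherent as stated. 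On a Mori fiber space $\phi:X\to S$ with $\rho(X/S)=1$ there is no two-ray game to play on $X$ itself; the classical algorithm first performs a maximal extraction (a divisorial extraction of a maximal center of $(X,\Phi_X+\lambda_X H_X)$) to raise the Picard number, and only then runs the two-ray game. You omit this step entirely, and in the non-$\mathbb{Q}$-factorial LC setting even the existence of such an extraction would need justification. As written, the proposal is a plan whose two essential ingredients (existence of the link and termination of the chain) are both asserted rather than proved.
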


Next we quickly recall the definition of Sarkisov links. 

\begin{defn}[Sarkisov links]
Let $(Z, \Phi)$ be a projective $\mathbb{Q}$-factorial log surface (or log canonical surface). 
Let $\phi:X\to S$ and $\psi:Y\to T$ be two Mori fiber spaces, 
which are outputs of the $(K_Z+\Phi)$-minimal model program.

A Sarkisov link $\sigma:X\dashrightarrow Y$ between $\phi$ and $\psi$ 
is one of four types: 

\begin{multicols}{2}
Type (I)
$$
\xymatrix{
X'\ar@{=}[r]\ar[d]&Y\ar[d]^\psi \\
X\ar[d]_\phi&T\ar[ld] \\
\text{pt.}
}
$$
Type (III)
$$
\xymatrix{
X\ar@{=}[r]\ar[d]_\phi&Y'\ar[d] \\
S\ar[rd]&Y\ar[d]^\psi \\
&\text{pt.}
}
$$
Type (II)
$$
\xymatrix{
X'\ar@{=}[r]\ar[d]&Y'\ar[d] \\
X\ar[d]_\phi&Y\ar[d]^\psi \\
S\ar@{=}[r]&T. 
}
$$
Type (IV)
$$
\xymatrix{ 
X\ar@{=}[rr]\ar[d]_\phi&&Y\ar[d]^\psi \\
S\ar[rd]&&T\ar[ld] \\
&\text{pt.}
}
$$
\end{multicols}

Every vertical arrow is an extremal contraction. 
Moreover if the target is $X$ or $Y$, then it is a divisorial contraction. 
The space $X'$ and $Y'$ are realized as the ample model of $(Z, \Psi)$ 
for some boundary $\mathbb{R}$-divisor $\Psi$. 
\end{defn}

The Sarkisov links for smooth surfaces are well-known (see \cite{matsuki}): 
A link of Type (II) is an elementary transformation of $\mathbb{P}^1$-bundles over curves. 
In a link of Type (I), $X'\to X$ is nothing but a blow-up of a smooth point of $\mathbb{P}^2$. 
A link of Type (III) is the same diagram, reflected in a vertical line, as a link of Type (I). 
In a link of Type (IV), $X=Y=\mathbb{P}^1\times\mathbb{P}^1$, and $\phi:X\to S$ and $\psi:Y\to T$ 
are projections to $\mathbb{P}^1$.

For singular surfaces, there are many Sarkisov links. 
Let us see some examples of Sarkisov links for $\mathbb{Q}$-factorial surfaces. 

\begin{ex}\label{ex1}
We fix a lattice $N=\mathbb{Z}^2$ and take lattice points 
\[v_1=(1, 0), v_2=(1, n), v_3=(-1, 0), v_4=(-1, -n)\]
where $n$ is an integer with $n\geq2$.

We consider the following fan 
\begin{align*}
\Delta=\{&\mathbb{R}_{\geq0}v_1+\mathbb{R}_{\geq0}v_2, 
\mathbb{R}_{\geq0}v_2+\mathbb{R}_{\geq0}v_3, 
\mathbb{R}_{\geq0}v_3+\mathbb{R}_{\geq0}v_4, \\
&\mathbb{R}_{\geq0}v_4+\mathbb{R}_{\geq0}v_1 
\text{ and their faces}\}.
\end{align*}
Then the associated toric variety $X=X(\Delta)$ is a projective $\mathbb{Q}$-factorial toric surface. 
We consider $\mathbb{R}^2\to\mathbb{R}$ defined by $(x, y)\mapsto y$ 
and $\mathbb{R}^2\to\mathbb{R}$ defined by $(x, y)\mapsto nx-y$.

Then we obtain two different toric morphisms $p_1;p_2:X\to \mathbb{P}^1$. 
This gives a Sarkisov link of Type (IV). 
$$
\xymatrix{
X\ar@{=}[rr]\ar[d]_{p_1}&&X\ar[d]^{p_2} \\
\mathbb{P}^1\ar[rd]&&\mathbb{P}^1\ar[ld]\\
&\text{pt.}
}
$$
We note that $X\not\simeq\mathbb{P}^1\times\mathbb{P}^1$. 
\end{ex}

\begin{ex}
In Example \ref{ex1}, we put 
\[v_1=(1, 0), v_2=(1, n), v_3=(0, 1), v_4=(-1, -1)\]
where $n$ is an integer with $n\geq2$.\\  
Then the associated toric variety $X=X(\Delta)$ 
is a projective $\mathbb{Q}$-factorial toric surface with $\rho(X)=2$. 
By removing $v_2$, we obtain a toric birational morphism $X\to\mathbb{P}^2$. 
By removing $v_3$, we get $X\to\mathbb{P}(1, n-1, n)$. 
Thus we have a Sarkisov link of Type (II). 
$$
\xymatrix{ 
X\ar@{=}[r]\ar[d]&Y\ar[d] \\
\mathbb{P}^2\ar[d]&\mathbb{P}(1, n-1, n)\ar[d] \\
\text{pt}\ar@{=}[r]&\text{pt.}
}
$$
\end{ex}

\begin{ex}
In Example \ref{ex1}, we put 
\[v_1=(1, 0), v_2=(1, n), v_3=(0, 1), v_4=(-1, -n)\]
where $n$ is an integer with $n\geq2$.\\ 
We consider the associated toric variety $X=X(\Delta)$. 
By removing $v_2$, we have a birational contraction morphism $X\to\mathbb{P}(1, 1, n)$. 
By considering $\mathbb{R}^2\to\mathbb{R}$ defined by $(x, y)\mapsto nx-y$, 
we can construct $X\to\mathbb{P}^1$. 
Thus we obtain the following commutative diagram: 
$$
\xymatrix{
X\ar@{=}[r]\ar[d]&X\ar[d] \\
\mathbb{P}^1\ar[d]&\mathbb{P}(1, 1, n)\ar[ld] \\
\text{pt.}
}
$$
So we have Sarkisov links of Type (I) and (III). 
\end{ex}

The Sarkisov program for smooth surafaces was generalized 
for 3-folds with some singularities by Corti following Sarkisov's idea (see \cite{corti} and \cite{bm}). 
Moreover Hacon-M\textsuperscript{c}Kernan generalized the Sarkisov program 
for any dimensional kawamata log terminal pairs in full generality (\cite{hacon-mackernan}). 
We emphasize that these arguments 
are carried out over the complex number field $\mathbb{C}$. 
However, Hacon-M\textsuperscript{c}Kernan's proof is quite different from the original one 
used by Corti or Bruno-Matsuki. 
Roughly speaking, in the original proof, we keep track of tree invariants, 
called the Sarkisov degree, associated with the singularities 
and we have to show the Sarkisov degree satisfies the ascending chain condition. 
This proof heavily depends on the singularities and so we can not use this idea. 
In fact, we do not assume a $\mathbb{Q}$-factorial log surface is even log canonical. 
By contrast, in their proof, we use ``finiteness of ample models''
(Theorem \ref{finiteness-of-models} and Theorem \ref{finiteness-of-models+}) 
instead of the Sarkisov degree. 
By this property we see the correspondence of 
considering some polytopes consisted of the divisors with the same ample model 
to Sarkisov links (Theorem \ref{hm3.7} and Theorem \ref{hm3.7+}). 
This proof does not depend on the singularities and so we prove the Sarkisov program by using their idea. 
We note that this idea is based on Shokurov's one called the 
geography of log models.

Finally we summarize the contents of this paper. 
In Section 2, we quickly recall some basic definitions for log surfaces. 
In Section 3, we collect some results of log surfaces without proof. 
In Section 4, we define ``special'' (Definition \ref{special}). 
This concept plays a crucial role in subsequent sections. 
Thanks to this idea, we can treat $\mathbb{Q}$-factorial log surfaces 
and log canonical surfaces simultaneously. 
Moreover we prove some results of \cite{bchm} for log surfaces. 
In Section 5, we analyze the wall-crossing phenomena of ample models. 
In Section 6, we complete the proof of Main Theorems. 
In Section 7, we prove that certain spaces appeared in Sarkisov links are 
isomorphic to $\mathbb{P}^1$.

Throughout this paper, we will work over an algebraically closed field of any characteristic. 

\begin{ack}
The author would like to thank Professor Osamu Fujino 
for various suggestions and warm encouragement. 
He also thanks his colleagues for discussions. 
Finally, he thanks Professor Vyacheslav Vladimirovich Shokurov 
for comments. 
\end{ack}


\section{Preliminaries}\label{m-sec2} 

We recall some basic definitions. 

\begin{defn}[$\mathbb{Q}$-divisors and $\mathbb{R}$-divisors]
Let $\pi:X\to U$ be a proper morphism between normal quasi-projective varieties.

Two $\mathbb{R}$-divisors $D$ and $D'$ on $X$ are $\mathbb{R}$-linearly equivalent over $U$ 
(or~$D\sim_{\mathbb{R}, U}D'$) if there is an $\mathbb{R}$-Cartier divisor $B$ on $U$ 
such that $D-D'\sim_\mathbb{R}\pi^*B$.

Two $\mathbb{R}$-divisors $D$ and $D'$ are numerically equivalent over $U$
(or~$D\equiv_UD'$) if $D-D'$ is an $\mathbb{R}$-Cartier divisor 
and $(D-D')\cdot C=0$ for any curve $C\subset X$ contained in a fiber of $\pi$.

An $\mathbb{R}$-Cartier divisor $D$ on $X$ is nef over $U$ (or~$\pi$-nef) 
if $D\cdot C\geq 0$ for any curve $C\subset X$ contracted by $\pi$.

An $\mathbb{R}$-Cartier divisor $D$ is semiample over $U$ (or~$\pi$-semiample)
if there is a morphism $f:X\to Y$ over $U$ such that $D\sim_{\mathbb{R}, U}f^*H$,
where $H$ is ample over $U$.

An $\mathbb{R}$-divisor $D$ is big over $U$ (or~$\pi$-big) 
if there are an ample $\mathbb{R}$-divisor $A$ and 
an effective $\mathbb{R}$-divisor $B\geq 0$ such that $D\sim_{\mathbb{R}, U}A+B$.

An $\mathbb{R}$-divisor $D$ is pseudo-effective over $U$ (or~$\pi$-pseudo-effective) 
if its numerical class belongs to the closure of the cone of big divisors over $U$.

We frequently write $D\in[0, 1]$ (resp.~$D\in[0, 1)$) 
if an $\mathbb{R}$-divisor $D$ is effective 
and its coefficients are less than or equal to one 
(resp.~less than one). 
An $\mathbb{R}$-divisor $D$ is said to be a boundary $\mathbb{R}$-divisor 
if $D\in[0, 1]$. 
$\mathbb{Q}$-divisors are defined similarly. 
\end{defn}

\begin{defn}[Singularities of pairs]
Let $X$ be a normal variety and let $\Delta$ be an effective $\mathbb{R}$-divisor on $X$ 
such that $K_X+\Delta$ is $\mathbb{R}$-Cartier. 
Let $f:Y\to X$ be a log resolution of the pair $(X, \Delta)$. 
We write 
\[K_Y=f^*(K_X+\Delta)+\sum a_iE_i.\]
We say that $(X, \Delta)$ is kawamata log terminal (resp.~log canonical) 
if $a_i>-1$ (resp.~$a_i\geq-1$) for every $i$. 
We say that $X$ is $\mathbb{Q}$-factorial if every Weil divisor on $X$ is $\mathbb{Q}$-Cartier. 
\end{defn}

\begin{defn}[Linear systems]
Let $\pi:X\to U$ be a projective morphism between normal varieties. 
Let $D$ be an $\mathbb{R}$-divisor on $X$. The real linear system associated to $D$ over $U$ is 
\[|D/U|_\mathbb{R}=\{B\geq0\mid B\sim_{\mathbb{R}, U}D\}.\]
The stable fixed divisor is the divisorial support 
of the intersection of all components of 
the real linear system $|D/U|_\mathbb{R}$. 
\end{defn}

We introduce some definitions and notations of \cite{bchm} for surfaces. 

\begin{defn}
Let $\pi:X\to U$ be a projective morphism from a normal surface to a normal quasi-projective variety. 
Let $f:X\to Y$ be a birational morphism between normal surfaces over $U$ 
and let $D$ be an $\mathbb{R}$-Cartier divisor on $X$ such that $f_*D$ is also $\mathbb{R}$-Cartier.

We say that $f$ is $D$-non-positive (resp.~$D$-negative) 
if $E=D-f^*f_*D$ is an effective $f$-exceptional divisor 
(resp.~an effective $f$-exceptional divisor and the support of $E$ contains all $f$-exceptional divisors).

We say that $g:X \to Z$ is the ample model of $D$ over $U$ 
if $g$ is a contraction morphism, $Z$ is normal and projective over $U$ 
and there is an ample divisor $H$ over $U$ on $Z$ such that 
$D\sim_{\mathbb{R}, U}g^*H+E$ 
with $B\geq E$ for every $B\in|D/U|_\mathbb{R}$, 
where $E\geq 0$. 
We note that $g$ is not necessarily birational. 
\end{defn}

\begin{defn}
Let $\pi:X\to U$ be a projective morphism from a normal surface to a normal quasi-projective variety. 
Let $(X, \Delta)$ be a $\mathbb{Q}$-factoraial log surface (or~log canonical surface). 
Let $f:X\to Y$ be a birational contraction morphism over $U$ between normal surfaces 
and let $g:X\to S$ be a contraction morphism over $U$ to a normal quasi-projective variety.

We say that $f$ is a weak log canonical model of $K_X+\Delta$ over $U$ 
if $f$ is $(K_X+\Delta)$-non-positive and $K_Y+f_*\Delta$ is nef over $U$.

We say that $f$ is the log canonical model of $K_X+\Delta$ over $U$ 
if $f$ is the ample model of $K_X+\Delta$ over $U$.

We say that $f$ is a minimal model of $K_X+\Delta$ over $U$ 
if $f$ is $(K_X+\Delta)$-negative and $K_Y+f_*\Delta$ is nef over $U$.

We say that $g$ is a Mori fiber space of $K_X+\Delta$ over $U$
if $-(K_X+\Delta)$ is $g$-ample, $\rho(X/S)=1$ and $\dim S<2$. 
It is obvious that if $S$ is a point (resp.~curve), then $\rho(X)=1$ (resp.~$\rho(X)=2$).

We say that $f$ is the result of running the $(K_X+\Delta)$-minimal model program over $U$ 
if $f$ is any sequence of divisorial contractions for the $(K_X+\Delta)$-minimal model program over $U$. 
We emphasize that the result of running the $(K_X+\Delta)$-minimal model program over $U$
is not necessarily a minimal model of $K_X+\Delta$ over $U$ 
or a Mori fiber space of $K_X+\Delta$ over $U$. 
\end{defn} 

\begin{defn}
Suppose that $\pi:X\to U$ is a projective morphism from a normal surface
to a normal quasi-projective variety, $V$ is a finite dimensional affine subspace 
of the real vector space $\WDiv_\mathbb{R}(X)=\WDiv(X)\otimes_\mathbb{Z}\mathbb{R}$, 
which is defined over the rationals, where $\WDiv(X)$ is the group of Weil divisors on $X$, 
and $A\geq 0$ is a $\mathbb{Q}$-divisor on $X$. 
Let $\mathcal{P}$ be a rational polytope in $\WDiv_\mathbb{R}(X)$. 
Then we define
\begin{align*}
V_A&=\{\Delta=A+B\mid B\in V\}, \\
\mathcal{B}_A(V)&=\{\Delta=A+B\in V_A\mid\Delta\in[0, 1] \text{ and } B\geq 0\}, \\
\mathcal{L}_A(V)&=\{\Delta=A+B\in V_A\mid K_X+\Delta \text{ is log canonical and }B\geq0\}, \\
\mathcal{E}_{A, \pi}(V)&=\{\Delta\in\mathcal{P}\mid K_X+\Delta\text{ is pseudo-effective over }U\}, \\
\mathcal{N}_{A, \pi}(V)&=\{\Delta\in\mathcal{P}\mid K_X+\Delta\text{ is nef over }U\}. 
\end{align*}
We can easily check that $\mathcal{B}_A(V)$ is a rational polytope 
and it is well-known that so is $\mathcal{L}_A(V)$. 
If we consider $\mathbb{Q}$-factorial log surfaces (resp.~log canonical surfaces), 
then $\mathcal{P}=\mathcal{B}_A(V)$ (resp, $\mathcal{P}=\mathcal{L}_A(V))$. 
Let $f:X\to Y$ be a birational contraction over $U$ and let $g:X\to Z$ be a contraction morphism over $U$. 
Then we define
\begin{align*}
\mathcal{W}_{f, A, \pi}(V)&=\{\Delta\in\mathcal{E}_{A, \pi}(V)\mid f 
\text{ is a weak log canonical model of } K_X+\Delta\text{ over }U\}, \\
\mathcal{A}_{g, A, \pi}(V)&=\{\Delta\in\mathcal{E}_{A, \pi}(V)\mid g 
\text{ is the ample model of }K_X+\Delta\text{ over }U\}. 
\end{align*}
In addition, let $\mathcal{C}_{g, A, \pi}(V)$ denote the closure of $\mathcal{A}_{g, A, \pi}(V)$. 
We simply write $\mathcal{E}_{A}(V)=\mathcal{E}_{A, \pi}(V)$ 
(resp.~$\mathcal{N}_{A}$, $\mathcal{W}_{f, A}$ and $\mathcal{A}_{g, A}$) when $U$ is a point. 
\end{defn}


\section{Known results of log surfaces}\label{m-sec3}

In this section, we collect some known results of $\mathbb{Q}$-factorial log surfaces 
and log canonical surfaces without proof. 
These results are established by Fujino in characteristic zero 
and generalized by Tanaka in positive characteristic. 

\begin{thm}[Minimal model program for log surfaces]\label{mmp} 
Let $(X, \Delta)$ be a log surface 
and let $\pi:X\to U$ be a projective morphism to a variety. 
Assume that one of the following conditions holds: 
\begin{itemize}
\item[(1)] $X$ is $\mathbb{Q}$-factorial, or
\item[(2)] $(X, \Delta)$ is log canonical. 
\end{itemize}
Then we may run the $(K_X+\Delta)$-minimal model program over $U$. 
In other words, there is a sequence of divisorial contractions
$$
\xymatrix{
(X, \Delta)=(X_0, \Delta_0)\ar[r]^-{\phi_0}  & (X_1, \Delta_1) \ar[r]^-{\phi_1} 
& \cdots \ar[r]^-{\phi_{n-1}} & (X_n, \Delta_n)=(X^+, \Delta^+)
}
$$
such that $(X^+, \Delta^+)$ is a minimal model over $U$ 
or $(X^+, \Delta^+)\to S$ is a Mori fiber space over $U$. 
\end{thm}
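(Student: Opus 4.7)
The plan is to establish the MMP in the classical three-step fashion — cone theorem, contraction theorem, termination — treating the two hypotheses (1) and (2) in parallel, since both are designed precisely to make the surface-level arguments go through uniformly.

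First, I would prove a cone theorem for $(X,\Delta)$ over $U$: the relative cone $\overline{NE}(X/U)$ is generated by its $(K_X+\Delta)$-nonnegative part together with countably many $(K_X+\Delta)$-negative extremal rays, each spanned by the class of a rational curve, with accumulation only toward the hyperplane $(K_X+\Delta)^{\perp}$. In characteristic zero one gets this from Kawamata--Viehweg vanishing in the usual Kawamata bend-and-break package, but in arbitrary characteristic that is unavailable. Following Fujino--Tanaka, I would argue directly on the surface: apply adjunction to a $(K_X+\Delta)$-negative irreducible curve $C$, bound $C^2$ and $C\cdot(K_X+\Delta)$, and use Mumford's intersection pairing on the normal surface $X$ together with Hodge index to produce and isolate the extremal rays.

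Second, for each $(K_X+\Delta)$-negative extremal ray $R\subset\overline{NE}(X/U)$ I would construct a contraction $\phi_R:X\to X_1$ over $U$. On a surface a negative extremal ray is generated by a single curve $C$ with $C^2<0$, so the problem reduces to Artin-style contractibility of $C$ to a normal point. Under hypothesis (1), $\mathbb{Q}$-factoriality of $X$ together with negative-definiteness of the intersection form on the exceptional locus provides the contraction as a morphism of normal $\mathbb{Q}$-factorial surfaces. Under hypothesis (2), log canonicity of $(X,\Delta)$ plus the negativity of $C\cdot(K_X+\Delta)$ again forces contractibility, and one checks that $K_{X_1}+\phi_{R*}\Delta$ remains $\mathbb{R}$-Cartier. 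A separate small argument, based on the negativity lemma applied to the $(K_X+\Delta)$-negative divisorial contraction, shows that in case~(1) the surface $X_1$ is again $\mathbb{Q}$-factorial and in case~(2) the pair $(X_1,\phi_{R*}\Delta)$ is again log canonical, so the hypotheses propagate.

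Third, termination is the easy step on a surface: every contraction $\phi_i$ is divisorial, so $\rho(X_{i+1}/U)=\rho(X_i/U)-1$, and the chain stops after finitely many steps. At the end one has either $K_{X^+}+\Delta^+$ nef over $U$, giving a minimal model, or a remaining $(K_{X^+}+\Delta^+)$-negative extremal ray whose contraction is of fiber type, giving a Mori fiber space $X^+\to S$ over $U$. The serious obstacle throughout is the cone/contraction step in positive characteristic, precisely because the vanishing-theorem machinery of the classical log MMP is not available; this is exactly what Fujino handled in characteristic zero and Tanaka extended to arbitrary characteristic, and is the reason one must insist on either the $\mathbb{Q}$-factorial or the log canonical hypothesis rather than something weaker.
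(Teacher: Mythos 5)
The paper itself gives no argument for this theorem: it is stated in the ``known results'' section and the proof is a citation to Fujino (characteristic zero) and Tanaka (positive characteristic). Your outline reproduces the architecture of exactly those proofs --- cone theorem, contraction theorem, termination by descent of the Picard number --- so in that sense you are on the same route as the sources the paper relies on, and nothing in your sketch is wrong in direction. The one place where your write-up papers over the genuinely hard content is the contraction step: knowing that an extremal curve $C$ has $C^2<0$ and is Artin-contractible to a normal algebraic space does not by itself give a \emph{projective} contraction over $U$, nor does $\mathbb{Q}$-factoriality alone. What is actually needed is a supporting nef divisor for the ray together with a base-point-free/semiampleness statement, and this is precisely where the absence of Kawamata--Viehweg vanishing bites: Fujino gets it in characteristic zero from vanishing for quasi-log surfaces (note that in case (1) the pair need not be log canonical, so even the characteristic-zero case is not the classical lc contraction theorem), and Tanaka gets it in positive characteristic from Keel's semiampleness results. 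You correctly flag this as ``the serious obstacle,'' but you do not supply the idea that resolves it, so as a self-contained proof the proposal has a gap exactly at the step the cited papers were written to fill; as a summary of their strategy it is accurate.
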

\begin{proof}
See \cite[Theorem 3.1] {fujino-surfaces} and \cite[Theorem 6.5]{tanaka}. 
\end{proof}

\begin{rem}\label{output}
We note that $X_i$ is $\mathbb{Q}$-factorial (resp.~$(X_i, \Delta_i)$ is log canonical) 
for every $i$ in Case (1) (resp.~(2)). 
\end{rem}

\begin{thm}[Abundance theorem] 
Let $(X, \Delta)$ be a log surface 
and let $\pi:X\to U$ be a proper surjective morphism to a variety. 
Assume that one of the following conditions holds: 
\begin{itemize}
\item[(1)] $X$ is $\mathbb{Q}$-factorial, or
\item[(2)] $(X, \Delta)$ is log canonical. 
\end{itemize}
If $K_X+\Delta$ is nef over $U$, then $K_X+\Delta$ is semiample over $U$. 
\end{thm}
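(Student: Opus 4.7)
The plan is to reduce the statement to the absolute case (when $U$ is a point) and then proceed by cases on the numerical dimension $\nu := \nu(K_X+\Delta) \in \{0,1,2\}$.

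First, I would pass to the absolute case. After Stein factorization one may assume $\pi$ has connected fibers, so it suffices to check semiampleness on the image. If $\dim\pi(X) = 2$ the divisor is numerically trivial over $U$ and semiampleness follows from the relative basepoint-free theorem. If $\dim\pi(X) = 1$, a general-fiber argument together with descent reduces to the one-dimensional situation on fibers, where nef divisors are automatically semiample. So the essential content lies in the case $U = \mathrm{Spec}\,k$.

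In the absolute case I would split into subcases. If $\nu = 2$ then $K_X+\Delta$ is nef and big, and the log-surface basepoint-free theorem (available in the Fujino--Tanaka framework underlying Theorem \ref{mmp}) yields semiampleness. If $\nu = 0$, the divisor is numerically trivial; perturbing $\Delta$ by a small ample $\mathbb{Q}$-divisor and applying the $\nu = 2$ case produces a family of effective $\mathbb{R}$-divisors $\mathbb{R}$-linearly equivalent to $K_X+\Delta + \varepsilon A$, from which finite-dimensionality of $N^1(X)_{\mathbb{R}}$ and a limiting argument give $\mathbb{R}$-linear triviality of $K_X+\Delta$. If $\nu = 1$, I would produce a fibration $f : X \to C$ onto a smooth curve via a nef reduction map, or equivalently by applying the basepoint-free theorem to $K_X+\Delta+\varepsilon A$ and letting $\varepsilon \to 0$; one then shows $K_X+\Delta$ descends to a nef (hence semiample) $\mathbb{R}$-divisor on $C$.

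The hard part will be the $\nu = 1$ case combined with the absence of any log canonical hypothesis in Case (1) and the failure of Kodaira-type vanishing in positive characteristic. Without log canonicity one cannot invoke Kawamata--Viehweg vanishing directly; in Case (1) one handles this by peeling off the components of $\Delta$ with coefficient one along which the pair is worst and reducing by adjunction to a problem on a curve inside $X$. In positive characteristic, Tanaka replaces vanishing with Keel's semiampleness criterion, which requires a careful analysis of curves on which $K_X+\Delta$ has degree zero. Since this theorem is precisely the abundance statement established in \cite{fujino-surfaces} and \cite{tanaka}, the full technical core would be deferred to those references.
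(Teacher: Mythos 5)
The paper gives no proof of this theorem at all: it is listed in Section~3 as a known result and the ``proof'' is simply a citation of \cite[Theorem 8.1]{fujino-surfaces} and \cite[Corollary 6.10]{tanaka}. Your outline (reduction to the absolute case, case division by numerical dimension, and reliance on vanishing in characteristic zero versus Keel's criterion in positive characteristic) is a fair summary of how those cited proofs go, and since you likewise defer the technical core to the same references, your proposal is consistent with the paper's treatment.
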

\begin{proof}
See \cite[Theorem 8.1]{fujino-surfaces} and \cite[Corollary 6.10]{tanaka}. 
\end{proof}

\begin{prop}[Extremal rational curves] 
Let $(X, \Delta)$ be a log surface 
and let $\pi:X\to U$ be a projective surjective morphism to a variety. 
If $R$ is a $(K_X+\Delta)$-negative extremal ray, 
then there is a rational curve $C$ on $X$ such that $C$ generates $R$ 
and $-(K_X+\Delta)\cdot C\leq 3$. 
\end{prop}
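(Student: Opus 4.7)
The plan is to combine the cone theorem for log surfaces with a bend-and-break style length estimate. First, by the cone theorem that underlies the minimal model program recalled in Theorem \ref{mmp} (due to Fujino and Tanaka), every $(K_X+\Delta)$-negative extremal ray $R$ is spanned by the class of some rational curve $C_0$. The task is then to upgrade this generator to a rational curve $C$ with $-(K_X+\Delta)\cdot C\leq 3$.

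My preferred approach is to apply bend-and-break on the normalization $\nu:\mathbb{P}^1\to C_0\subset X$. A standard infinitesimal computation on a smooth projective surface gives that the space of morphisms $\mathbb{P}^1\to X$ deforming $\nu$ while fixing two points has dimension at least $-K_X\cdot C_0-1$. If $-K_X\cdot C_0>3$, this dimension is positive, and a rigidification argument (valuative criterion applied to a curve in the Hom-scheme passing through a general point) degenerates the family to a cycle of rational curves, at least one component of which still lies in $R$ but has strictly smaller anti-canonical degree. Iterating terminates at a rational curve $C$ in $R$ with $-K_X\cdot C\leq 3$. When $C\not\subset\operatorname{Supp}\Delta$, we have $\Delta\cdot C\geq 0$, so $-(K_X+\Delta)\cdot C\leq -K_X\cdot C\leq 3$.

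To make this rigorous in our setting, I would first pass to the minimal resolution $f:Y\to X$, write $K_Y+\Delta_Y=f^*(K_X+\Delta)+E$ with $\Delta_Y$ the appropriate birational transform and $E$ the exceptional correction, and run the bend-and-break argument on the smooth surface $Y$ using Mori's or Keel's variant valid in arbitrary characteristic. The resulting rational curve on $Y$ pushes to a rational curve in the extremal ray $R$ on $X$ with the desired numerical bound.

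The main obstacle will be the characteristic-$p$ version of bend-and-break (requiring a Frobenius-twist trick) together with the case $C\subset\operatorname{Supp}\Delta$. In the latter case $\Delta\cdot C$ may be negative; however, since $R$ is extremal one has $C^2\leq 0$, and writing $\Delta=aC+\Delta'$ with $a\in(0,1]$ and $C\not\subset\operatorname{Supp}\Delta'$ gives $\Delta\cdot C=aC^2+\Delta'\cdot C$, which one estimates by combining $C^2\leq 0$, $a\leq 1$, and $\Delta'\cdot C\geq 0$ to conclude $-(K_X+\Delta)\cdot C\leq -K_X\cdot C\leq 3$. All of these ingredients are contained in the Fujino--Tanaka cone theorem for log surfaces, so in practice I expect the proof to be a direct citation of the length statement proved there.
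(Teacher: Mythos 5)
The paper gives no argument for this proposition: its entire proof is ``See \cite{fujino-surfaces} and \cite{tanaka}'', which is exactly where your final sentence lands, so as a matter of what is actually written you and the paper end up in the same place. Taken as a standalone argument, however, your sketch has two genuine errors. First, the claim that extremality of $R$ forces $C^2\le 0$ is false: on $X=\mathbb{P}^2$ the unique extremal ray is spanned by a line, which has self-intersection $1$; more generally the fiber class of a Mori fiber space over a point can have positive self-intersection. Second, and more seriously, your estimate in the case $C\subset\Supp\Delta$ goes the wrong way. From $C^2\le 0$, $a\le 1$ and $\Delta'\cdot C\ge 0$ you can only conclude $\Delta\cdot C\ge aC^2$, which may well be negative; and if $\Delta\cdot C<0$ then $-(K_X+\Delta)\cdot C=-K_X\cdot C-\Delta\cdot C$ is \emph{larger} than $-K_X\cdot C$, so the bound $-K_X\cdot C\le 3$ does not transfer. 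This is precisely the delicate case of the proposition.

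The standard repair (and the mechanism in Fujino's Proposition 3.8) is adjunction rather than positivity of $\Delta\cdot C$: for a curve with $C^2<0$ one estimates $(K_X+\Delta)\cdot C\ge (K_X+aC)\cdot C+\Delta'\cdot C\ge (K_X+C)\cdot C\ge 2p_a(C)-2\ge -2$, the middle inequality using $(a-1)C^2\ge 0$, and all of this carried out on the minimal resolution because $X$ is only normal and $(X,\Delta)$ need not be log canonical. The same issue infects your bend-and-break step: on the minimal resolution $f:Y\to X$ one has $K_Y+\Delta_Y=f^*(K_X+\Delta)$ with $\Delta_Y\ge 0$, so a bound on $-K_Y\cdot C'$ only yields a bound on $-f^*(K_X+\Delta)\cdot C'$ after controlling $\Delta_Y\cdot C'$, which is again the problematic case. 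So either replace the sketch by the adjunction case analysis, or simply keep the citation, which is all the paper itself does.
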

\begin{proof}
See \cite[Proposition 3.8]{fujino-surfaces} and \cite[Proposition 3.39]{tanaka}. 
\end{proof}

The following lemmas are proved by using the proposition above. 
For details, see \cite[Proposition 3.2]{birkar}. 

\begin{lem}\label{polytope1}
Let $\pi:X\to U$ be a proper surjective morphism from a $\mathbb{Q}$-factorial surface to a variety. 
Fix a $\mathbb{Q}$-divisor $C\geq0$.
Let $V$ be a finite dimensional affine subspace of $\WDiv_\mathbb{R}(X)$, 
which is defined over the rationals. 
Then 
\[\mathcal{N}_{C, \pi}(V)=\{\Delta\in\mathcal{B}_C(V)\mid K_X+\Delta\text{ is nef over }U\}\]
is a rational polytope. 
\end{lem}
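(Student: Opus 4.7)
The plan is to cut out $\mathcal{N}_{C,\pi}(V)$ from the rational polytope $\mathcal{B}_C(V)$ by finitely many rational linear inequalities, so that it becomes a finite intersection of a rational polytope with rational half-spaces and hence itself a rational polytope. The set $\mathcal{B}_C(V)$ is already a rational polytope, as noted just after its definition, and is in particular compact. For each curve $\ell$ in a fibre of $\pi$, the condition $(K_X+\Delta)\cdot\ell\geq 0$ is affine linear in $\Delta$, so $\mathcal{N}_{C,\pi}(V)$ is a closed convex subset of $\mathcal{B}_C(V)$; the nontrivial content is to reduce the a priori infinite family of test curves to a finite rational one.

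The key step is to produce finitely many rational curves $\ell_{1},\ldots,\ell_{m}$ on $X$, each contained in a fibre of $\pi$, such that
\[\mathcal{N}_{C,\pi}(V)=\mathcal{B}_C(V)\cap\bigcap_{i=1}^{m}\{\Delta\in V\mid (K_X+\Delta)\cdot\ell_{i}\geq 0\}.\]
Granting this, each half-space on the right is rational because $V$ is defined over the rationals and the intersection numbers $K_X\cdot\ell_i$ and $D\cdot\ell_i$ (for $D$ running over a rational basis of the direction of $V$) are rational; therefore the intersection is a rational polytope.

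To produce such curves I would fix a $\pi$-ample $\mathbb{Q}$-divisor $H$ on $X$ and proceed as follows. For any $\Delta\in\mathcal{B}_C(V)$ and any $(K_X+\Delta)$-negative extremal ray $R$ of $\NE(X/U)$, the extremal rational curves proposition above supplies a rational generator $\ell_R$ of $R$ with $0<-(K_X+\Delta)\cdot\ell_R\leq 3$. Since $\mathcal{B}_C(V)$ is compact and $\Delta\mapsto K_X+\Delta$ is affine, the image of $\mathcal{B}_C(V)$ in $N^{1}(X/U)_{\mathbb{R}}$ is compact; combining this with the length bound and with local finiteness of the $(K_X+\Delta_j)$-negative part of $\NE(X/U)$ at each rational vertex $\Delta_j$ of $\mathcal{B}_C(V)$ (the cone theorem for surfaces), one forces $H\cdot\ell_R$ to be uniformly bounded. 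Any bounded region of $N_{1}(X/U)_{\mathbb{R}}$ contains only finitely many classes that span extremal rays, so finitely many such curves suffice.

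The main obstacle is precisely this finiteness of relevant extremal rays. The length bound $-(K_X+\Delta)\cdot\ell_R\leq 3$ does not directly control $H\cdot\ell_R$, and one must marry the compactness of $\mathcal{B}_C(V)$ with local finiteness of the Mori cone near each vertex $\Delta_j$; this marriage is exactly the content of \cite[Proposition 3.2]{birkar} cited in the excerpt. Once the finiteness is in hand, the rationality and polytope structure follow formally.
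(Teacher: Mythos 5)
Your proposal is correct and follows essentially the same route as the paper, which gives no independent argument for this lemma but simply invokes the length bound on extremal rational curves and refers to \cite[Proposition 3.2]{birkar} for the finiteness of the relevant rational hyperplanes; you identify the same key input and defer the same key step to the same source. (Your heuristic for the finiteness --- a uniform bound on $H\cdot\ell_R$ from compactness --- is not quite how Birkar's proof runs, which is a local analysis near a nef point combined with induction on the dimension of the polytope, but since you explicitly outsource that step this does not affect the verdict.)
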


\begin{lem}\label{polytope2}
Let $\pi:X\to U$ be a proper surjective morphism from a normal surface to a variety. 
Fix a $\mathbb{Q}$-divisor $C\geq0$.
Let $V$ be a finite dimensional affine subspace of $\WDiv_\mathbb{R}(X)$, 
which is defined over the rationals. 
Then 
\[\mathcal{N}_{C, \pi}(V)=\{\Delta\in\mathcal{L}_C(V)\mid K_X+\Delta\text{ is nef over }U\}\]
is a rational polytope. 
\end{lem}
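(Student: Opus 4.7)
The plan is to mimic the proof of Lemma \ref{polytope1} but without the crutch of $\mathbb{Q}$-factoriality, replacing it with the log canonical hypothesis so that the extremal rational curve proposition still applies. The ambient set $\mathcal{L}_C(V)$ is already known to be a compact rational polytope, so the problem reduces to showing that the nef locus inside $\mathcal{L}_C(V)$ is cut out by finitely many rational linear inequalities, equivalently, that there exist finitely many irreducible curves $C_1,\dots,C_n$ on $X$, each contained in a fiber of $\pi$, such that for every $\Delta\in\mathcal{L}_C(V)$,
\[
K_X+\Delta\text{ is nef over }U\iff (K_X+\Delta)\cdot C_i\geq 0\text{ for all }i.
\]

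To produce such a finite set of test curves, I would first fix an ample $\mathbb{R}$-divisor $A$ on $X$ over $U$. Since intersection with $A$ is an affine linear function on $V$ and $\mathcal{L}_C(V)$ is compact, there is a uniform bound $M$ with $(K_X+\Delta)\cdot A\leq M$ for every $\Delta\in\mathcal{L}_C(V)$. If $C$ generates a $(K_X+\Delta)$-negative extremal ray over $U$ for some $\Delta\in\mathcal{L}_C(V)$, then by the extremal rational curve proposition $-(K_X+\Delta)\cdot C\leq 3$. Combined with the bound on $(K_X+\Delta)\cdot A$ and a small perturbation $(K_X+\Delta+\varepsilon A)$ (still log canonical for small $\varepsilon>0$, after shrinking $V$ or a standard enlargement argument), one obtains a uniform bound on $A\cdot C$ for any such extremal curve.

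Next I would subdivide $\mathcal{L}_C(V)$ into finitely many rational sub-simplices. At each vertex $\Delta_0$, either $K_X+\Delta_0$ is nef, in which case nothing needs to be added, or the cone theorem for log canonical surfaces (together with the extremal rational curve bound and the cited references \cite{fujino-surfaces}, \cite{tanaka}) supplies finitely many $(K_X+\Delta_0)$-negative extremal rays, each generated by a rational curve of bounded $A$-degree. Taking the union of these finitely many curves over all vertices gives a candidate finite set $\{C_1,\dots,C_n\}$. A convexity argument inside each sub-simplex, using that the failure of nefness propagates continuously and that any ``new'' negative extremal ray appearing in the interior of a simplex must already be negative at one of its vertices (by linearity of $\Delta\mapsto(K_X+\Delta)\cdot C$ and the uniform $A$-degree bound), shows these curves already detect all nefness failures.

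I expect the main obstacle to be the finiteness step: verifying that the collection of extremal curves that can become $(K_X+\Delta)$-negative as $\Delta$ ranges over $\mathcal{L}_C(V)$ is genuinely finite, rather than merely locally finite at each point. Without $\mathbb{Q}$-factoriality, we cannot pass to a log resolution and quote Lemma \ref{polytope1} cleanly, because the pulled-back boundary may acquire negative exceptional coefficients and fall outside of any $\mathcal{B}_A$-type polytope. The argument therefore has to be carried out on $X$ itself, relying on the log canonical hypothesis to keep the cone theorem and the bound $-(K_X+\Delta)\cdot C\leq 3$ available, exactly as in \cite[Proposition 3.2]{birkar}.
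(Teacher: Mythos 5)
Your reduction of the lemma to producing finitely many test curves is the right frame, and it is indeed what the paper delegates to \cite[Proposition 3.2]{birkar}; but the finiteness step, which you rightly flag as the crux, is justified by two claims that are false in general, and the subdivision argument does not close without them. First, there is no uniform bound on $A\cdot \Gamma$ as $\Gamma$ ranges over all curves generating a $(K_X+\Delta)$-negative extremal ray for some $\Delta$ in the polytope: take $X$ the blow-up of $\mathbb{P}^2$ at nine very general points, $A$ very ample, $C=0$ and $V$ the line spanned by $A$; for $\Delta=0$ the $K_X$-negative extremal rays are generated by the infinitely many $(-1)$-curves, whose $A$-degrees are unbounded, even though $K_X+tA$ is nef for $t$ large so that $\mathcal{N}_{0,\pi}(V)\neq\emptyset$. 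The perturbation by $\varepsilon A$ only bounds the degree of the rays that remain negative for $K_X+\Delta+\varepsilon A$; the remaining rays still produce walls $\{(K_X+\cdot)\cdot\Gamma=0\}$ crossing $\mathcal{L}_C(V)$ (in the example they accumulate at $\Delta=0$). Second, and for the same reason, the cone theorem does not supply finitely many $(K_X+\Delta_0)$-negative extremal rays at an arbitrary vertex $\Delta_0$: the rays are discrete only away from the hyperplane $(K_X+\Delta_0)\cdot z=0$ and may be infinite in number. So a wall can cut the interior of one of your subsimplices without the corresponding curve being extremal, or of bounded degree, at any vertex, and your candidate set $\{C_1,\dots,C_n\}$ need not detect all failures of nefness.

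The finiteness that is actually true, and that Birkar's proof establishes, is local around the \emph{nef} locus rather than global on $\mathcal{L}_C(V)$: fix $\Delta_0\in\mathcal{N}_{C,\pi}(V)$ and $\delta>0$; for $\Delta\in\mathcal{L}_C(V)$ in a sufficiently small ball around $\Delta_0$ the class $\Delta-\Delta_0+\delta A$ is ample, so $K_X+\Delta+\delta A=(K_X+\Delta_0)+(\Delta-\Delta_0+\delta A)$ is positive on every curve contracted by $\pi$; hence any extremal curve $\Gamma$ with $(K_X+\Delta)\cdot\Gamma<0$ satisfies $\delta A\cdot\Gamma<-(K_X+\Delta)\cdot\Gamma\leq3$, i.e.\ $A\cdot\Gamma<3/\delta$. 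This bounds the relevant curves in a bounded family, so only finitely many rational walls meet a neighbourhood of each point of $\mathcal{N}_{C,\pi}(V)$; since $\mathcal{N}_{C,\pi}(V)$ is closed, convex and contained in the compact rational polytope $\mathcal{L}_C(V)$, being locally rational-polyhedral at each of its points makes it a rational polytope. Replacing your global degree bound and vertex-checking by this local argument (or by the induction on $\dim\mathcal{L}_C(V)$ in \cite[Proposition 3.2]{birkar}) repairs the proof; your observation that one cannot pass to a log resolution and quote Lemma \ref{polytope1} is correct and is precisely why the argument must be run on $X$ itself using the log canonical hypothesis to invoke the bound $-(K_X+\Delta)\cdot\Gamma\leq3$.
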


We close this section with the following theorem. 
However we use this theorem only in Section 7. 

\begin{thm}\label{oo}
Let $(X, \Delta)$ be a projective log surface 
such that $\lfloor \Delta\rfloor=0$ and $-(K_X+\Delta)$ is nef and big. 
Then $\Pic(X)$ is a free abelian group of finite rank. 
In particular, there is no non-trivial torsion line bundle on $X$. 
\end{thm}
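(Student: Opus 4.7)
The plan is to pass to a smooth resolution $f\colon Y\to X$, show that $Y$ must be a rational surface, and then descend via injectivity of pullback on Picard groups.

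First I would observe that $X$ has negative Kodaira dimension. Since $-(K_X+\Delta)$ is big and $\Delta\ge 0$, pulling back to a smooth resolution $Y$ shows that $-f^*(K_X+\Delta)$ is big on $Y$. Since any pseudo-effectivity of $K_Y$ would push forward to pseudo-effectivity of $K_X$, this forces $p_g(Y)=0$. I would then argue $q(Y)=0$ by setting up a klt log del Pezzo pair on $Y$: writing $K_Y+\widetilde{\Delta}=f^*(K_X+\Delta)+\sum a_iE_i$ for the strict transform and exceptional divisors, the condition $\lfloor\Delta\rfloor=0$ together with the effective slack supplied by bigness of $-(K_X+\Delta)$ (via Kodaira's lemma $-(K_X+\Delta)\sim_{\mathbb{R}}A+F$ with $A$ ample and $F\ge 0$) allows one to replace $\Delta$ by $\Delta+\delta F$ for small $\delta>0$ so that $-(K_X+\Delta')$ is ample with $\lfloor\Delta'\rfloor=0$, and then to absorb the negative-discrepancy $E_i$ into an SNC boundary $B$ with $\lfloor B\rfloor=0$ and $-(K_Y+B)$ nef and big. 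Kawamata--Viehweg vanishing in characteristic zero, or Tanaka's vanishing for log del Pezzo surfaces in positive characteristic, then yields $H^1(Y,\mathcal{O}_Y)=0$. Thus $p_g=q=0$, and Castelnuovo's criterion (in char $p$ the Enriques--Bombieri--Mumford classification) shows $Y$ is rational, so $\Pic(Y)$ is a free abelian group of finite rank.

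Second I would descend to $X$. Since $X$ is normal and $f$ birational, $f_*\mathcal{O}_Y=\mathcal{O}_X$, and the projection formula gives $f_*f^*L=L$ for every $L\in\Pic(X)$. Hence $f^*\colon\Pic(X)\to\Pic(Y)$ is injective, so $\Pic(X)$ embeds as a subgroup of the free abelian group $\Pic(Y)$ of finite rank, and is therefore itself a free abelian group of finite rank. In particular no nontrivial torsion line bundle exists on $X$.

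The main obstacle is the vanishing step establishing $q(Y)=0$ in positive characteristic, since Kodaira vanishing can fail for smooth surfaces in characteristic $p$. Producing the klt log del Pezzo structure on $Y$ requires combining $\lfloor\Delta\rfloor=0$ with the bigness of $-(K_X+\Delta)$ delicately, and then verifying that the perturbation and resolution do not ruin the SNC or klt conditions so that a specialized vanishing (such as Tanaka's for log del Pezzo surfaces) actually applies.
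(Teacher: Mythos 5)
The paper does not actually prove this statement: its ``proof'' is the citation \cite[Theorem 1.5]{oo} to Ohta--Okawa, so you are competing with that reference rather than with an argument in the text. Your descent step is fine ($f_*\mathcal{O}_Y=\mathcal{O}_X$ plus the projection formula does give injectivity of $f^*$ on Picard groups), and so is the observation that $K_Y$ is not pseudo-effective. But the core of your plan --- that $Y$ is rational --- is false under the stated hypotheses, so the proof has a genuine gap.

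The hypothesis $\lfloor\Delta\rfloor=0$ constrains only the coefficients of $\Delta$; the pair $(X,\Delta)$ is not assumed klt, or even log canonical, so the discrepancies of the $f$-exceptional divisors can be $\le -1$ and cannot be absorbed into a boundary $B$ with $\lfloor B\rfloor=0$. A concrete counterexample to your intermediate claim: let $C$ be an elliptic curve, $A$ an ample line bundle on $C$ of degree $e\ge 3$, and let $X$ be the projective cone over $(C,A)$, so $\Delta=0$ and $\lfloor\Delta\rfloor=0$. Here $K_X$ is Cartier (since $K_C\sim 0$), the minimal resolution is $S=\mathbb{P}(\mathcal{O}_C\oplus A^{-1})$ with $K_S=f^*K_X-C_0$ for the negative section $C_0$, and one checks $(-K_X)^2=e>0$ and $-K_X\cdot\Gamma\ge 0$ for every curve $\Gamma$, so $-K_X$ is nef and big. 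Yet $q(S)=1$ and $S$ is an irrational ruled surface --- this is exactly the class of surfaces in the title of \cite{oo}. Consequently no klt log del Pezzo structure exists on $Y$, $H^1(Y,\mathcal{O}_Y)\neq 0$, and your vanishing step cannot succeed; the difficulty is not the characteristic-$p$ vanishing theorem but the geometry. (As a side remark, even where your argument applies, Castelnuovo's criterion requires $q=P_2=0$, not $p_g=q=0$; that part is repairable via $\kappa(Y)=-\infty$ and the classification.)

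The substantive content of the theorem is precisely the irrational case: $f^*$ embeds $\Pic(X)$ into $\Pic(Y)$, but $\Pic(Y)$ contains $\Pic^0(C)$, a divisible group with abundant torsion, so one must show that the image of $\Pic(X)$ meets $\Pic^0(Y)$ trivially. That is what Ohta--Okawa actually prove, by analyzing which line bundles on the ruled surface descend across the contraction of $C_0$; it does not follow from rationality of $Y$.
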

\begin{proof}
See \cite[Theorem 1.5]{oo}. 
\end{proof}


\section{Some results of \cite{bchm} for log surfaces}\label{m-sec4}

We start with a crucial definition 
to treat $\mathbb{Q}$-factorial surfaces and log canonical surfaces simultaneously. 

\begin{defn}[special]\label{special}
Let $\pi:X\to U$ be a projective morphism from a normal surface to a normal quasi-projective variety. 
Let $f:X\to Y$ be a birational contraction between normal surfaces over $U$.

Then we say that $f$ is special if there are two Zariski open sets 
$U_X\subset X$ and $U_Y\subset Y$ such that $f$ induces $U_X\simeq U_Y$ 
and, $U_X$ and $U_Y$ contain all non-$\mathbb{Q}$-factorial singularities respectively. 
In particular, the exceptional locus $\Exc(f)$ does not pass through 
any non-$\mathbb{Q}$-factorial singularities. 
\end{defn}

We can show the Sarkisov program on log canonical surfaces 
similarly to on $\mathbb{Q}$-factorial surfaces thanks to the following theorem and easy lemma. 

\begin{thm}\label{special1}
Let $f:X\to Y$ be a projective birational morphism between normal surfaces. 
Let $(X, \Delta)$ be log canonical. 
If $-(K_X+\Delta)$ is $f$-ample, 
then every $f$-exceptional curve is $\mathbb{R}$-Cartier. 
In particular, all divisorial contractions are special. 
\end{thm}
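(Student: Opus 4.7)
The statement is local on $Y$: fix a closed point $p\in Y$ in the image of $\Exc(f)$, replace $Y$ by a small neighbourhood of $p$, and let $E_1,\ldots,E_n$ be the $f$-exceptional curves over $p$ (forming a connected configuration). Since $\mathbb{R}$-Cartierness coincides with $\mathbb{Q}$-Cartierness for an integral divisor on a normal surface, it suffices to show that each $E_i$ is $\mathbb{Q}$-Cartier at $p$.

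My plan is to pass to a log resolution $\mu\colon W\to X$ of $(X,\Delta+\sum_i E_i)$, with $\tilde E_i$ the strict transform of $E_i$ and $G_1,\ldots,G_m$ the $\mu$-exceptional prime divisors, and to use Mumford's pullback on the smooth surface $W$. Because $f\circ\mu$ contracts $\{\tilde E_i\}\cup\{G_j\}$ to the single point $p$, this combined configuration has negative definite intersection matrix, so for each $i$ there is a unique $\mathbb{Q}$-divisor
\[
\mu^{\sharp}E_i \;=\; \tilde E_i + \sum_{j} c_{ij}\,G_j, \qquad c_{ij}\in\mathbb{Q},
\]
characterised by $\mu^{\sharp}E_i\cdot G_k=0$ for every $k$. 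The curve $E_i$ is then $\mathbb{Q}$-Cartier at $p$ precisely when some positive integer multiple of $\mu^{\sharp}E_i$ is the honest Cartier pullback of an integral multiple of $E_i$ from $X$, which is the criterion I would verify.

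The log canonical hypothesis enters via the discrepancy formula
\[
K_W + \mu_{\ast}^{-1}\Delta \;=\; \mu^{\ast}(K_X+\Delta) + \sum_{j} a_j\,G_j, \qquad a_j \ge -1,
\]
while the $f$-ampleness of $-(K_X+\Delta)$ forces $-\mu^{\ast}(K_X+\Delta)\cdot \tilde E_i > 0$ and $-\mu^{\ast}(K_X+\Delta)\cdot G_j \ge 0$, pinning down all intersection numbers of $K_W + \mu_{\ast}^{-1}\Delta$ with the components of the exceptional configuration on $W$. The main obstacle, as I see it, is to convert these numerical constraints into the claimed $\mathbb{Q}$-Cartierness: the bound $a_j\ge-1$ together with negative definiteness restricts the dual graph of $\{\tilde E_i\}\cup\{G_j\}$ to those shapes appearing in the classification of log canonical surface singularities, and in each such shape a direct local class-group computation shows that every component descending to $X$ is $\mathbb{Q}$-Cartier.

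For the ``in particular'' clause, a divisorial contraction in the $(K_X+\Delta)$-minimal model program contracts a single irreducible exceptional curve $E$ with $-(K_X+\Delta)\cdot E>0$, so the main statement applies and $E$ is $\mathbb{R}$-Cartier. Combined with $\rho(X/Y)=1$ and the standard fact that a log canonical surface singularity is $\mathbb{Q}$-factorial if and only if it is rational, the local class-group comparison across $f$ (using that $E$ generates the relative class group over $p$) identifies the non-$\mathbb{Q}$-factorial loci of $X$ along $E$ with those of $Y$ at $f(E)$, and one extracts open sets $U_X\subset X$ and $U_Y\subset Y$ isomorphic under $f$ and containing all non-$\mathbb{Q}$-factorial singularities, as required by Definition~\ref{special}.
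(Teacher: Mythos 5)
The paper does not actually prove this statement; it cites \cite[Theorem 4.1]{fujino-sing}, whose real content is that $X$ (and $Y$) have only rational, hence $\mathbb{Q}$-factorial, singularities along $\Exc(f)$ (resp.\ at $f(\Exc(f))$). Your outline stops exactly where the work begins. The sentence ``in each such shape a direct local class-group computation shows that every component descending to $X$ is $\mathbb{Q}$-Cartier'' \emph{is} the theorem, and as you have set things up the $f$-ampleness of $-(K_X+\Delta)$ plays no role in that step: you use it only to record intersection numbers and then appeal to the classification of log canonical surface singularities alone. That cannot work. Log canonicity by itself does not make curves through a singular point $\mathbb{Q}$-Cartier: simple elliptic and cusp singularities are lc but not rational, their local class groups contain non-torsion elements, and a curve through such a point is in general not $\mathbb{Q}$-Cartier. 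The substance of the theorem is precisely that the relative anti-ampleness excludes these configurations from the exceptional locus, and you never rule them out. (Moreover, the configuration $\{\tilde E_i\}\cup\{G_j\}$ over $p\in Y$ need not be the resolution graph of an lc singularity at all: the bound $a_j\ge -1$ applies only to the $\mu$-exceptional $G_j$, and $(Y,f_*\Delta)$ is not assumed lc, so even the reduction to ``shapes in the classification'' is unjustified.)

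The ``in particular'' clause has a second gap. For $f$ to be special in the sense of Definition \ref{special}, $f|_{U_X}$ must be an isomorphism onto $U_Y$, which forces $U_X\cap E=\emptyset$; hence you need $X$ to be $\mathbb{Q}$-factorial at \emph{every} point of $E$ and $Y$ to be $\mathbb{Q}$-factorial at $f(E)$. That is strictly stronger than ``$E$ is $\mathbb{R}$-Cartier,'' and your ``local class-group comparison across $f$'' does not supply it. Be careful also with the ``standard fact'' you invoke: over $\overline{\mathbb{F}_p}$ (the paper works over an arbitrary algebraically closed field) the cone over an elliptic curve is $\mathbb{Q}$-factorial but not rational, so ``$\mathbb{Q}$-factorial iff rational'' fails for lc surface singularities; only the implication ``rational $\Rightarrow$ $\mathbb{Q}$-factorial'' is available, and it is the rationality of the singularities along $\Exc(f)$ and at $f(\Exc(f))$ that Fujino's theorem establishes and that the paper really needs here.
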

\begin{proof}
See \cite[Theorem 4.1]{fujino-sing}. 
\end{proof}

\begin{lem}\label{special2}
Let $\pi:X\to U$ be a projective morphism from a normal surface to a normal quasi-projective variety. 
Let $(X, \Delta)$ be a log canoincal surface
and  let $\phi:X\to Y$ be a minimal model of $K_X+\Delta$ over $U$. 
Then $\phi$ is special. 
\end{lem}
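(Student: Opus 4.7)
The plan is to reduce the statement to the single-step case, which is exactly Theorem \ref{special1}, and then argue that a composition of special birational contractions is special.

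First I would use Theorem \ref{mmp} and the definition of a minimal model to factor $\phi$ as a sequence
\[
(X,\Delta)=(X_0,\Delta_0)\xrightarrow{f_0}(X_1,\Delta_1)\xrightarrow{f_1}\cdots\xrightarrow{f_{n-1}}(X_n,\Delta_n)=(Y,\phi_*\Delta),
\]
where each $f_i$ is a divisorial contraction of a $(K_{X_i}+\Delta_i)$-negative extremal ray. By Remark \ref{output}, every intermediate $(X_i,\Delta_i)$ is again log canonical. Since $-(K_{X_i}+\Delta_i)$ is $f_i$-ample, Theorem \ref{special1} applies to each step and yields that $f_i$ is special: there exist open sets $V_i\subset X_i$ and $W_{i+1}\subset X_{i+1}$, containing the non-$\mathbb{Q}$-factorial loci $\Sigma_i$ and $\Sigma_{i+1}$ respectively, with $f_i|_{V_i}\colon V_i\to W_{i+1}$ an isomorphism.

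Next I would observe that for each $i$, the map $f_i$ sends $\Sigma_i$ bijectively to $\Sigma_{i+1}$. Indeed, $\Sigma_i\subset V_i$ and $\Sigma_{i+1}\subset W_{i+1}$ by speciality, and the isomorphism $V_i\simeq W_{i+1}$ is local and hence preserves $\mathbb{Q}$-factoriality, so $\Sigma_i\cap V_i\to\Sigma_{i+1}\cap W_{i+1}$ is a bijection; since the $\Sigma$'s are already contained in the $V_i,W_{i+1}$'s this gives $\Sigma_{i+1}=f_i(\Sigma_i)$. In particular the image in $X_{i+1}$ of any curve contracted by $f_i$ is $\mathbb{Q}$-factorial, which is the crucial compatibility that lets us iterate.

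Finally I would verify that a composition of two special birational contractions is special. For special $f\colon X\to Y$ and $g\colon Y\to Z$ with iso-loci $(V_f,W_f)$ and $(V_g,W_g)$, for every point $q\in\Sigma_X$ one has $f(q)\in\Sigma_Y\subset W_f\cap V_g$, so shrinking a neighborhood of $q$ inside $V_f$ until its $f$-image lies in $V_g$ produces an open neighborhood mapped isomorphically by $g\circ f$ onto an open neighborhood of $(g\circ f)(q)$. Taking the union $U_X$ of such neighborhoods over $q\in\Sigma_X$ and setting $U_Z:=(g\circ f)(U_X)$ gives opens through which $g\circ f$ is an isomorphism and which contain $\Sigma_X$ and $\Sigma_Z=(g\circ f)(\Sigma_X)$ respectively. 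Applying this inductively to the factorization of $\phi$ finishes the proof.

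I expect no serious obstacle: the content is entirely in Theorem \ref{special1}, and the only point that requires any care is the bookkeeping that the non-$\mathbb{Q}$-factorial locus propagates strictly through the iso-locus at each MMP step, so that the iso-loci can be intersected along the tower without ever losing a non-$\mathbb{Q}$-factorial point.
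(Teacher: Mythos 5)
Your proposal is correct and follows essentially the same route as the paper: the paper's one-line proof takes $U_X=X\setminus\Exc(\phi)$ and $U_Y=Y\setminus\phi(\Exc(\phi))$ and invokes Theorem \ref{special1}, which implicitly contains exactly your step-by-step factorization into divisorial contractions and the observation that speciality (and the non-$\mathbb{Q}$-factorial locus) propagates along the tower. Your version just makes that bookkeeping explicit.
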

\begin{proof}
If we put $U_X=X\setminus \Exc(\phi)$ and $U_Y=Y\setminus \phi(\Exc(\phi))$, 
the conditions above are satisfied by Theorem \ref{special1}. 
\end{proof}

The following lemma also plays a crucial role in this section. 
 
\begin{lem}\label{q-fac1}
Let $(X, \Delta)$ be a $\mathbb{Q}$-factorial log surface such that $\Delta\in[0, 1)$. 
If $f:X\to Y$ is $(K_X+\Delta)$-non-positive, 
then $Y$ is also $\mathbb{Q}$-factorial. 
\end{lem}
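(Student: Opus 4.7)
The plan is to reduce to the case where $f$ contracts a single irreducible curve, and then to perturb $\Delta$ slightly so that the contraction becomes a step of a $(K_X + \Delta'')$-MMP over $Y$; Theorem \ref{mmp} together with Remark \ref{output} will then finish the argument.

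For the reduction, write the $f$-exceptional curves as $E_1, \ldots, E_n$ and let
\[
(K_X + \Delta) - f^* f_*(K_X + \Delta) = \sum_{i} a_i E_i, \quad a_i \geq 0.
\]
The weighted sum $\sum_j a_j (K_X + \Delta) \cdot E_j = (\sum_i a_i E_i)^2 \leq 0$, by negative-definiteness of the exceptional intersection matrix, produces some $E_1$ with $(K_X + \Delta) \cdot E_1 \leq 0$. By Artin's contractibility criterion I contract $E_1$ alone to obtain $h \colon X \to X_1$, and a direct intersection computation then shows that the $h$-exceptional correction for $K_X + \Delta$ is a non-negative multiple of $E_1$, so $h$ is $(K_X + \Delta)$-non-positive. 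Assuming the single-curve case, $X_1$ is $\mathbb{Q}$-factorial, the residual morphism $g \colon X_1 \to Y$ is $(K_{X_1} + h_* \Delta)$-non-positive with $h_* \Delta \in [0, 1)$, and induction on the number of exceptional curves completes the reduction.

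For the base case of a single contracted curve $E$, adjunction will force $E \cong \mathbb{P}^1$. Writing $\Delta = \delta E + \Delta'$ with $\delta \in [0, 1)$ and $E \not\subset \Supp(\Delta')$, and combining $(K_X + E) \cdot E = 2 p_a(E) - 2$ with $(K_X + \Delta) \cdot E \leq 0$, one obtains
\[
2 p_a(E) \leq 2 + (1 - \delta) E^2 - \Delta' \cdot E < 2,
\]
so $p_a(E) = 0$. The hypothesis $\Delta \in [0, 1)$ is used precisely to keep $1 - \delta > 0$ here.

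To conclude, perturb by $\Delta'' = \Delta + \epsilon E$ for $\epsilon > 0$ small enough that $\delta + \epsilon < 1$. Then $(X, \Delta'')$ is a $\mathbb{Q}$-factorial log surface with $\Delta'' \in [0, 1)$, the morphism $f$ remains $(K_X + \Delta'')$-non-positive, and $(K_X + \Delta'') \cdot E = (K_X + \Delta) \cdot E + \epsilon E^2 < 0$ strictly since $E^2 < 0$. Because $\rho(X/Y) = 1$, the morphism $f$ is the divisorial contraction of the unique $(K_X + \Delta'')$-negative extremal ray of $\NE(X/Y)$, so Theorem \ref{mmp} and Remark \ref{output} give that $Y$ is $\mathbb{Q}$-factorial. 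The delicate point of the plan is the weighted-average argument producing a valid curve to contract in the inductive reduction; the remainder is mechanical.
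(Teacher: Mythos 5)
Your base case is, in essence, the paper's own proof: the paper perturbs to $\Delta+\epsilon E$ with $E=\Exc(f)$ the \emph{whole} exceptional locus, runs the $(K_X+\Delta+\epsilon E)$-MMP over $Y$, and uses the negativity lemma to see that the resulting $g:X\to Z$ contracts exactly $\Exc(f)$, so $Z\simeq Y$ is $\mathbb{Q}$-factorial by Remark \ref{output}. Your single-curve argument is the special case in which the negativity lemma is unnecessary because $\rho(X/Y)=1$ leaves only one ray to contract, and that part is fine. (The adjunction computation showing $E\simeq\mathbb{P}^1$ is never used afterwards, and as written it ignores the correction terms in adjunction for a Weil divisor on a singular surface; it should be deleted.) No reduction to a single curve is actually needed.

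The genuine gap is in that reduction. You produce $h:X\to X_1$ by contracting $E_1$ alone ``by Artin's contractibility criterion.'' Artin's theorem yields the contraction only as a normal complete algebraic space; it does not give that $X_1$ is a (quasi-)projective variety, and such contractions of a single negative curve need not be projective. Yet everything you then do with $h$ requires $X_1$ to sit inside the paper's framework: calling $h$ ``$(K_X+\Delta)$-non-positive'' requires $h_*(K_X+\Delta)$ to be $\mathbb{R}$-Cartier on $X_1$, which already presupposes the kind of $\mathbb{Q}$-factoriality you are trying to prove (your ``direct intersection computation'' only determines Mumford's numerical pullback, not an $\mathbb{R}$-Cartier one), and applying your single-curve case to $h$ means running Theorem \ref{mmp} over the base $U=X_1$, which needs $X_1$ to be a variety and $h$ projective. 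The repair is to obtain $h$ from the log-surface MMP rather than from Artin: since the intersection matrix $(E_i\cdot E_j)$ is negative definite and $X$ is $\mathbb{Q}$-factorial, the classes $[E_i]$ are linearly independent in $N_1(X/Y)$, so $\mathbb{R}_{\geq0}[E_1]$ is an extremal ray of $\NE(X/Y)$, and after your perturbation it is $(K_X+\Delta+\epsilon E_1)$-negative; the contraction theorem underlying Theorem \ref{mmp} over $U=Y$ then gives $h:X\to X_1$ projective over $Y$ with $X_1$ $\mathbb{Q}$-factorial by Remark \ref{output} in one stroke --- which also renders your separately argued base case redundant. Alternatively, drop the reduction entirely and argue as the paper does, with the full exceptional divisor and the negativity lemma.
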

\begin{proof}
We put $E=\Exc(f)$ and fix a sufficiently small number $0<\epsilon<1$ 
such that $\Delta+\epsilon E\in[0, 1)$. 
Then we may run the $(K_X+\Delta+\epsilon E)$-minimal model program $g:X\to Z$ over $Y$. 
By the negativity lemma $E$ is $g$-exceptional. 
Thus $f$ and $g$ contract the same curves 
and so $Y$ and $Z$ are isomorphic. 
This means that $Y$ is $\mathbb{Q}$-factorial (cf.~Remark \ref{output}). 
\end{proof}

\begin{rem}\label{q-fac2}
By the proof of Lemma \ref{q-fac1}, the log canonical model of a $\mathbb{Q}$-factorial log surface 
whose boundary divisor contained in $[0, 1)$
is also $\mathbb{Q}$-factorial. 
\end{rem}

The next lemma is well-known properties of ample models. 
For details, see \cite[Lemma 3.6.6]{bchm}. 

\begin{lem}\label{ample-model}
Let $\pi:X\to U$ be a projective morphism from a normal surface 
to a normal quasi-projective variety 
and let $D$ be an $\mathbb{R}$-Cartier divisor on $X$. 
\begin{itemize}
\item[(1)] If $g_i:X\to X_i$ are two ample models of $D$ over $U$ $(i=1, 2)$, 
then there is an isomorphism $\chi:X_1\to X_2$ such that $g_2=\chi\circ g_1$. 
\item[(2)] If $f:X\to Y$ is a weak log canonical model of $D$ over $U$, 
then there are the ample model $g:X\to Z$ of $D$ over $U$ 
and a contraction morphism $h:Y\to Z$ such that $g=h\circ f$ 
and $f_*D\sim_{\mathbb{R}, U}h^*H$, 
where $H$ is an ample divisor corresponding to the ample model $g$ of $D$ over $U$. 
\end{itemize}
\end{lem}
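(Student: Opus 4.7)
The strategy is to follow \cite[Lemma 3.6.6]{bchm}, specialized to our surface setting with the abundance theorem of Section \ref{m-sec3} as the essential input.

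For (1), the plan is to prove $E_1=E_2$ first. Since $X_i$ is projective over $U$ and $H_i$ is ample, one can pick an effective $\mathbb{R}$-divisor $H_i' \sim_{\mathbb{R}, U} H_i$ in sufficiently general position, so that $g_i^* H_i' + E_i$ lies in $|D/U|_\mathbb{R}$. The defining minimality of $E_j$ then forces $E_j - E_i \le g_i^* H_i'$. Choosing $H_i'$ to avoid non-exceptional prime components of $E_j - E_i$ and to avoid the $g_i$-images of $g_i$-exceptional components of $E_j - E_i$ yields $E_j \le E_i$; by symmetry $E_1=E_2$, whence $g_1^* H_1 \sim_{\mathbb{R}, U} g_2^* H_2$. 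Consequently $g_1$ and $g_2$ contract exactly the same curves in fibres of $\pi$, and the rigidity lemma applied to the contraction $g_1$ produces a morphism $\chi : X_1 \to X_2$ with $g_2 = \chi \circ g_1$; symmetry supplies an inverse, so $\chi$ is the desired isomorphism.

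For (2), the $D$-non-positivity of $f$ yields a decomposition $D = f^* f_* D + F$ with $F \ge 0$ an $f$-exceptional divisor. The weak log canonical model hypothesis (interpreted with $D = K_X + \Delta$ as in the original definition) says that $f_* D$ is nef over $U$, hence semiample over $U$ by the abundance theorem recalled in Section \ref{m-sec3}. This produces a contraction morphism $h : Y \to Z$ with $Z$ normal projective over $U$ and an ample divisor $H$ on $Z$ satisfying $f_* D \sim_{\mathbb{R}, U} h^* H$. Setting $g = h \circ f$ we obtain $D \sim_{\mathbb{R}, U} g^* H + F$, and it remains only to check that every $B \in |D/U|_\mathbb{R}$ satisfies $B \ge F$. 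Writing $B = f^* f_* B + G_B$ with $G_B$ an $f$-exceptional divisor, the relation $f_* B \sim_{\mathbb{R}, U} f_* D$ gives $G_B - F \sim_{\mathbb{R}, U} 0$; the negativity lemma, applied via negative definiteness of the intersection form on $f$-exceptional components for the birational morphism $f$ between surfaces, then forces $G_B = F$, whence $B \ge F$.

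The principal obstacle is the uniqueness step $E_1 = E_2$ in (1), which hinges on a delicate general-position choice of $H_i'$ that simultaneously avoids non-exceptional components of $E_j - E_i$ and the $g_i$-images of their $g_i$-exceptional parts. Once this is in place, the remainder of (1) follows from the rigidity lemma, and (2) reduces to the abundance theorem together with a standard application of the negativity lemma.
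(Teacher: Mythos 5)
Your proposal is correct and follows essentially the same route as the paper: part (1) is proved by showing $E_1=E_2$ via general members of the ample linear systems (the paper phrases this as the absence of a stable fixed divisor of $g_1^*H_1$) and then noting $g_1,g_2$ contract the same curves, and part (2) combines the abundance theorem with the negativity lemma exactly as in the paper. Your version merely spells out the general-position and negativity-lemma steps in more detail.
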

\begin{proof}
(1) We put $D\sim_{\mathbb{R}, U}g_i^*H_i+E_i$ given by definition of ample models. 
Since there is no stable fixed divisor of $g_1^*H_1$, we obtain $E_1\geq E_2$. 
By symmetry, $E_1=E_2$. 
Thus we obtain $g_1^*H_1\sim_{\mathbb{R}, U}g_2^*H_2$. 
Then $g_1$ and $g_2$ contract the same curves 
and so there is an isomorphism $\chi:X_1\to X_2$.

(2) By the abundance theorem, we obtain a contraction morphism $h:Y\to Z$ over $U$ 
such that $f_*D\sim_{\mathbb{R}, U}h^*H$, where $H$ is ample over $U$. 
If we put $g=h\circ f$ and $E=D-f^*f_*D$, 
then $E$ is effective and $f$-exceptional. 
If $B\in|D/U|_\mathbb{R}$, then $B\geq E$ by the negativity lemma. 
Therefore $g$ is the ample model of $D$ over $U$. 
\end{proof} 

\begin{lem}\label{bchm3.6.12}
Let $\pi:X\to U$ be a projective morphism from a normal surface to a normal quasi-projective variety. 
Let $\phi:X\to Y$ be a birational contraction over $U$. 
Let $A\geq0$ be an ample $\mathbb{Q}$-divisor over $U$. 
Let $V$ be a finite dimensional affine subspace of $\WDiv_\mathbb{R}(X)$ 
such that $\mathcal{B}_A(V)$ spans $\WDiv_\mathbb{R}(X)$ modulo numerical equivalence over $U$ 
and $\mathcal{W}_{\phi, A, \pi}(V)$ intersects the interior of $\mathcal{B}_A(V)$.

If $X$ and $Y$ are both $\mathbb{Q}$-factorial, then 
\[\mathcal{W}_{\phi, A, \pi}(V)=\mathcal{C}_{\phi, A, \pi}(V).\]
\end{lem}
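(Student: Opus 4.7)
The plan is to prove the two inclusions separately. For $\mathcal{C}_{\phi, A, \pi}(V) \subset \mathcal{W}_{\phi, A, \pi}(V)$, I would first check $\mathcal{A}_{\phi, A, \pi}(V) \subset \mathcal{W}_{\phi, A, \pi}(V)$ directly from the definition of ample model: the decomposition $K_X + \Delta \sim_{\mathbb{R}, U} \phi^*H + E$ with $H$ ample on $Y$ and $E \geq 0$ $\phi$-exceptional shows that $\phi$ is $(K_X+\Delta)$-non-positive and $K_Y + \phi_*\Delta = H$ is ample, in particular nef. I would then observe that $\mathcal{W}_{\phi, A, \pi}(V)$ is closed in $\mathcal{B}_A(V)$, being cut out by finitely many closed linear inequalities in $\Delta$: non-negativity of the coefficients of $E_\Delta := K_X + \Delta - \phi^*\phi_*(K_X+\Delta)$ on each $\phi$-exceptional prime divisor, and $(K_Y + \phi_*\Delta) \cdot R \geq 0$ for each extremal ray $R$ of $\NE(Y/U)$. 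Taking closures gives $\mathcal{C}_{\phi, A, \pi}(V) = \overline{\mathcal{A}_{\phi, A, \pi}(V)} \subset \mathcal{W}_{\phi, A, \pi}(V)$.

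For the reverse inclusion, fix $\Delta \in \mathcal{W}_{\phi, A, \pi}(V)$ and pick $\Delta_0 \in \mathcal{W}_{\phi, A, \pi}(V) \cap \operatorname{int}\mathcal{B}_A(V)$ provided by the hypothesis. Setting $\Delta_t = (1-t)\Delta + t\Delta_0$, convexity of both defining conditions of $\mathcal{W}_{\phi, A, \pi}(V)$ yields $\Delta_t \in \mathcal{W}_{\phi, A, \pi}(V) \cap \operatorname{int}\mathcal{B}_A(V)$ for $t \in (0, 1]$. It would suffice to construct, for each small $t > 0$, a divisor $\Delta_t' \in \mathcal{A}_{\phi, A, \pi}(V)$ arbitrarily close to $\Delta_t$; a diagonal argument then gives $\Delta = \lim_{t \to 0^+} \Delta_t \in \overline{\mathcal{A}_{\phi, A, \pi}(V)} = \mathcal{C}_{\phi, A, \pi}(V)$.

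To produce $\Delta_t'$, I would use the $\mathbb{Q}$-factoriality of $Y$ and the spanning hypothesis on $V$. The pushforward map $\phi_* : N^1(X/U)_\mathbb{R} \to N^1(Y/U)_\mathbb{R}$ is surjective (as $Y$ is $\mathbb{Q}$-factorial), so picking an ample $\mathbb{Q}$-divisor $H$ on $Y/U$, I may find $B \in V$ with $\phi_*B \equiv_U H$. Set $\Delta_t' = \Delta_t + \delta B$ for small $\delta > 0$: then $\Delta_t' \in \operatorname{int}\mathcal{B}_A(V)$, and $K_Y + \phi_*\Delta_t' \equiv_U (K_Y + \phi_*\Delta_t) + \delta H$ is ample (nef + ample). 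If, moreover, $\phi$ is $(K_X + \Delta_t')$-non-positive, then $\phi$ is the ample model of $K_X + \Delta_t'$ and $\Delta_t' \in \mathcal{A}_{\phi, A, \pi}(V)$.

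The main obstacle lies in verifying the non-positivity above: $E_{\Delta_t'} = E_{\Delta_t} + \delta(B - \phi^*\phi_*B)$ remains effective for small $\delta > 0$ only if every coefficient of $E_{\Delta_t}$ on a $\phi$-exceptional prime divisor is strictly positive. This strict positivity must be arranged by a further small adjustment of $\Delta_0$. The $\mathbb{Q}$-factoriality of $X$ is used here: each $\phi$-exceptional prime divisor $D_i$ is $\mathbb{R}$-Cartier, and by the spanning hypothesis there exists $C_i \in V$ with $C_i \equiv_U D_i$ (in fact $C_i - \phi^*\phi_* C_i = D_i$, by negative definiteness of the exceptional intersection matrix). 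Replacing $\Delta_0$ by $\Delta_0 + \sum_i \eta_i C_i$ for small $\eta_i > 0$ increases each exceptional coefficient of $E_{\Delta_0}$ while preserving $\Delta_0 \in \mathcal{W}_{\phi, A, \pi}(V) \cap \operatorname{int}\mathcal{B}_A(V)$ (the nef condition on $Y$ is untouched since $\phi_* C_i \equiv_U 0$), and this strict positivity then propagates to every $\Delta_t$ with $t > 0$, completing the argument.
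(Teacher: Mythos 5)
Your proof is correct, and the easy inclusion is handled essentially as in the paper; but your reverse inclusion takes a genuinely different (and valid) route. The paper, for $\Delta$ in the (relative) interior of $\mathcal{W}_{\phi,A,\pi}(V)$, perturbs by a divisor $B\in V$ with $B\equiv_U \eta\,\phi^*H$ for $H$ ample on $Y$: since $B-\phi^*\phi_*B$ is $\phi$-exceptional and numerically trivial on the exceptional curves, negative definiteness forces it to vanish, so the discrepancy divisor $E_\Delta$ is literally unchanged and $\phi$-non-positivity is automatic, with no need for strict positivity of its coefficients. You instead choose $B$ with only $\phi_*B\equiv_U H$, which forces you to first replace $\Delta_0$ by $\Delta_0+\sum\eta_iC_i$ (with $C_i\equiv_U D_i$, so $C_i-\phi^*\phi_*C_i=D_i$ again by negative definiteness) to make every exceptional coefficient of $E_{\Delta_0}$ strictly positive before perturbing. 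Both arguments use the spanning hypothesis and the $\mathbb{Q}$-factoriality of $X$ and $Y$ in the same places; the paper's choice of perturbation direction is slightly slicker because it collapses your two-step adjustment into one, while your version has the small advantage of treating an arbitrary $\Delta\in\mathcal{W}_{\phi,A,\pi}(V)$ via the segment to $\Delta_0$ rather than appealing to density of the interior.

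One point you should not gloss over: in the forward inclusion you assert that the divisor $E$ in the ample-model decomposition $K_X+\Delta\sim_{\mathbb{R},U}\phi^*H+E$ is $\phi$-exceptional, but the definition of ample model only gives $E\geq0$ with $B\geq E$ for all $B\in|K_X+\Delta/U|_{\mathbb{R}}$. The paper proves exceptionality by choosing a $\phi$-exceptional $F\geq0$ with $\phi^*H-F$ ample over $U$ and noting that $\phi^*H-F+\epsilon E$ has no stable fixed divisor, so every component of $E$ lies in $F$; only then does $\phi_*(K_X+\Delta)\sim_{\mathbb{R},U}H$ follow and one can identify $E$ with $K_X+\Delta-\phi^*\phi_*(K_X+\Delta)$ (again via negative definiteness). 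This step is needed for your claim that $\phi$ is $(K_X+\Delta)$-non-positive.
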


\begin{proof}
We have $\mathcal{W}_{\phi, A, \pi}(V)\supset\mathcal{A}_{\phi, A, \pi}(V)$. 
In fact, if $\Delta\in\mathcal{A}_{\phi, A, \pi}(V)$, 
then there are an ample divisor $H$ over $U$ on $Y$ 
and an effective $\mathbb{R}$-divisor $E$ 
such that $K_X+\Delta\sim_{\mathbb{R}, U}\phi^*H+E$ 
and $B\geq E$ for any $B\in|K_X+\Delta/U|_\mathbb{R}$. 
We may find a $\phi$-exceptional divisor $F\geq0$ such that $\phi^*H-F$ is ample over $U$ 
and if $\epsilon>0$ is sufficiently small, then $\phi^*H-F+\epsilon E$ is ample over $U$. 
We take a component $B$ of $E$. 
Since there is no stable fixed divisor of $\phi^*H-F+\epsilon E$, $B$ is a component of $F$ 
and so $E$ is $\phi$-exceptional. 
Thus $\phi_*(K_X+\Delta)\sim_{\mathbb{R}, U}H$ is ample over $U$ 
and so $\phi$ is a weak log canonical model of $K_X+\Delta$ over $U$. 
Since $\mathcal{W}_{\phi, A, \pi}$ is closed by definition, 
$\mathcal{W}_{\phi, A, \pi}(V)\supset\bar{\mathcal{A}}_{\phi, A, \pi}(V)=\mathcal{C}_{\phi, A, \pi}(V)$.

We show the opposite inclusion. 
Then it is sufficient to prove that a dense subset 
of $\mathcal{W}_{\phi, A, \pi}(V)$ is contained in $\mathcal{A}_{\phi, A, \pi}(V)$.

We take $\Delta$ belonging to the interior of $\mathcal{W}_{\phi, A, \pi}(V)$. 
We take a general ample $\mathbb{Q}$-divisor $H$ over $U$ on $Y$. 
We put $H'=\phi^*H$ and then $\phi$ is $H'$-non-positive. 
We may find $\Delta'\in\mathcal{B}_A(V)$ such that $B=\Delta'-\Delta$ 
is numerically equivalent to $\eta H'$ over $U$ for some $\eta>0$. 
Replacing $H$ by $\eta H$, we may assume that $\eta=1$. 
Then $\phi$ is $(K_X+\Delta+\lambda B)$-non-positive 
and $\phi_*(K_X+\Delta+\lambda B)$ is ample over $U$ 
for any $\lambda>0$. 
Now we have 
\[\Delta+\lambda B=\Delta+\lambda(\Delta'-\Delta)\in\mathcal{B}_A(V)\]
for any $\lambda\in[0, 1]$. 
But then $\phi$ is the ample model of $K_X+\Delta+\lambda B$ over $U$ for any $\lambda\in(0, 1]$. 
Therefore we obtain $\mathcal{W}_{\phi, A, \pi}(V)\subset\mathcal{C}_{\phi, A, \pi}(V)$.
\end{proof}

The following assertion is the log canonical version of Lemma \ref{bchm3.6.12}. 

\begin{lem}\label{bchm3.6.12+}
Let $\pi:X\to U$ be a projective morphism from a normal surface to a normal quasi-projective variety. 
Let $\phi:X\to Y$ be a birational contraction over $U$. 
Let $A\geq0$ be an ample $\mathbb{Q}$-divisor over $U$. 
Let $V$ be a finite dimensional affine subspace of $\WDiv_\mathbb{R}(X)$ 
such that $\mathcal{L}_A(V)$ spans $\WDiv_\mathbb{R}(X)$ modulo numerical equivalence over $U$ 
and $\mathcal{W}_{\phi, A, \pi}(V)$ intersects the interior of $\mathcal{L}_A(V)$.

If $\phi$ is special, then 
\[\mathcal{W}_{\phi, A, \pi}(V)=\mathcal{C}_{\phi, A, \pi}(V).\]
\end{lem}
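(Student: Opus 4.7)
The plan is to emulate the proof of Lemma \ref{bchm3.6.12}, replacing $\mathcal{B}_A(V)$ by $\mathcal{L}_A(V)$ throughout and using the specialness of $\phi$ in place of the $\mathbb{Q}$-factoriality of $X$ and $Y$. Two observations make this substitution possible: $\mathcal{L}_A(V)$ is still a rational polytope (in particular convex), and if $\phi$ is special then every $\phi$-exceptional prime divisor sits inside the $\mathbb{Q}$-factorial locus $U_X$ of $X$ and is therefore $\mathbb{R}$-Cartier on $X$, so one can talk about $\mathbb{R}$-linear combinations of exceptional divisors and their intersection pairing just as in the $\mathbb{Q}$-factorial setting.

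For the inclusion $\mathcal{W}_{\phi, A, \pi}(V) \supset \mathcal{C}_{\phi, A, \pi}(V)$, I would take $\Delta \in \mathcal{A}_{\phi, A, \pi}(V)$ and write $K_X + \Delta \sim_{\mathbb{R}, U} \phi^* H + E$ as in the definition of the ample model, where $H$ is ample over $U$ on $Y$. Using specialness, I would construct an effective $\phi$-exceptional $\mathbb{R}$-Cartier divisor $F$ with $-F$ being $\phi$-ample; this is possible because the intersection pairing on $\Exc(\phi)$ is negative definite and the exceptional curves are $\mathbb{R}$-Cartier. After suitable scaling, $\phi^* H - F$ is ample over $U$, and for small $\epsilon > 0$ so is $\phi^* H - F + \epsilon E$. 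Every component of $E$ must then appear in $F$ by the no-stable-fixed-divisor property, so $E$ is $\phi$-exceptional. Consequently $\phi_*(K_X + \Delta) \sim_{\mathbb{R}, U} H$ is ample over $U$, so $\phi$ is a weak log canonical model of $K_X + \Delta$ over $U$. Since $\mathcal{W}_{\phi, A, \pi}(V)$ is closed by definition, the inclusion follows.

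For the reverse inclusion, I would show that a dense subset of $\mathcal{W}_{\phi, A, \pi}(V)$ is contained in $\mathcal{A}_{\phi, A, \pi}(V)$. Fix $\Delta$ in the interior of $\mathcal{W}_{\phi, A, \pi}(V)$ and a general ample $\mathbb{Q}$-divisor $H$ over $U$ on $Y$, and set $H' = \phi^* H$. Using the hypothesis that $\mathcal{L}_A(V)$ spans $\WDiv_\mathbb{R}(X)$ modulo numerical equivalence over $U$, I would pick $\Delta' \in \mathcal{L}_A(V)$ with $B = \Delta' - \Delta \equiv_U \eta H'$ for some $\eta > 0$, and rescale $H$ so that $\eta = 1$. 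For any $\lambda \in [0,1]$, the divisor $\Delta + \lambda B = (1-\lambda)\Delta + \lambda \Delta'$ lies in $\mathcal{L}_A(V)$ by convexity of the log canonical condition. For $\lambda \in (0,1]$, $\phi$ is $(K_X + \Delta + \lambda B)$-non-positive and $\phi_*(K_X + \Delta + \lambda B)$ is numerically the sum of the nef divisor $\phi_*(K_X + \Delta)$ and the ample divisor $\lambda H$, hence ample over $U$. Therefore $\phi$ is the ample model of $K_X + \Delta + \lambda B$ over $U$, so $\Delta + \lambda B \in \mathcal{A}_{\phi, A, \pi}(V)$; letting $\lambda \to 0$ puts $\Delta$ in $\mathcal{C}_{\phi, A, \pi}(V)$.

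The main obstacle is the construction in Step 1 of the $\phi$-exceptional divisor $F$ with $\phi^* H - F$ ample, in a setting where $X$ need not be $\mathbb{Q}$-factorial. This is exactly the point at which the specialness hypothesis on $\phi$ is essential: it is what guarantees that $\Exc(\phi)$ sits in the $\mathbb{Q}$-factorial locus of $X$, so that exceptional prime divisors are $\mathbb{R}$-Cartier and the standard Nakai--Moishezon argument comparing $(\phi^* H - \epsilon F)^2$ with $H^2$ and using $\phi^* H \cdot F = 0$ goes through as in the $\mathbb{Q}$-factorial case.
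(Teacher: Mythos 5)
Your argument is essentially the paper's: both inclusions are reduced to the proof of Lemma \ref{bchm3.6.12}, with specialness supplying the $\mathbb{R}$-Cartier statements that $\mathbb{Q}$-factoriality gave for free in that lemma. The forward inclusion is handled correctly: the only extra input needed is that the components of $\Exc(\phi)$ are $\mathbb{R}$-Cartier on $X$, which specialness provides, so the construction of the $\phi$-exceptional divisor $F$ with $\phi^*H-F$ ample goes through. (A minor slip: $\Exc(\phi)$ lies in $X\setminus U_X$, not in $U_X$; it is the \emph{complement} of $U_X$ that consists of $\mathbb{Q}$-factorial points.)

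There is one gap in the reverse inclusion, and it is exactly the point the paper's proof singles out as the content of that direction. You assert that $\phi_*(K_X+\Delta+\lambda B)$ is ``numerically the sum of the nef divisor $\phi_*(K_X+\Delta)$ and the ample divisor $\lambda H$, hence ample.'' But on a surface $Y$ that need not be $\mathbb{Q}$-factorial, the pushforward of an $\mathbb{R}$-Cartier divisor need not be $\mathbb{R}$-Cartier, and until one knows that $\phi_*B$ is $\mathbb{R}$-Cartier one cannot speak of the numerical class of $\phi_*(K_X+\Delta+\lambda B)$, let alone its ampleness, nor push the numerical equivalence $B\equiv_U\eta\phi^*H$ forward to $\phi_*B\equiv_U\eta H$. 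This is precisely where specialness must be used on the $Y$-side: on $U_Y\simeq U_X$ the divisor $\phi_*B$ is identified with $B|_{U_X}$, which is $\mathbb{R}$-Cartier because $B=(K_X+\Delta')-(K_X+\Delta)$ is a difference of $\mathbb{R}$-Cartier divisors, and near $Y\setminus U_Y$ every point of $Y$ is $\mathbb{Q}$-factorial, so $\phi_*B$ is $\mathbb{R}$-Cartier there as well. With this verification added, your proof is complete and coincides with the paper's.
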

\begin{proof}
It is clear that $\mathcal{W}_{\phi, A, \pi}(V)\supset\mathcal{C}_{\phi, A, \pi}(V)$. 
In fact, any component of $\Exc(\phi)$ is $\mathbb{R}$-Cartier 
since $\phi$ is special. 
Thus the rest of the proof is similar to that of Lemma \ref{bchm3.6.12}.

We show the opposite inclusion as in the proof of Lemma \ref{bchm3.6.12}. 
Then it is sufficient to prove that $\phi_*B$ is $\mathbb{R}$-Cartier. 
However it is clear. 
In fact, $\phi_*(B\mid_{U_X})\simeq B|_{U_X}$ is $\mathbb{R}$-Cartier 
and $\phi_*B$ is also $\mathbb{R}$-Cartier in a neighborhood of $Y\setminus U_Y$, 
where $U_X$ and $U_Y$ are given by definition of special. 
\end{proof}

\begin{thm}\label{bchm3.11.2}
Let $\pi:X\to U$ be a projective morphism from a $\mathbb{Q}$-factorial surface 
to a quasi-projective variety. 
Let $V$ be a finite dimensional affine subspace of $\WDiv_\mathbb{R}(X)$, 
which is defined over the rationals. 
Fix a general ample $\mathbb{Q}$-divisor $A$ over $U$. 
Let $\phi:X\to Y$ be a birational contraction over $U$.

Then $\mathcal{W}_{\phi, A, \pi}(V)$ is a rational polytope. 
Moreover there are finitely many contraction morphisms $f_i:Y\to Z_i$ over $U$ $(1\leq i\leq m)$
such that if $f:Y\to Z$ is a contraction morphism over $U$ and there is an ample $\mathbb{R}$-divisor 
$D$ on $Z$ over $U$ with $\phi_*(K_X+\Delta)\sim_{\mathbb{R}, U}f^*D$, 
for some $\Delta\in\mathcal{W}_{\phi, A, \pi}(V)$, 
then there are an index $1\leq i\leq m$ and an isomorphism $\eta: Z_i\to Z$ such that $f=\eta\circ f_i$. 
\end{thm}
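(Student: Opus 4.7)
The plan is to imitate the proof of \cite[Theorem 3.11.2]{bchm} in the surface setting, with Lemma \ref{polytope1} and the abundance theorem playing the roles of the BCHM-level inputs.

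First I would dispose of the trivial case where $\mathcal{W}_{\phi,A,\pi}(V)$ is empty. Otherwise, pick some $\Delta_0$ in it; since $A$ is a general ample $\mathbb{Q}$-divisor, a small perturbation within $\mathcal{W}_{\phi,A,\pi}(V)$ can be arranged to place $\Delta_0$ in $[0,1)$. Then $\phi$ is $(K_X+\Delta_0)$-non-positive with $\Delta_0 \in [0,1)$, and Lemma \ref{q-fac1} yields that $Y$ is $\mathbb{Q}$-factorial. In particular the pushforward $\phi_*$ is a well-defined affine map $V \to \WDiv_\mathbb{R}(Y)$ defined over the rationals; denote its image by $W$.

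To show that $\mathcal{W}_{\phi,A,\pi}(V)$ is a rational polytope, I would express it as the intersection of three rational polytopes: $\mathcal{B}_A(V)$, which is a rational polytope by definition; the locus where the exceptional error $E(\Delta) := (K_X+\Delta) - \phi^*(K_Y+\phi_*\Delta)$ is effective and $\phi$-exceptional, cut out by finitely many rational linear inequalities in the coefficients of $\Delta$ at the $\phi$-exceptional curves; and the preimage under $\phi_*$ of the rational polytope of $\pi$-nef classes inside $W$, which exists by Lemma \ref{polytope1} applied to $Y \to U$ together with the $\mathbb{Q}$-factoriality of $Y$ just established.

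For the finite list of contractions, I would use that for each $\Delta \in \mathcal{W}_{\phi,A,\pi}(V)$ the divisor $K_Y+\phi_*\Delta$ is $\pi$-nef, hence $\pi$-semiample by the abundance theorem, so Lemma \ref{ample-model}(2) produces an ample model of $K_X+\Delta$ over $U$ that factors as $X \xrightarrow{\phi} Y \xrightarrow{f_\Delta} Z_\Delta$. Two parameters $\Delta,\Delta'$ yield the same contraction up to isomorphism on the target precisely when $K_Y+\phi_*\Delta$ and $K_Y+\phi_*\Delta'$ lie in the relative interior of the same face of the rational nef polytope in $W$; since there are only finitely many such faces, we obtain finitely many contractions $f_1,\dots,f_m$. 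For the uniqueness statement, given $f: Y \to Z$ and an ample $D$ with $\phi_*(K_X+\Delta) \sim_{\mathbb{R},U} f^*D$, the composition $f \circ \phi$ is the ample model of $K_X+\Delta$ over $U$ by Lemma \ref{ample-model}(2), and Lemma \ref{ample-model}(1) identifies $Z$ with the corresponding $Z_i$ and $f$ with $\eta \circ f_i$ for a unique isomorphism $\eta$. The main obstacle I anticipate is the perturbation step that forces $\mathbb{Q}$-factoriality of $Y$ via Lemma \ref{q-fac1}; once that is cleanly handled, the remaining steps reduce to standard manipulations with rational polytopes and faces of the nef cone.
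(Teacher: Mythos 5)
Your proposal follows essentially the same route as the paper: reduce to the case $\Delta\in[0,1)$ using the ampleness of $A$, invoke Lemma \ref{q-fac1} to get $\mathbb{Q}$-factoriality of $Y$, apply Lemma \ref{polytope1} to the pushed-forward subspace $W$ to obtain the rational nef polytope on $Y$, and describe $\mathcal{W}_{\phi,A,\pi}(V)$ as the intersection of that preimage with $\mathcal{B}_A(V)$ and the effectivity conditions on the exceptional discrepancy; the finiteness of the contractions then comes from the finitely many faces of the resulting rational polytope, exactly as in the paper. The argument is correct and no essential step is missing.
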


\begin{proof}
We take $\Delta\in\mathcal{W}_{\phi, A, \pi}(V)$ 
and put $\Gamma=\phi_*\Delta=\phi_*(A+B)$ and $C=\phi_*A$. 
Let $W$ be the affine subspace of $\WDiv_\mathbb{R}(Y)$ 
given by pushing forward the element of $V$ by $\phi$. 
We show that $\mathcal{N}_{C, \pi^{\prime}}(W)$ is a rational polytope, 
where $\pi^{\prime}:Y\to U$ is a structure morphism. 
Since $A$ is ample over $U$, we may assume that $\Delta\in[0, 1)$. 
Then by Lemma \ref{q-fac1} $Y$ is $\mathbb{Q}$-factorial. 
Therefore, by Lemma \ref{polytope1} $\mathcal{N}_{C, \pi^{\prime}}(W)$ is a rational polytope. 
Since $\Delta\in\mathcal{W}_{\phi, A, \pi}(V)$ if and only if 
$\Gamma=\phi_*\Delta\in\mathcal{N}_{C, \pi^{\prime}}(W)$, $K_X+\Delta-f^*f_*(K_X+\Delta)\geq0$ 
and $\Delta\in[0, 1]$, 
the first statement is clear.

Let $f:Y\to Z$ be a contraction morphism such that 
\[K_Y+\Gamma=K_Y+\phi_*\Delta\sim_{\mathbb{R}, U}f^*D\]
where $\Delta\in\mathcal{W}_{\phi, A, \pi}(V)$ and $D$ is 
an ample $\mathbb{R}$-divisor  over $U$ on $Z$. 
Then there is the unique face $G$ of $\mathcal{N}_{C, \pi^{\prime}}(W)$ 
such that $\Gamma$ belongs to the interior of $G$ 
and the curves contracted by $f$ are determined by $G$. 
Then there is the unique face $F$ of $\mathcal{W}_{\phi, A, \pi}(V)$ 
such that $\Delta$ belongs to the interior of $F$ 
and $G$ is determined by $F$. 
Since $\mathcal{W}_{\phi, A, \pi}(V)$ is a rational polytope, 
it has only finitely many faces $F$. 
We note that for any two contraction morphisms $f:Y\to Z$ and $f':Y\to Z'$ over $U$ 
there is an isomorphism $\eta:Z\to Z'$ with $f'=\eta\circ f$ 
if and only if the curves contracted by $f$ and $f'$ coincide. 
Thus the last statement is clear. 
\end{proof}

The following assertion is the log canonical version of Lemma \ref{bchm3.11.2}. 

\begin{thm}\label{bchm3.11.2+}
Let $\pi:X\to U$ be a projective morphism from a normal surface 
to a quasi-projective variety. 
Let $V$ be a finite dimensional affine subspace of $\WDiv_\mathbb{R}(X)$, 
which is defined over the rationals. 
Fix a general ample $\mathbb{Q}$-divisor $A$ over $U$. 
Let $\phi:X\to Y$ be a special birational contraction over $U$.

Then $\mathcal{W}_{\phi, A, \pi}(V)$ is a rational polytope. 
Moreover there are finitely many contraction morphisms $f_i:Y\to Z_i$ over $U$ $(1\leq i\leq m)$
such that if $f:Y\to Z$ is a contraction morphism over $U$ 
and there is an ample $\mathbb{R}$-divisor 
$D$ on $Z$ over $U$ with $\phi_*(K_X+\Delta)\sim_{\mathbb{R}, U}f^*D$, 
for some $\Delta\in\mathcal{W}_{\phi, A, \pi}(V)$, 
then there are an index $1\leq i\leq m$ and an isomorphism $\eta: Z_i\to Z$ such that $f=\eta\circ f_i$. 
\end{thm}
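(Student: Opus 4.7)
The plan is to mirror the proof of Theorem \ref{bchm3.11.2}, replacing two ingredients: the $\mathbb{Q}$-factorial polytope lemma (Lemma \ref{polytope1}) with its log canonical counterpart (Lemma \ref{polytope2}), and the conclusion ``$Y$ is $\mathbb{Q}$-factorial'' (which in the original comes from Lemma \ref{q-fac1}) with the special hypothesis on $\phi$, which provides exactly the $\mathbb{R}$-Cartier-ness needed in order to push divisors from $X$ to $Y$.

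First I would set up the pushforward data. Given $\Delta\in\mathcal{W}_{\phi, A, \pi}(V)$, put $\Gamma=\phi_*\Delta$ and $C=\phi_*A$, and let $W\subset\WDiv_\mathbb{R}(Y)$ be the affine subspace obtained by pushing $V$ forward by $\phi$. The role of the special hypothesis is that, as in the argument at the end of Lemma \ref{bchm3.6.12+}, the pushforward $\phi_*D'$ of any $\mathbb{R}$-Cartier divisor $D'$ on $X$ is again $\mathbb{R}$-Cartier on $Y$: on the open $U_Y\simeq U_X$ it agrees with $D'|_{U_X}$, and on a neighborhood of $Y\setminus U_Y$ the target is $\mathbb{Q}$-factorial. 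In particular $C=\phi_*A$ is $\mathbb{R}$-Cartier, and for each $\Delta\in\mathcal{W}_{\phi, A, \pi}(V)$ so is $K_Y+\phi_*\Delta$ because $\phi$ is a weak log canonical model for $K_X+\Delta$.

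Next I would check that $(Y,\Gamma)$ is log canonical for every $\Delta\in\mathcal{W}_{\phi, A, \pi}(V)$: since $(X,\Delta)$ is log canonical (as $\Delta\in\mathcal{L}_A(V)$) and $\phi$ is $(K_X+\Delta)$-non-positive, comparing discrepancies on a common log resolution via the negativity lemma forces log canonicity on $Y$. Therefore Lemma \ref{polytope2}, applied on $\pi':Y\to U$ with divisor $C$ and subspace $W$, shows that $\mathcal{N}_{C,\pi'}(W)$ is a rational polytope. The equivalence
\[
\Delta\in\mathcal{W}_{\phi, A, \pi}(V) \iff \phi_*\Delta\in\mathcal{N}_{C, \pi'}(W),\ \ K_X+\Delta-\phi^*\phi_*(K_X+\Delta)\geq 0,\ \ \Delta\in\mathcal{L}_A(V),
\]
whose right-hand side is cut out by rational linear conditions inside the rational polytope $\mathcal{L}_A(V)$, then presents $\mathcal{W}_{\phi, A, \pi}(V)$ as a rational polytope.

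For the finiteness statement I would then argue exactly as in the proof of Theorem \ref{bchm3.11.2}. A contraction $f:Y\to Z$ over $U$ with $K_Y+\Gamma\sim_{\mathbb{R},U}f^*D$ for an ample $\mathbb{R}$-divisor $D$ is determined, up to isomorphism of the target, by the set of curves it contracts, and that set depends only on the unique face of $\mathcal{N}_{C,\pi'}(W)$ containing $\Gamma$ in its relative interior. This face is in turn determined by the unique face of $\mathcal{W}_{\phi, A, \pi}(V)$ containing $\Delta$ in its relative interior; since the latter polytope has only finitely many faces, only finitely many isomorphism classes of contractions $f_i:Y\to Z_i$ occur. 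The main obstacle throughout is verifying that the pushed-forward objects $C=\phi_*A$ and $W=\phi_*V$ behave as $\mathbb{R}$-Cartier data on $Y$, which is precisely where the special hypothesis on $\phi$ intervenes and takes over the role played by $\mathbb{Q}$-factoriality of $Y$ in Theorem \ref{bchm3.11.2}.
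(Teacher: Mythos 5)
Your proposal is correct and takes essentially the same route as the paper, whose proof consists precisely of substituting Lemma \ref{polytope2} for Lemma \ref{polytope1} and then repeating the argument of Theorem \ref{bchm3.11.2} verbatim. You additionally make explicit two points the paper leaves implicit --- that the special hypothesis guarantees pushforwards such as $\phi_*A$ and $K_Y+\phi_*\Delta$ are $\mathbb{R}$-Cartier (the role played by Lemma \ref{q-fac1} in the $\mathbb{Q}$-factorial case), and that $(Y,\phi_*\Delta)$ stays log canonical so that Lemma \ref{polytope2} applies --- which is a sound and welcome clarification.
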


\begin{proof}
We use Lemma \ref{polytope2} instead of Lemma \ref{polytope1}. 
Then the proof of this statement is completely the same as the latter part of that of Lemma \ref{bchm3.11.2}. 
\end{proof}

\begin{thm}[Finiteness of models for $\mathbb{Q}$-factorial log surfaces]\label{finiteness-of-models}
Let $\pi:X\to U$ be a projective morphism from a $\mathbb{Q}$-factorial surface 
to a normal quasi-projective variety. 
Let $V$ be a finite dimensional affine subspace of $\WDiv_\mathbb{R}(X)$, 
which is defined over the rationals. 
Fix a general ample $\mathbb{Q}$-divisor $A$ over $U$.

Then there are finitely many birational contractions $\psi_i:X\to Z_i$ over $U$ $(1\leq i\leq n)$ 
such that if $\psi:X\to Z$ is a weak log canonical model of $K_X+\Delta$ 
over $U$ for some $\Delta\in\mathcal{B}_A(V)$, 
then there is an index $1\leq i\leq n$ and an isomorphism $\xi:Z_i\to Z$ such that $\psi=\xi\circ\psi_i$. 
\end{thm}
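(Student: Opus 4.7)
The plan is to imitate the strategy of Hacon--McKernan and BCHM (Theorem E) adapted to our surface setting, using Theorem \ref{bchm3.11.2} and Lemma \ref{bchm3.6.12} as our main technical inputs and compactness of $\mathcal{B}_A(V)$ as the driving geometric fact. The two non-obvious ingredients that make this work in dimension two are (a) that weak log canonical models may be produced by running the MMP (Theorem \ref{mmp}), which does not require dlt or klt hypotheses, and (b) that $\mathbb{Q}$-factoriality is preserved through $(K_X+\Delta)$-non-positive contractions when $\Delta\in[0,1)$ (Lemma \ref{q-fac1}).

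First I would show that every pseudo-effective point of $\mathcal{B}_A(V)$ lies in some cell $\mathcal{W}_{\phi, A, \pi}(V)$. Given $\Delta\in\mathcal{B}_A(V)$ with $K_X+\Delta$ pseudo-effective over $U$, Theorem \ref{mmp} produces a minimal model $\phi_\Delta: X\to Y_\Delta$ of $K_X+\Delta$ over $U$. Since $\phi_\Delta$ is $(K_X+\Delta)$-negative and $K_{Y_\Delta}+(\phi_\Delta)_*\Delta$ is nef over $U$, it is in particular a weak log canonical model, so $\Delta\in\mathcal{W}_{\phi_\Delta, A, \pi}(V)$. If $K_X+\Delta$ is not pseudo-effective then no weak log canonical model of $K_X+\Delta$ over $U$ exists, so such $\Delta$ contribute nothing. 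By Theorem \ref{bchm3.11.2}, each $\mathcal{W}_{\phi, A, \pi}(V)$ that appears is a rational polytope in $\mathcal{B}_A(V)$.

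Next I would use Lemma \ref{bchm3.6.12} to identify these cells with closures of ample-model cells. For a generic $\Delta$ in the relative interior of $\mathcal{B}_A(V)$ we may arrange $\Delta\in[0,1)$ by the generality of $A$, so Lemma \ref{q-fac1} shows the target $Y$ of the associated weak log canonical model is $\mathbb{Q}$-factorial. Lemma \ref{bchm3.6.12} then gives $\mathcal{W}_{\phi, A, \pi}(V)=\mathcal{C}_{\phi, A, \pi}(V)$. Uniqueness of ample models (Lemma \ref{ample-model}) forces two distinct weak log canonical models $\phi\neq\phi'$ appearing at a common interior point to share their ample model, hence to contract exactly the same curves, hence to be identified as maps out of $X$. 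Thus the cells of maximal dimension have pairwise disjoint interiors, and by compactness of $\mathcal{B}_A(V)$ only finitely many such top-dimensional cells appear. A dimension induction on the faces of the polytope decomposition handles the lower-dimensional strata.

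The main obstacle will be the boundary behaviour: $\Delta$'s lying on the walls of $\mathcal{B}_A(V)$, or on the boundary of the pseudo-effective locus, or with some coefficient equal to $1$ (so that Lemma \ref{q-fac1} does not apply directly). To handle these I would restrict the affine subspace $V$ to each proper face of $\mathcal{B}_A(V)$ (each such face is again a rational polytope of strictly smaller dimension spanned by a rational affine subspace) and invoke the inductive hypothesis on $\dim V$; the base case $\dim V=0$ is trivial since only finitely many boundary divisors can appear. A small perturbation argument, moving interior of coefficients slightly below $1$ along the ample direction $A$, shows that the weak log canonical model at the boundary agrees with a limit of those in the interior, so the finitely many cells produced on faces suffice to cover the boundary. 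Finally, to finish one converts the finitely many cells into the finitely many contractions $\psi_i: X\to Z_i$ using Lemma \ref{ample-model}.
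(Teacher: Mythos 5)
Your overall architecture (cover $\mathcal{E}_{A,\pi}(V)$ by cells $\mathcal{W}_{\phi,A,\pi}(V)$ via Theorem \ref{mmp}, use Theorem \ref{bchm3.11.2} and Lemma \ref{bchm3.6.12} to control each cell, handle the boundary separately) matches the paper's, but the crucial finiteness step is not actually proved. You assert that the top-dimensional cells ``have pairwise disjoint interiors, and by compactness of $\mathcal{B}_A(V)$ only finitely many such top-dimensional cells appear.'' Disjoint interiors plus compactness do not imply finiteness: the intervals $[1/(n+1),1/n]$ tile $[0,1]$ with pairwise disjoint interiors. What is really needed is a \emph{local} finiteness statement --- around every $\Delta_0\in\mathcal{E}_{A,\pi}(V)$ there is a neighbourhood met by only finitely many cells --- which is then globalized by compactness. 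This is exactly the content the paper imports from \cite[Lemma 3.1]{hashizume} (itself proved by fixing one minimal model $Y$ of $K_X+\Delta_0$ and using that the nef polytope on $Y$, Lemma \ref{polytope1}, captures a whole neighbourhood of $\Delta_0$). As written, your proof assumes the theorem's main point. A secondary error in the same paragraph: two weak log canonical models of the same $K_X+\Delta$ share their ample model but need \emph{not} contract the same curves (one may additionally contract $(K_X+\Delta)$-trivial curves), so the inference ``share their ample model, hence contract exactly the same curves, hence are identified'' is false; this is precisely why both the layer of $\phi_i$'s and the layer of contractions $f_{i,j}:Y_i\to Z_{i,j}$ from Theorem \ref{bchm3.11.2} are needed.

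Your treatment of the boundary also glosses over the one place where the surface-specific hypotheses matter. Lemma \ref{q-fac1} and Lemma \ref{bchm3.6.12} require, respectively, coefficients in $[0,1)$ and that $\mathcal{W}_{\phi,A,\pi}(V)$ meet the interior of $\mathcal{B}_A(V)$; restricting $V$ to a face of $\mathcal{B}_A(V)$ where some coefficient equals $1$ does not restore these hypotheses, so the proposed induction on $\dim V$ does not obviously close. The paper instead constructs an explicit $\mathbb{Q}$-linear isomorphism $L=L_2\circ L_1$ (absorbing part of $A$ into general members $A_i$ and rescaling toward a general interior point $\Delta_0$) that preserves $\mathbb{Q}$-linear equivalence over $U$ and moves any $\Delta\in\mathcal{B}_A(V)$ into the interior of a new polytope $\mathcal{B}_{A'}(V')$, after which Lemma \ref{q-fac1}, Lemma \ref{bchm3.6.12}, and Lemma \ref{ample-model}(2) apply. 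You should either reproduce such a perturbation or supply a genuine argument for the boundary strata; the ``small perturbation argument'' as stated is not one.
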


\begin{proof}
We take general ample $\mathbb{Q}$-divisors $H_1, \ldots, H_p$ over $U$, 
which generate $\WDiv_\mathbb{R}(X)$ modulo numerical equivalence over $U$. 
Replacing $V$ by the affine subspace of $\WDiv_\mathbb{R}(X)$ spanned by $V$ 
and the divisors $H_1, \ldots, H_p$, we may assume that 
$\mathcal{B}_A(V)$ spans $\WDiv_\mathbb{R}(X)$ modulo numerical equivalence over $U$. 
We apply \cite[Lemma 3.1]{hashizume} to $\mathcal{C}=\mathcal{B}_A(V)$. 
Then there are finitely many birational contractions $\phi_i:X\to Y_i$ over $U$ ($1\leq i\leq l$) 
such that if $\Delta'\in\mathcal{E}_{A, \pi}(V)$, 
then there is an index $1\leq i\leq l$ satisfying that $\phi_i$ 
is a weak log canonical model of $K_X+\Delta'$ over $U$. 
Moreover, by Theorem \ref{bchm3.11.2} for each index $1\leq i\leq m$ 
there are finitely many contraction morphism $f_{i, j}:Y_i\to Z_{i, j}$ over $U$ ($1\leq j\leq m$)
such that if $\Delta'\in\mathcal{W}_{\phi, A, \pi}(V)$ 
and there is a contraction morphism $f:Y_i\to Z$ over $U$ with 
\[K_{Y_i}+\Gamma_i=K_{Y_i}+{\phi_i}_*\Delta'\sim_{\mathbb{R}, U}f^*D\]
for some ample $\mathbb{R}$-divisor $D$ on $Z$ over $U$, 
then there are an index $1\leq j\leq m$ and an isomorphism $\xi:Z_{i, j}\to Z$ 
such that $f=\xi\circ f_{i, j}$. 
Then $\psi_k=f_{i, j}\circ \phi_i:X\to Z_{i, j}$ ($1\leq k\leq n$) are finitely many morphisms. 
We take a weak log canonical model $\psi:X\to Z$ of $K_X+\Delta$ over $U$ for $\Delta\in\mathcal{B}_A(V)$. 
Then we may assume that $\Delta$ is contained in the interior of $\mathcal{B}_A(V)$. 
In fact, since $\mathcal{B}_A(V)$ is a rational polytope, 
$\mathcal{B}_A(V)$ spans an affine subspace of $V_A$, which is defined over the rationals. 
Possibly replacing $V$, we may assume that $\mathcal{B}_A(V)$ spans $V_A$. 
We take a general $\mathbb{Q}$-divisor $\Delta_0\in[0, 1)$. 
By compactness, we can take $\Delta_1, \ldots, \Delta_q\in V_A$ 
such that $\mathcal{B}_A(V)$ is contained in the simplex spanned by $\Delta_1, \ldots, \Delta_q$. 
We fix $\epsilon>0$ sufficiently small such that 
\[\epsilon(\Delta_i-\Delta_0)+(1-2\epsilon)A\]
is an ample $\mathbb{Q}$-divisor over $U$ for $1\leq i\leq q$. 
We take general members $A_i\in|\epsilon(\Delta_i-\Delta_0)+(1-2\epsilon)A/U|_\mathbb{Q}$ 
and put $A'=\epsilon A$. 
Then we define $L:V_A\to L(V_A)=V'_{A'}$ by 
\[L(\Delta_i)=(1-\epsilon)\Delta_i+A_i+\epsilon\Delta_0
+A'-(1-\epsilon)A\sim_{\mathbb{Q}, U}\Delta_i,\]
and extend to the whole of $V_A$ linearly. 
Then $L$ is a $\mathbb{Q}$-linear isomorphism, 
which preserves $\mathbb{Q}$-linearly equivalence over $U$. 
Now $L$ is decomposed into $L_2\circ L_1$ such that 
\begin{align*}
L_1(\Delta_i)=\Delta_i+A_i/(1-\epsilon)+A'-A, 
\intertext{ and} 
L_2(\Delta)=(1-\epsilon)\Delta+\epsilon(A'+\Delta_0). 
\end{align*}
If $\Delta\in\mathcal{B}_A(V)$, then $L_1(\Delta)\in\mathcal{B}_{A'}(V')$. 
Therefore, $L(\Delta)$ is contained in the interior of $\mathcal{B}_{A'}(V')$. 
Then by Lemma \ref{q-fac1} $Z$ is $\mathbb{Q}$-factorial 
and so by Lemma \ref{bchm3.6.12} we may find $\Delta'\in\mathcal{E}_{A, \pi}(V)$ 
such that $\psi$ is the ample model of $K_X+\Delta'$ over $U$. 
We take an index $1\leq i\leq l$ such that $\phi_i$ is a weak log canonical model of $K_X+\Delta'$ over $U$. 
By (2) of Lemma \ref{ample-model} there is a contraction morphism $f:Y_i\to Z$ such that
\[K_{Y_i}+\Gamma'=f^*(K_Z+\Theta')\]
where $\Gamma'={\phi_i}_*\Delta'$ and $\Theta'=\psi_*\Delta'$. 
Since $\psi$ is the ample model of $K_X+\Delta'$ over $U$, 
$K_Z+\Theta'=\psi_*(K_X+\Delta')$ is ample over $U$. 
Thus there are an index $1\leq j\leq m$ and an isomorphism $\xi:Z_{i, j}\to Z$ 
such that $f=\xi\circ f_{i, j}$. 
Then 
\[\psi=f\circ\phi_i=\xi\circ f_{i, j}\circ\phi_i=\xi\circ\psi_j\]
for some index $1\leq j\leq n$. 
\end{proof}

The following assertion is the log canonical version of Lemma \ref{finiteness-of-models}. 

\begin{thm}[Finiteness of models for log canonical surfaces]\label{finiteness-of-models+}
Let $\pi:X\to U$ be a projective morphism from a normal surface 
to a normal quasi-projective variety. 
Let $V$ be a finite dimensional affine subspace of $\WDiv_\mathbb{R}(X)$, 
which is defined over the rationals. 
Fix a general ample $\mathbb{Q}$-divisor $A$ over $U$.

Then there are finitely many special birational contractions $\psi_i:X\to Z_i$ over $U$ $(1\leq i\leq n)$ 
such that if $\psi:X\to Z$ is a special weak log canonical model of $K_X+\Delta$ 
over $U$ for some $\Delta\in\mathcal{L}_A(V)$, 
then there is an index $1\leq i\leq n$ and an isomorphism $\xi:Z_i\to Z$ such that $\psi=\xi\circ\psi_i$. 
\end{thm}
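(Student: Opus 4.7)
The plan is to mirror the proof of Theorem \ref{finiteness-of-models} with $\mathcal{L}_A(V)$ taking the role of $\mathcal{B}_A(V)$, and to use the ``special'' hypothesis in place of $\mathbb{Q}$-factoriality. The substitute for Lemma \ref{q-fac1} is Lemma \ref{special2}: every MMP output for a log canonical pair is automatically special, so the birational contractions that appear below stay inside the class on which Lemma \ref{bchm3.6.12+} and Theorem \ref{bchm3.11.2+} apply.

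First, enlarge $V$ by general ample $\mathbb{Q}$-divisors $H_1,\dots,H_p$ on $X$ so that $\mathcal{L}_A(V)$ spans $\WDiv_\mathbb{R}(X)$ modulo numerical equivalence over $U$. Next, apply \cite[Lemma 3.1]{hashizume} to $\mathcal{L}_A(V)$ to obtain finitely many birational contractions $\phi_i:X\to Y_i$ over $U$ $(1\leq i\leq l)$ such that any $\Delta'\in\mathcal{E}_{A,\pi}(V)\cap\mathcal{L}_A(V)$ admits one of the $\phi_i$ as a weak log canonical model. Since $(X,\Delta')$ is log canonical and $\phi_i$ is produced by the MMP, Lemma \ref{special2} forces each $\phi_i$ to be special. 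Therefore Theorem \ref{bchm3.11.2+} applies to each $\phi_i$ and gives finitely many contraction morphisms $f_{i,j}:Y_i\to Z_{i,j}$ over $U$ ($1\leq j\leq m_i$) that catalogue the possible ample models on $Y_i$. The compositions $\psi_k=f_{i,j}\circ\phi_i$ form the desired finite list $\psi_1,\dots,\psi_n$.

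Now let $\psi:X\to Z$ be a special weak log canonical model of $K_X+\Delta$ for some $\Delta\in\mathcal{L}_A(V)$. The same affine rescaling trick as in the proof of Theorem \ref{finiteness-of-models} (choose a general $\Delta_0\in[0,1)$, general members $A_i$ of appropriate ample classes, and compose the resulting linear maps $L_1,L_2$) lets us assume that $\Delta$ lies in the interior of $\mathcal{L}_A(V)$. Because $\psi$ is special, Lemma \ref{bchm3.6.12+} gives $\mathcal{W}_{\psi,A,\pi}(V)=\mathcal{C}_{\psi,A,\pi}(V)$, so $\mathcal{A}_{\psi,A,\pi}(V)$ is dense in $\mathcal{W}_{\psi,A,\pi}(V)$; thus we may pick $\Delta'\in\mathcal{E}_{A,\pi}(V)\cap\mathcal{L}_A(V)$ with $\psi$ the ample model of $K_X+\Delta'$ over $U$. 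Choose an index $i$ so that $\phi_i$ is a weak log canonical model of $K_X+\Delta'$; by Lemma \ref{ample-model}(2) there is a contraction $f:Y_i\to Z$ with $\psi=f\circ\phi_i$ and
\[
K_{Y_i}+(\phi_i)_*\Delta'\sim_{\mathbb{R},U}f^{*}(K_Z+\psi_*\Delta'),
\]
the right hand side being ample over $U$. Theorem \ref{bchm3.11.2+} now identifies $f$ with $\xi\circ f_{i,j}$ for some index $j$ and some isomorphism $\xi:Z_{i,j}\to Z$, whence $\psi=\xi\circ\psi_k$ as required.

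The step I expect to require the most care is the reduction to an interior point of $\mathcal{L}_A(V)$ together with the preservation of specialness: in the $\mathbb{Q}$-factorial setting, Lemma \ref{q-fac1} automatically transported $\mathbb{Q}$-factoriality through $(K_X+\Delta)$-non-positive contractions, whereas in the log canonical setting one must check at each invocation of Lemma \ref{bchm3.6.12+} and of Lemma \ref{ample-model}(2) that the birational contractions produced are special, which is exactly what Lemma \ref{special2} provides for MMP outputs of log canonical pairs. Once this bookkeeping is done, the remaining combinatorics of matching faces of $\mathcal{W}_{\phi_i,A,\pi}(V)$ to contractions $f_{i,j}$ is formally identical to the $\mathbb{Q}$-factorial proof.
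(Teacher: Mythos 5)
Your proposal is correct and follows essentially the same route as the paper: the paper's own proof simply invokes \cite[Lemma 3.1]{hashizume} applied to $\mathcal{L}_A(V)$ (with the resulting $\phi_i$ special, as you note via Lemma \ref{special2}) and then declares the rest ``similar to'' the proof of Theorem \ref{finiteness-of-models}, which is exactly the adaptation you carry out. Your explicit bookkeeping of where specialness replaces $\mathbb{Q}$-factoriality (Lemma \ref{bchm3.6.12+} and Theorem \ref{bchm3.11.2+} in place of Lemma \ref{bchm3.6.12} and Theorem \ref{bchm3.11.2}) is precisely what the paper leaves implicit.
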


\begin{proof}
First, we note that we can apply \cite[Lemma 3.1]{hashizume} to $\mathcal{C}=\mathcal{L}_A(V)$ in this setting with some modifications to the proof. 
In other words, we have finitely many special birational contractions 
$\phi_i:X\to Y_i$ over $U$ ($1\leq i\leq l$) 
such that if $\Delta'\in\mathcal{E}_{A, \pi}(V)$, 
then there is an index $1\leq i\leq l$ such that $\phi_i$ 
is a special weak log canonical model of $K_X+\Delta'$ over $U$. 
Then the rest of the proof is similar to that of Theorem \ref{finiteness-of-models}. 
\end{proof}

\begin{cor}\label{partition of wlcm}
Let $\pi:X\to U$ be a projective morphism from a $\mathbb{Q}$-factorial surface 
to a normal quasi-projective variety. 
Let $V$ be a finite dimensional affine subspace of $\WDiv_\mathbb{R}(X)$, 
which is defined over the rationals. 
Let $A$ be a general ample $\mathbb{Q}$-divisor over $U$.

Then there are finitely many birational contractions $\phi_i:X\to Y_i$ over $U$ $(1\leq i\leq l)$
such that
\[\mathcal{E}_{A, \pi}(V)=\displaystyle\bigcup^l_{i=1}\mathcal{W}_i\]
where each $\mathcal{W}_i=\mathcal{W}_{\phi_i, A, \pi}(V)$ is a rational polytope. 
Moreover if $\phi:X\to Y$ is a minimal model of $K_X+\Delta$ 
over $U$ for some $\Delta\in\mathcal{E}_{A, \pi}(V)$, 
then there is an index $1\leq i\leq l$ such that $\phi=\phi_i$. 
\end{cor}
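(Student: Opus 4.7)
The plan is to read off the birational contractions from the finiteness-of-models statement, to use the minimal model program to cover $\mathcal{E}_{A,\pi}(V)$, and to invoke Theorem \ref{bchm3.11.2} for the rational polytope structure. First I would apply Theorem \ref{finiteness-of-models} to obtain a finite family of birational contractions $\psi_j:X\to Z_j$ over $U$ ($1\leq j\leq N$) such that every weak log canonical model of $K_X+\Delta$ over $U$, for any $\Delta\in\mathcal{B}_A(V)$, is isomorphic over $U$ to some $\psi_j$. After discarding those indices that do not arise as a weak log canonical model for any $\Delta\in\mathcal{E}_{A,\pi}(V)$, I relabel the remaining contractions as $\phi_1,\dots,\phi_l$ and set $\mathcal{W}_i=\mathcal{W}_{\phi_i,A,\pi}(V)$.

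The inclusion $\bigcup_i\mathcal{W}_i\subset\mathcal{E}_{A,\pi}(V)$ is built into the definition of $\mathcal{W}_{\phi_i,A,\pi}(V)$. For the reverse inclusion, fix $\Delta\in\mathcal{E}_{A,\pi}(V)\subset\mathcal{B}_A(V)$. By Theorem \ref{mmp} we may run the $(K_X+\Delta)$-MMP over $U$; pseudo-effectivity of $K_X+\Delta$ forbids the MMP from terminating in a Mori fiber space, on which $-(K_X+\Delta)$ would be relatively ample. Hence the MMP produces a minimal model $\phi:X\to Y$. Since a minimal model is in particular $(K_X+\Delta)$-non-positive with $K_Y+\phi_*\Delta$ nef, it is a weak log canonical model, and Theorem \ref{finiteness-of-models} forces $\phi\cong\phi_i$ for some $i$, so $\Delta\in\mathcal{W}_i$. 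Each $\mathcal{W}_i$ is a rational polytope by Theorem \ref{bchm3.11.2}.

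The final assertion about minimal models is the same observation read backwards: any minimal model is a weak log canonical model, and the list $\phi_1,\dots,\phi_l$ is exhaustive, so $\phi$ must coincide (up to isomorphism of the target over $U$) with some $\phi_i$, which is the sense in which the equation ``$\phi=\phi_i$'' is to be read, matching the convention of Theorem \ref{finiteness-of-models}. I do not anticipate a genuine obstacle here; the corollary is essentially a repackaging of Theorem \ref{mmp}, Theorem \ref{bchm3.11.2} and Theorem \ref{finiteness-of-models}, with the small point of care being the verification that, in the pseudo-effective case, the MMP terminates at a minimal model rather than at a Mori fiber space.
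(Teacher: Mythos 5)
Your proposal is correct and follows essentially the same route as the paper: the paper's proof also reduces the corollary to Theorem \ref{bchm3.11.2} and Theorem \ref{finiteness-of-models}, with the only remaining point being that every pseudo-effective $K_X+\Delta$ admits a minimal model, which it deduces from Theorem \ref{mmp} exactly as you do (your explicit remark that pseudo-effectivity rules out a Mori fiber space output is the content the paper leaves as ``clear'').
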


\begin{proof}
By Theorem \ref{bchm3.11.2} and Theorem \ref{finiteness-of-models}, 
it is sufficient to prove that 
if $\Delta\in\mathcal{E}_{A, \pi}(V)$, 
then $K_X+\Delta$ has a minimal model over $U$. 
However, it is clear by Theorem \ref{mmp}. 
\end{proof}

The following assertion is the log canonical version of Lemma \ref{partition of wlcm}. 

\begin{cor}\label{partition of wlcm+}
Let $\pi:X\to U$ be a projective morphism from a normal surface 
to a normal quasi-projective variety. 
Let $V$ be a finite dimensional affine subspace of $\WDiv_\mathbb{R}(X)$, 
which is defined over the rationals. 
Let $A$ be a general ample $\mathbb{Q}$-divisor over $U$.

Then there are finitely many special birational contractions $\phi_i:X\to Y_i$ over $U$ $(1\leq i\leq l)$
such that
\[\mathcal{E}_{A, \pi}(V)=\displaystyle\bigcup^l_{i=1}\mathcal{W}_i\]
where each $\mathcal{W}_i=\mathcal{W}_{\phi_i, A, \pi}(V)$ is a rational polytope. 
Moreover if $\phi:X\to Y$ is a minimal model of $K_X+\Delta$ 
over $U$ for some $\Delta\in\mathcal{E}_{A, \pi}(V)$, 
then there is an index $1\leq i\leq l$ such that $\phi=\phi_i$. 
\end{cor}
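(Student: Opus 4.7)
The plan is to mirror the proof of Corollary \ref{partition of wlcm}, replacing each ingredient by its log canonical counterpart. First I would apply Theorem \ref{finiteness-of-models+} to produce finitely many special birational contractions $\psi_i:X\to Z_i$ over $U$ ($1\leq i\leq l$) such that every special weak log canonical model of $K_X+\Delta$ for some $\Delta\in\mathcal{L}_A(V)$ coincides, up to the unique isomorphism of targets, with one of the $\psi_i$. Theorem \ref{bchm3.11.2+} then ensures that each $\mathcal{W}_i=\mathcal{W}_{\psi_i,A,\pi}(V)$ is a rational polytope.

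The remaining content is to show that every $\Delta\in\mathcal{E}_{A,\pi}(V)$ lies in some $\mathcal{W}_i$, and that any minimal model of $K_X+\Delta$ for such a $\Delta$ appears in the list. Since the ambient polytope here is $\mathcal{P}=\mathcal{L}_A(V)$, the pair $(X,\Delta)$ is log canonical, so Theorem \ref{mmp} supplies a sequence of divisorial contractions ending in either a minimal model or a Mori fiber space; pseudo-effectivity of $K_X+\Delta$ over $U$ forces the minimal model outcome. Denote it $\phi:X\to Y$. A minimal model is automatically a weak log canonical model, and Lemma \ref{special2} guarantees that $\phi$ is special. Theorem \ref{finiteness-of-models+} then identifies $\phi$ with some $\psi_i$, so $\Delta\in\mathcal{W}_i$; relabeling yields the finite list $\phi_i:X\to Y_i$ demanded by the statement, and gives the desired equality $\mathcal{E}_{A,\pi}(V)=\bigcup_{i=1}^{l}\mathcal{W}_i$ (the inclusion $\supseteq$ being immediate from the definition of $\mathcal{W}_i$).

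The only conceptual wrinkle compared with the $\mathbb{Q}$-factorial case is the need to verify speciality of the minimal model before feeding it into the finiteness theorem, and Lemma \ref{special2} supplies this for free. Everything else is a routine transcription of the proof of Corollary \ref{partition of wlcm}, so I do not expect a genuine obstacle here; the substantive work was already absorbed into the log canonical analogues \ref{bchm3.11.2+} and \ref{finiteness-of-models+}.
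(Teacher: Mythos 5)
Your proposal is correct and follows essentially the same route as the paper: the paper's own proof likewise reduces everything to Theorem \ref{bchm3.11.2+} and Theorem \ref{finiteness-of-models+}, together with the observation (via Theorem \ref{mmp} and pseudo-effectivity) that every $\Delta\in\mathcal{E}_{A,\pi}(V)$ admits a minimal model, which is special by Lemma \ref{special2}.
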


\begin{proof}
This is an easy consequence of Theorem \ref{bchm3.11.2+} and Theorem \ref{finiteness-of-models+}. 
We note that by Lemma \ref{special2} a minimal model of $K_X+\Delta$ over $U$ is special.
\end{proof}


\section{The wall-crossing of ample models}\label{m-sec5} 

In this section, we will closely follow \cite[Section 3]{hacon-mackernan}. 

\begin{thm}{\em(cf.~\cite[Theorem 3.6]{hacon-mackernan})}\label{hm3.3}
Let $(Z, \Theta)$ be a projective $\mathbb{Q}$-factorial log surface. 
Let $V$ be a finite dimensional affine subspace of $\WDiv_\mathbb{R}(Z)$, 
which is defined over the rationals. 
Let $A\geq0$ be an ample $\mathbb{Q}$-divisor on $Z$. 
Then there are finitely many contraction morphisms 
$f_i:Z\to X_i$ $(1\leq i\leq l)$ with the following properties:
\begin{itemize}
\item[(1)] $\{ \mathcal{A}_i=\mathcal{A}_{A, f_i}(V)\mid 1\leq i\leq l\}$ 
is a partition of  $\mathcal{E}_{A}(V)$. 
$\mathcal{A}_i$ is a finite union of relative interior of rational polytopes. 
If $f_i$ is birational, then $\mathcal{C}_i$ is a rational polytope, 
where $\mathcal{C}_i$ is the closure of $\mathcal{A}_i$. 
\item[(2)] If $\mathcal{A}_j\cap\mathcal{C}_i\neq\emptyset$ for two indices $1\leq i, j\leq m$, 
then there is a contraction morphism $f_{i, j}:X_i\to X_j$ such that $f_j=f_{i, j}\circ f_i$. 
\end{itemize}

In addition, suppose that $V$ spans $\NS(Z)$, 
where $\NS(Z)$ is the N$\acute{e}$ron-Severi group of $Z$. 
\begin{itemize}
\item[(3)] We take $1\leq i\leq l$ such that a connected component $\mathcal{C}$ of $\mathcal{C}_i$ 
intersects the interior of $\mathcal{B}_A(V)$. 
Then the following are equivalent:
\begin{itemize}
\item[$\bullet$] $\mathcal{C}$ spnans $V$.
\item[$\bullet$] $f_i$ is birational and $X_i$ is $\mathbb{Q}$-factorial. 
\end{itemize}
\item[(4)] If $\mathcal{C}_i$ spans $V$ and $\Theta$ is a general point of $\mathcal{A}_j\cap\mathcal{C}_i$
which is also a point of the interior of $\mathcal{B}_A(V)$ for two indices $1\leq i, j\leq m$,
then $\mathcal{C}_i$ and $\NE(X_i/X_j)^*\times \mathbb{R}^k$ are locally isomorphic 
in a neighbourhood of $\Theta$ for some $k\geq0$. 
Moreover $\rho(X_i/X_j)=\dim\mathcal{C}_i-\dim\mathcal{C}_j\cap\mathcal{C}_i$.
\end{itemize}
\end{thm}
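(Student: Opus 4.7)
The plan is to combine the finiteness-of-models theorem (Theorem \ref{finiteness-of-models}) with the target-side finiteness statement (Theorem \ref{bchm3.11.2}) to produce the desired finite family $\{f_i\}$, and then deduce the partition, factorization, spanning, and local-cone statements in turn, mirroring \cite[Theorem 3.6]{hacon-mackernan}. First, Corollary \ref{partition of wlcm} furnishes finitely many birational contractions $\phi_k:Z\to Y_k$ with rational polytopes $\mathcal{W}_k=\mathcal{W}_{\phi_k,A}(V)$ covering $\mathcal{E}_A(V)$, and for each $k$ Theorem \ref{bchm3.11.2} supplies finitely many contractions $g_{k,j}:Y_k\to X_{k,j}$ exhausting the possible ample targets of $\phi_{k*}(K_Z+\Delta)$ for $\Delta\in\mathcal{W}_k$. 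By Lemma \ref{ample-model}(2) applied to $\phi_k$, the ample model of $K_Z+\Delta$ is then the composite $g_{k,j}\circ\phi_k$, and identifying indices whose targets are isomorphic via Lemma \ref{ample-model}(1) yields the required finite family $f_i:Z\to X_i$. Part (1) then follows: each $\mathcal{A}_i$ is carved out inside some $\mathcal{W}_k$ by requiring the pushforward to lie in the relative interior of the face of the nef polytope corresponding to $g_{k,j}$, hence is a finite union of relative interiors of rational polytopes; when $f_i$ is birational, $f_i^{*}$ is injective on numerical classes, so $\mathcal{C}_i$ sits inside a single $\mathcal{W}_k$ and inherits its polytope structure.

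For (2), if $\Delta\in\mathcal{A}_j\cap\mathcal{C}_i$ then, after passing to the interior of $\mathcal{C}_i$ and invoking Lemma \ref{bchm3.6.12}, we obtain $\mathcal{C}_i=\mathcal{W}_{f_i,A}(V)$, so $f_i$ is a weak log canonical model of $K_Z+\Delta$; Lemma \ref{ample-model}(2) then produces a contraction $f_{i,j}:X_i\to X_j$ with $f_j=f_{i,j}\circ f_i$. For (3), if $f_i$ is birational and $X_i$ is $\mathbb{Q}$-factorial then $\rho(X_i)=\rho(Z)$ and the ample cone of $X_i$ has top dimension in $N^{1}(X_i)_{\mathbb{R}}$; pulling back along $f_i$ and using that $V$ spans $\NS(Z)$ forces $\mathcal{C}$ to span $V$. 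Conversely, if $\mathcal{C}$ spans $V$, then $\mathcal{A}_i$ contains a $V_A$-open subset, and perturbing a general $\Delta\in\mathcal{A}_i$ in every direction of $\NS(Z)$ while remaining in $\mathcal{A}_i$ forces $f_{i*}$ to induce an injection on numerical classes, so $f_i$ is birational with $\rho(X_i)=\rho(Z)$; $\mathbb{Q}$-factoriality of $X_i$ then follows from Lemma \ref{q-fac1} applied after reducing $\Delta$ into $[0,1)$ via a generic perturbation of $A$.

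For (4), near a general $\Theta\in\mathcal{A}_j\cap\mathcal{C}_i$ lying in the interior of $\mathcal{B}_A(V)$, the map $\Delta\mapsto f_{i*}(K_Z+\Delta)$ identifies a neighbourhood of $\Theta$ in $\mathcal{C}_i$ with a neighbourhood of $f_{i*}(K_Z+\Theta)$ in the relative nef cone of $X_i$ over $X_j$, which is $\NE(X_i/X_j)^{*}$ by duality; the extra factor $\mathbb{R}^{k}$ absorbs the directions of $V$ that vanish in $N^{1}(X_i/X_j)_{\mathbb{R}}$. The dimension equality $\rho(X_i/X_j)=\dim\mathcal{C}_i-\dim\mathcal{C}_i\cap\mathcal{C}_j$ then reads off from the fact that $\mathcal{C}_i\cap\mathcal{C}_j$ corresponds to the face of the relative nef cone cut out by $f_{i,j}$, which has codimension $\rho(X_i/X_j)$. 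I expect the main obstacle to be the converse half of (3): simultaneously keeping the perturbation of $\Delta$ inside $\mathcal{B}_A(V)$ and varying its class independently in every direction of $\NS(Z)$ requires exploiting the generality of $A$ together with an affine change of variables in the spirit of the proof of Theorem \ref{finiteness-of-models}, and only after this setup can Lemma \ref{q-fac1} be safely invoked to conclude $\mathbb{Q}$-factoriality of $X_i$.
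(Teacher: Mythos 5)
Your overall architecture for producing the finite family $\{f_i\}$ -- combining Corollary \ref{partition of wlcm} with Theorem \ref{bchm3.11.2} and the uniqueness of ample models -- is exactly the paper's route to (1), and your reading of (1) is correct. The first genuine gap is in (2). You derive the factorization by invoking Lemma \ref{bchm3.6.12} to get $\mathcal{C}_i=\mathcal{W}_{f_i,A}(V)$ and then applying Lemma \ref{ample-model}(2). But Lemma \ref{bchm3.6.12} requires $f_i$ to be a \emph{birational} contraction with $\mathbb{Q}$-factorial target and requires $\mathcal{B}_A(V)$ to span $\NS(Z)$ modulo numerical equivalence -- a hypothesis that is only imposed starting from part (3), not in part (2). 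When $f_i$ is of fiber type ($X_i$ a curve or a point), $\mathcal{W}_{f_i,A}(V)$ is not even defined and your argument produces nothing. The paper handles all cases uniformly: take a segment $\Theta_t$ from $\Theta\in\mathcal{A}_j\cap\mathcal{C}_i$ into $\mathcal{A}_i$, find by Corollary \ref{partition of wlcm} a single weak log canonical model $f:Z\to X$ valid on the whole segment, write $K_X+\Delta_t=g_i^*H_t$ with $H_t$ affine in $t$, and use semiampleness of $K_X+\Delta_0$ (abundance) to conclude that $H_0$ is semiample on $X_i$; the induced contraction of $X_i$ is $f_{i,j}$.

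In (3), the claim that $f_i$ birational with $X_i$ $\mathbb{Q}$-factorial implies $\rho(X_i)=\rho(Z)$ (and your converse, concluding ``$f_i$ is birational with $\rho(X_i)=\rho(Z)$'') is false: a birational contraction morphism that contracts a curve strictly drops the Picard number, and equality would force $f_i$ to be an isomorphism. So the mechanism by which you pass between ``$\mathcal{C}$ spans $V$'' and birationality does not work as written. The paper's argument for the forward implication takes a minimal model $f$ of $K_Z+\Theta$ for $\Theta$ interior to $\mathcal{C}\cap\mathcal{A}_i$ -- such an $f$ \emph{is} birational with $\mathbb{Q}$-factorial target by Lemma \ref{q-fac1} -- applies Lemma \ref{bchm3.6.12} to $f$ to get $\mathcal{W}_{f,A}(V)=\mathcal{C}_{f,A}(V)$, and identifies $f=f_i$; the reverse implication is the open perturbation of $\Theta$ by $B\in V$ keeping $K_{X_i}+{f_i}_*(\Theta+B)$ ample and $f_i$ non-positive. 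Finally, in (4) your sketch omits the one step that actually requires work: after reducing to $\mathcal{N}_{f_*A}(W)\simeq\NE(X_i/X_j)^*\times\mathbb{R}^l$ near the point, one must prove $l=0$, i.e.\ that directions numerically trivial over $X_j$ contribute nothing. The paper does this by contradiction via abundance: a perturbation $K_X+f_*\Theta\pm\gamma f_*D$ that is numerically trivial over $X_j$ is $\mathbb{R}$-linearly trivial over $X_j$, so $f_j$ remains the ample model on both sides, contradicting the asserted product structure. Without this, the displayed local isomorphism and the dimension formula $\rho(X_i/X_j)=\dim\mathcal{C}_i-\dim\mathcal{C}_i\cap\mathcal{C}_j$ are not established.
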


\begin{proof}
(1) The first statement is an easy consequence of Theorem \ref{bchm3.11.2}, 
Corollary \ref{partition of wlcm} and 
the uniqueness of ample models (cf.~Lemma \ref{ample-model}). 
Considering the construction of ample models, the last statement is clear (cf.~Proof of Theorem \ref{bchm3.11.2}).

(2) We take $\Theta\in\mathcal{A}_j\cap\mathcal{C}_i$ and $\Theta'\in\mathcal{A}_i$ 
such that 
\[\Theta_t=\Theta+t(\Theta'-\Theta)\in\mathcal{A}_i\]
for any $t\in(0, 1]$. 
By Corollary \ref{partition of wlcm}, there are $\delta>0$ and a birational contraction $f:Z\to X$ 
such that $f$ is a weak log canonical model of $K_Z+\Theta_t$ for any $t\in[0, \delta]$. 
Replacing $\Theta'=\Theta_1$ by $\Theta_\delta$, we may assume that $\delta=1$. 
We put $\Delta_t=f_*\Theta_t$. 
By the abundance theorem, $K_X+\Delta_t$ is semiample. 
Thus there is the induced contraction morphism $g_i:X\to X_i$ 
together with ample divisors $H_{1/2}$ and $H_1$
such that 
\[K_X+\Delta_{1/2}=g_i^*H_{1/2} \text{ and } K_X+\Delta_1=g_i^*H_1.\]
If we put $H_t=(2t-1)H_1+2(1-t)H_{1/2}$, then 
\begin{align*}
K_X+\Delta_t&=K_X+f_*(\Theta+t(\Theta'-\Theta))\\
&=(2t-1)(K_X+\Delta_1)+2(1-t)(K_X+\Delta_{1/2})\\
&=g_i^*H_t 
\end{align*}
for all $t\in[0, 1]$. 
As $K_X+\Delta_0$ is semiample and $g_i$ is a contraction morphism, $H_0$ is also semiample. 
Therefore, there is a contraction morphism $f_{i, j}:X_i\to X_j$ and this is the required.

In the rest of the proof, in addition, suppose that $V$ spans $\NS(Z)$.

(3) Suppose that $\mathcal{C}$ spans $V$. 
We take $\Theta$ in the interior of $\mathcal{C}\cap\mathcal{A}_i$. 
Let $f:Z\to X$ be a minimal model of $K_Z+\Theta$. 
By Lemma \ref{bchm3.6.12}, we obtain that $\mathcal{W}_{f, A}(V)=\mathcal{C}_{f, A}(V)$ 
and so there is an index $1\leq j\leq l$ such that $f=f_j$ and $\Theta\in\mathcal{C}_j$. 
Then $\mathcal{A}_j$ intersects with $\mathcal{A}_j$ 
and so we obtain $i=j$. 
Thus $f_i$ is birational and $X_i$ is $\mathbb{Q}$-factorial.

Conversely, suppose that $f_i$ is birational and $X_i$ is $\mathbb{Q}$-factorial. 
Fix $\Theta\in\mathcal{A}_i$. 
We take any divisor $B\in V$ 
such that $K_{X_i}+{f_i}_*(\Theta+B)$ is ample, 
$f_i$ is $(K_Z+\Theta+B)$-non-positive and $\Theta+B\in\mathcal{B}_A(V)$. 
As $f_i$ is birational, $\Theta+B\in\mathcal{A}_i$. 
Then $\mathcal{C}_i$ spans $V$.

(4) If we put $f=f_i$ and $X=X_i$, 
then $f$ is a $\mathbb{Q}$-factorial weak log canonical model of $K_Z+\Theta$ by (3). 
Let $E_1, \cdots, E_k$ be all $f$-exceptional divisors. 
We take divisors $B_i\in V$ numerically equivalent to $E_i$. 
If we put $E_0=\sum E_i$ and $B_0=\sum B_i$, then $E_0$ and $B_0$ are numerically equivalent. 
Since $\Theta$ belongs to the interior of $\mathcal{B}_A(V)$, 
there is a positive constant $\delta>0$ 
such that $\Theta+\delta E_0$ and $\Theta+\delta B_0$ belongs to the interior of $\mathcal{B}_A(V)$. 
Then $f$ is $(K_Z+\Theta+\delta E_0)$-negative 
and so $f$ is a minimal model of $K_Z+\Theta+\delta E_0$ 
and $f_j$ is the ample model of $K_Z+\Theta+\delta E_0$ corresponding to $f$. 
Thus $f$ is also a minimal model of $K_Z+\Theta+\delta B_0$ 
and $f_j$ is also the ample model of $K_Z+\Theta+\delta B_0$ (cf.~\cite[Lemma 3.6.9]{bchm}). 
In particular, $\Theta+\delta B_0\in\mathcal{A}_j\cap\mathcal{C}_i$. 
As $\Theta$ is general in $\mathcal{A}_j\cap\mathcal{C}_i$, 
$f$ is a minimal model of $K_Z+\Theta$. 
In particular, $f$ is $(K_Z+\Theta)$-negative.

We take $\epsilon>0$ such that if $\Xi\in V$ with $||\Xi-\Theta||<\epsilon$, 
then $\Xi$ belongs to the interior of $\mathcal{B}_A(V)$ and $f$ is $(K_Z+\Xi)$-negative. 
Then, $\Xi\in\mathcal{C}_i$ if and only if $K_X+\Delta=f_*(K_Z+\Xi)$ is nef. 
For any $(a_1, \ldots, a_k)\in\mathbb{R}^k$, we put $E=\sum a_iE_i$ and $B=\sum a_iB_i$. 
Let $W$ be the affine subspace of $\WDiv_\mathbb{R}(X)$ 
given by pushing forward the element of $V$ by $f$. 
As $\Xi+B$ is numerically equivalent to $\Xi+E$, 
if $||B||<\epsilon$, then $K_X+\Delta\in\mathcal{N}_{f_*A}(W)$ if and only if $K_X+\Delta+f_*B\in\mathcal{N}_{f_*A}(W)$. 
This means that $\mathcal{C}_i$ is locally isomorphic 
to $\mathcal{N}_{f_*A}(W)\times\mathbb{R}^k$.

However, since $f_j$ is the ample model of $K_Z+\Theta$, 
there is $\epsilon>0$ sufficiently small such that $K_X+\Delta$ is nef 
if and only if $K_X+\Delta$ is nef over $X_j$ (cf.~\cite[Corollary 3.11.3]{bchm}). 
There is a surjective affine linear map from $W$ to $\WDiv_\mathbb{R}(X)$ modulo numerical equivalence over $X_j$ 
and this induces an isomorphism 
\[\mathcal{N}_{f_*A}(W)\simeq\NE(X/X_j)^*\times\mathbb{R}^l\]
in a neighbourhood of $f_*\Delta$.

In the rest of the proof, we prove $l=0$ for (4). 
Suppose that $l\geq1$. 
We take a sufficiently small number $0<\gamma<1$. 
Then we may find a divisor $D\in V$ on $Z$ such that $K_X+f_*\Theta\pm\gamma f_*D$ 
is numerically trivial over $X_j$ and $f_*\Theta\pm\gamma f_*D\in \mathcal{B}_{f_*A}(V)$. 
By the abundance theorem, $K_X+f_*\Theta\pm\gamma f_*D$ is $\mathbb{R}$-linearly trivial over $X_j$. 
Thus $f_j$ is the ample model of $K_Z+\Theta\pm\gamma D$. 
However, this contradicts the fact that $\mathcal{C}_i$ is locally isomorphic 
to $\mathcal{N}_{f_*A}(W)\times\mathbb{R}^k$. 
\end{proof}

The following assertion is the log canonical version of Theorem \ref{hm3.3}. 

\begin{thm}\label{hm3.3+}
Let $Z$ be a normal projective surface. 
Let $V$ be a finite dimensional affine subspace of $\WDiv_\mathbb{R}(Z)$, 
which is defined over the rationals. 
Let $A\geq0$ be an ample $\mathbb{Q}$-divisor on $Z$. 
Then there are finitely many contraction morphisms $f_i:Z\to X_i$ $(1\leq i\leq l)$ with the following properties:
\begin{itemize}
\item[(1)] and {\em(2)} are the same statement as in Theorem \ref{hm3.3}. 
\end{itemize}

In addition, suppose that $V$ spans $\NS(Z)$, 
\begin{itemize}
\item[(3)] We take $1\leq i\leq l$ such that a connected component $\mathcal{C}$ of $\mathcal{C}_i$ 
intersects the interior of $\mathcal{L}_A(V)$. 
Then the following are equivalent:
\begin{itemize}
\item[$\bullet$] $\mathcal{C}$ spnans $V$.
\item[$\bullet$] $f_i$ is special. 
\end{itemize}
\item[(4)] If $\mathcal{C}_i$ spans $V$ and $\Theta$ is a general point of $\mathcal{A}_j\cap\mathcal{C}_i$
which is also a point of the interior of $\mathcal{L}_A(V)$ for two indices $1\leq i, j\leq l$,
then $\mathcal{C}_i$ and $\NE(X_i/X_j)^*\times \mathbb{R}^k$ are locally isomorphic 
in a neighbourhood of $\Theta$ for some $k\geq0$. 
Moreover $\rho(X_i/X_j)=\dim\mathcal{C}_i-\dim\mathcal{C}_j\cap\mathcal{C}_i$.
\end{itemize}
\end{thm}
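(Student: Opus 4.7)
The plan is to mirror the proof of Theorem \ref{hm3.3} in its entirety, substituting at each step the log canonical analogue for the $\mathbb{Q}$-factorial ingredient. For parts (1) and (2), I would combine Theorem \ref{bchm3.11.2+}, Corollary \ref{partition of wlcm+}, and the uniqueness of ample models (Lemma \ref{ample-model}) to produce the finite partition by the $\mathcal{A}_i$'s and the rationality of each $\mathcal{C}_i$ when $f_i$ is birational. For (2), the convex combination argument of Theorem \ref{hm3.3}(2) carries over verbatim: choose $\Theta_t=\Theta+t(\Theta'-\Theta)$, use Corollary \ref{partition of wlcm+} to find a single special birational $f\colon Z\to X$ serving as a weak log canonical model for all $t\in[0,\delta]$, then apply the abundance theorem for log canonical surfaces to produce the induced contraction $g_i$ and the family of ample divisors $H_t$ that interpolate linearly, yielding the semiampleness of $H_0$ and hence the factorization $f_j=f_{i,j}\circ f_i$.

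For (3), the forward direction proceeds by picking $\Theta$ in the interior of $\mathcal{C}\cap\mathcal{A}_i$, taking a minimal model $f\colon Z\to X$ of $K_Z+\Theta$ (which exists by Theorem \ref{mmp}), and invoking Lemma \ref{special2} to conclude that $f$ is special. Lemma \ref{bchm3.6.12+} then yields $\mathcal{W}_{f,A}(V)=\mathcal{C}_{f,A}(V)$, so $f$ must appear in the finite list and $\mathcal{A}_i$ intersects $\mathcal{A}_j$, forcing $i=j$ and $f_i=f$ special. For the converse direction, assuming $f_i$ special, I would fix $\Theta\in\mathcal{A}_i$ and perturb by arbitrary $B\in V$ with sufficiently small coefficients; the special condition guarantees that ${f_i}_*B$ is $\mathbb{R}$-Cartier (precisely the point used in the proof of Lemma \ref{bchm3.6.12+}), so for $B$ making ${f_i}_*(K_Z+\Theta+B)$ ample and keeping $\Theta+B\in\mathcal{L}_A(V)$ we remain in $\mathcal{A}_i$, which forces $\mathcal{C}_i$ to span $V$.

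For (4), I would copy the strategy of Theorem \ref{hm3.3}(4) essentially unchanged: enumerate the $f_i$-exceptional divisors $E_1,\ldots,E_k$, pick $B_1,\ldots,B_k\in V$ numerically equivalent to them (using the assumption that $V$ spans $\NS(Z)$), and exploit $\Theta+\delta B_0\equiv \Theta+\delta E_0$ for small $\delta>0$ together with $(K_Z+\Theta)$-negativity on an open neighbourhood of $\Theta$ to identify a local neighbourhood of $\Theta$ in $\mathcal{C}_i$ with $\mathcal{N}_{{f_i}_*A}(W)\times\mathbb{R}^k$, where $W$ is the pushforward of $V$ under $f_i$. A further application of the log canonical abundance theorem over $X_j$ identifies the nef cone locally with $\NE(X_i/X_j)^*\times\mathbb{R}^l$ in a neighbourhood of ${f_i}_*\Theta$, and the same argument by contradiction (if $l\geq 1$, one finds $D\in V$ such that $f_j$ is the ample model of both $K_Z+\Theta+\gamma D$ and $K_Z+\Theta-\gamma D$, contradicting the local isomorphism with $\mathcal{N}_{{f_i}_*A}(W)\times\mathbb{R}^k$) forces $l=0$.

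The principal obstacle is controlling the pushforward of non-exceptional divisors along the special birational contraction $f_i$: one needs that ${f_i}_*B$ remain $\mathbb{R}$-Cartier for arbitrary $B\in V$, even though $X_i$ may carry non-$\mathbb{Q}$-factorial singularities. The definition of \emph{special} is engineered to supply exactly this: the non-$\mathbb{Q}$-factorial locus of $X_i$ lies inside the open subset $U_{X_i}$ where $f_i$ is an isomorphism, so $\mathbb{R}$-Cartierness of ${f_i}_*B$ is inherited locally from $Z$ there, while on the complement $X_i$ is $\mathbb{Q}$-factorial. This is the single point at which the log canonical argument differs from the $\mathbb{Q}$-factorial one, and once it is handled all the remaining polytope and cone manipulations go through as before.
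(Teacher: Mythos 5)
Your proposal is correct and follows essentially the same route as the paper: the paper's proof simply says that (1), (2) and (4) go through as in Theorem \ref{hm3.3} with the log canonical analogues (Theorem \ref{bchm3.11.2+}, Corollary \ref{partition of wlcm+}), and that (3) follows from Lemma \ref{special2} (minimal models are special) together with Lemma \ref{bchm3.6.12+}. Your identification of the single genuinely new point --- that specialness guarantees $\mathbb{R}$-Cartierness of pushforwards, replacing $\mathbb{Q}$-factoriality of the target --- is exactly the content the paper delegates to Lemma \ref{bchm3.6.12+}.
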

\begin{proof}
The proof of (1), (2) and (4) is similar to that of Theorem \ref{hm3.3}. 
If $f$ is a minimal model of $K_Z+\Theta$, 
then by Lemma \ref{special2} $f$ is special. 
Thus (3) follows from Lemma \ref{bchm3.6.12+}. 
\end{proof}

We recall the following Bertini-type statement for the reader's convenience. 

\begin{cor}\label{hm3.4}
Let $Z$, $V$ and $A$ 
be the same notation as in Theorem \ref{hm3.3} (resp.~Theorem \ref{hm3.3+}). 
If $V$ spans $\NS(Z)$, then there is a dense Zariski open subset $U$ of the Grassmannian $G(\alpha, V)$ 
of real vector subspaces of dimension $\alpha$ such that 
if $[W]\in U$ and it is defined over the rationals, 
then $W$ satisfies (1)-(4) of Theorem \ref{hm3.3} (resp.~Theorem \ref{hm3.3+}). 
\end{cor}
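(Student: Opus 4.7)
The plan is to exploit the observation that parts (1) and (2) of Theorem \ref{hm3.3} (resp.~Theorem \ref{hm3.3+}) do not invoke the hypothesis that the ambient affine subspace spans $\NS(Z)$; both conclusions hold for any finite-dimensional rational affine subspace. Consequently, once the theorem is applied with $W$ in place of $V$, properties (1) and (2) for $W$ are automatic for every rational $[W]\in G(\alpha,V)$, and the task reduces to finding a dense Zariski open subset of $G(\alpha,V)$ over which the spanning hypothesis used in (3) and (4) is inherited.

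To this end I introduce the natural linear map $\nu\colon V\to\NS(Z)_{\mathbb{R}}$ assigning to a divisor its numerical class, which is surjective by assumption. The condition that $\nu|_{W}$ remain surjective is the nonvanishing of a suitable collection of $\rho(Z)\times\rho(Z)$ minors of a matrix representing $\nu|_W$, hence cuts out a Zariski open subset $U\subset G(\alpha,V)$. Provided $\alpha\geq\rho(Z)$, $U$ is nonempty (a basis of $\NS(Z)_{\mathbb{R}}$ lifted to $V$ and padded with rational vectors to dimension $\alpha$ lies in $U$), and by irreducibility of the Grassmannian $U$ is dense. If $\alpha<\rho(Z)$, no subspace spans $\NS(Z)$ and the hypotheses of (3) and (4) are vacuous, so the statement holds trivially.

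For $[W]\in U$ defined over the rationals, $W$ is a rational finite-dimensional affine subspace of $\WDiv_{\mathbb{R}}(Z)$ that spans $\NS(Z)$, so Theorem \ref{hm3.3} (resp.~Theorem \ref{hm3.3+}) applied with $W$ replacing $V$ delivers a finite collection of contractions $\{f_i\colon Z\to X_i\}$ satisfying all of (1)--(4). No substantial obstacle is anticipated; the only care required is to phrase the Grassmannian open condition intrinsically, which the map $\nu$ accomplishes, and to note that the finite collection attached to $W$ is compatible with the one attached to $V$ via the uniqueness of ample models (Lemma \ref{ample-model}).
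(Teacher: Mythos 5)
There is a genuine gap, and it lies in your reading of what ``$W$ satisfies (1)--(4)'' means. You treat (3) and (4) as conditional statements whose hypothesis is ``the ambient subspace spans $\NS(Z)$,'' and you conclude that when $\alpha<\rho(Z)$ these are vacuously true for $W$. But that case is precisely the one the corollary is for: in the rest of the paper (the set-up before Lemma \ref{hm3.5}, and Lemmas \ref{hm3.5}, \ref{hm4.1}, Theorem \ref{hm3.7}) the corollary is invoked with a \emph{two-dimensional} $W$, and the arguments there use the \emph{conclusions} of (3) and (4) with real content --- e.g.\ ``$\mathcal{C}_{A,f}$ is two dimensional, hence $f$ is birational and $X$ is $\mathbb{Q}$-factorial,'' and the dimension formula $\rho(X_i/X_j)=\dim\mathcal{C}_i-\dim\mathcal{C}_i\cap\mathcal{C}_j$. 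Under your reading the corollary would be useless and the subsequent proofs would collapse. The intended meaning (as in Hacon--M\textsuperscript{c}Kernan, Corollary 3.3, which is all the paper cites for this) is that for a \emph{general} $\alpha$-dimensional $W\subset V$ the conclusions of (1)--(4) hold with $V$ replaced by $W$ everywhere --- spanning $V$ replaced by spanning $W$, $\mathcal{B}_A(V)$ by $\mathcal{B}_A(W)$, the polytopes $\mathcal{C}_i$ by their slices $\mathcal{C}_i\cap W_A$ --- even though $W$ does not span $\NS(Z)$. This is a genuine Bertini-type assertion, not an instance of the theorem.

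Consequently the open set you construct (surjectivity of $\nu|_W$) is not the relevant one, and for $\alpha<\rho(Z)$ it is empty anyway. What is actually needed is transversality of $W$ to the finitely many rational polytopes $\mathcal{C}_i$ produced by Theorem \ref{hm3.3} and to all of their faces (together with genericity relative to the finitely many local product decompositions appearing in (4)), so that for $[W]$ in the resulting dense open subset of $G(\alpha,V)$ one has $\dim(\mathcal{C}_i\cap W_A)=\dim\mathcal{C}_i-(\dim V-\alpha)$ whenever the intersection meets the relevant interior, relative interiors restrict to relative interiors, ``$\mathcal{C}_i\cap W_A$ spans $W_A$'' is equivalent to ``$\mathcal{C}_i$ spans $V_A$,'' and the local isomorphism of (4) slices to the corresponding isomorphism with the correct $k$. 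Since there are only finitely many polytopes and faces, these conditions do cut out a dense Zariski open subset of the Grassmannian; but this dimension-counting argument is the substance of the proof and is absent from your proposal. (Your remark that (1) and (2) need no spanning hypothesis is correct and unproblematic.)
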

\begin{proof}
See \cite[Corollary 3.3]{hacon-mackernan}. 
\end{proof}

For the rest of this section, we assume that 
$V$ has two dimensional and satisfies (1)-(4) of Theorem \ref{hm3.3} (resp.~Theorem \ref{hm3.3+}).

The following lemma is a key observation of the wall-crossing of ample models. 

\begin{lem}{\em(cf.~\cite[Lemma 3.5]{hacon-mackernan})}\label{hm3.5}
Let $Z$, $V$ and $A$ be the same notation as in Theorem \ref{hm3.3}. 
Let $f:Z\to X$ and $g:Z\to Y$ be two contraction morphisms such that 
$\mathcal{C}_{A, f}$ has two dimensional 
and $\mathcal{O}=\mathcal{C}_{A, f}\cap\mathcal{C}_{A, g}$ has one dimensional. 
Assume that $\rho(X)\geq\rho(Y)$ and that $\mathcal{O}$ 
is not contained in the boundary of $\mathcal{B}_A(V)$. 
Let $\Theta$ be a point belonging to the relative interior of $\mathcal{O}$ 
and let $\Delta=f_*\Theta$.

Then there is a contraction morphism $\pi:X\to Y$ 
such that $g=\pi\circ f$, $\rho(X)=\rho(Y)+1$, 
$\pi$ is a $(K_X+\Delta)$-trivial morphism 
and either
\begin{itemize}
\item[(1)] $\pi$ is a divisorial contraction, $\mathcal{O}\neq\mathcal{C}_{A, g}$ 
and $\mathcal{O}$ is not contained in the boundary of $\mathcal{E}_A(V)$, or
\item[(2)] $\pi$ is a Mori fiber space and $\mathcal{O}=\mathcal{C}_{A, g}$ 
is contained in the boundary of $\mathcal{E}_A(V)$. 
\end{itemize}
\begin{center}
{\em{(1)}}
\begin{tikzpicture}[thick]
\draw (0, 0) -- (100: 2); 
\draw (0, 0) -- (-10: 3)node[right]{$\mathcal{O}$}; 
\draw (0, 0) -- (-30: 3)[ultra thick]node[right]{$\partial\mathcal{E}_A(V)$}; 
\draw (3/2, 1) node{$\mathcal{C}_{A, f}$}; 
\draw (5/2, -0.8) node{$\mathcal{C}_{A, g}$}; 
\end{tikzpicture}
{\em{(2)}}
\begin{tikzpicture}[thick]
\draw (0, 0) -- (100: 2); 
\draw (0, 0) -- (-10: 3)[ultra thick]node[right]{$\mathcal{O}=\mathcal{C}_{A, g}$}; 
\draw (3/2, 0.9) node{$\mathcal{C}_{A, f}$}; 
\end{tikzpicture}
\end{center}
\end{lem}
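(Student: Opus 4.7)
The plan is to first establish the structure of $f$, then produce $\pi\colon X\to Y$ via the abundance theorem, and finally read off its type from $\dim\mathcal{C}_{A,g}$. I would begin by observing that $\mathcal{C}_{A,f}$ has dimension $2=\dim V$, hence spans $V$, and since $\mathcal{O}\subset\mathcal{C}_{A,f}$ lies in the interior of $\mathcal{B}_A(V)$ by hypothesis, $\mathcal{C}_{A,f}$ meets that interior. Theorem~\ref{hm3.3}(3) then gives that $f$ is birational and $X$ is $\mathbb{Q}$-factorial.

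Next, since $\Theta\in\mathcal{C}_{A,f}$, the divisor $K_X+\Delta=f_*(K_Z+\Theta)$ is nef on $X$, so by the abundance theorem there is a contraction $\pi\colon X\to Y'$ with $K_X+\Delta\sim_{\mathbb{R}}\pi^*H$ for some ample $H$ on $Y'$. In particular $\pi$ is $(K_X+\Delta)$-trivial, and the composition $h=\pi\circ f$ is the ample model of $K_Z+\Theta$ by Lemma~\ref{ample-model}(2). To identify $Y'$ with $Y$, I would use $\Theta\in\mathcal{C}_{A,g}$ together with Theorem~\ref{hm3.3}(2) to produce a contraction $\alpha\colon Y\to Y'$ with $h=\alpha\circ g$, and then apply Theorem~\ref{hm3.3}(4) to obtain $\rho(X/Y')=\dim\mathcal{C}_{A,f}-\dim(\mathcal{C}_{A,f}\cap\mathcal{C}_{A,h})=2-1=1$, so that $\rho(Y')=\rho(X)-1$. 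Combining the hypothesis $\rho(X)\geq\rho(Y)$ with $\rho(Y)\geq\rho(Y')=\rho(X)-1$ and the surface-specific absence of flops, $\alpha$ must be an isomorphism, so $Y=Y'$ and $\rho(X)=\rho(Y)+1$.

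Finally, the two cases are distinguished by $\dim\mathcal{C}_{A,g}$. If $\mathcal{C}_{A,g}$ is $2$-dimensional, Theorem~\ref{hm3.3}(3) shows that $Y$ is $\mathbb{Q}$-factorial, so $\pi$ is a birational extremal contraction between $\mathbb{Q}$-factorial surfaces and therefore divisorial, establishing case~(1); the fact that $\mathcal{O}$ lies in the interior of $\mathcal{E}_A(V)$ follows because $\mathcal{C}_{A,g}$ extends past $\mathcal{O}$ on the opposite side from $\mathcal{C}_{A,f}$. If $\mathcal{C}_{A,g}=\mathcal{O}$ is $1$-dimensional, then (3) gives that $g$ is not birational, so $\dim Y\leq 1$ and $\pi$ is a Mori fiber space, establishing case~(2); and $\mathcal{O}$ must lie on the boundary of $\mathcal{E}_A(V)$, since otherwise $\mathcal{C}_{A,g}$ would extend to a $2$-dimensional chamber. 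The hardest step is the identification $Y'=Y$ in the previous paragraph: this relies on the hypothesis $\rho(X)\geq\rho(Y)$ together with the surface-specific non-existence of flops to rule out a configuration in which $X$ and $Y$ would be distinct $\mathbb{Q}$-factorial models over a common third model $Y'$ of strictly smaller Picard number.
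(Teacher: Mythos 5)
Your overall architecture is the same as the paper's: Theorem \ref{hm3.3}(3) gives that $f$ is birational with $X$ $\mathbb{Q}$-factorial; abundance produces the $(K_X+\Delta)$-trivial contraction $\pi$ through the ample model $h:Z\to W$ of $K_Z+\Theta$; Theorem \ref{hm3.3}(4) gives $\rho(X/W)=1$; and the configuration $\rho(X)=\rho(Y)$ with both $X$ and $Y$ mapping to $W$ with relative Picard number one is excluded exactly as in the paper (a birational morphism of surfaces with $\rho=1$ relatively must contract a curve, hence be divisorial, which is incompatible with $\mathcal{C}_{A,h}$ being one-dimensional; and it cannot be a fibration since $\mathcal{O}\not\subset\partial\mathcal{E}_A(V)$ in that branch). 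You correctly identify this ``no flops on surfaces'' step as the crux.

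The one genuine gap is in your treatment of the case $\mathcal{C}_{A,g}=\mathcal{O}$. You write that (3) ``gives that $g$ is not birational,'' but (3) only yields the disjunction ``$g$ is not birational \emph{or} $Y$ is not $\mathbb{Q}$-factorial,'' so the branch where $g$ is birational onto a non-$\mathbb{Q}$-factorial surface is not excluded by (3) alone. Likewise, your justification that $\mathcal{O}$ must then lie on $\partial\mathcal{E}_A(V)$ --- ``since otherwise $\mathcal{C}_{A,g}$ would extend to a $2$-dimensional chamber'' --- is the desired conclusion restated, not an argument. The missing input is Lemma \ref{q-fac1} (cf.\ Remark \ref{q-fac2}): since $A$ is general ample one may take $\Theta\in[0,1)$, and then any $(K_Z+\Theta)$-non-positive birational contraction from $Z$, in particular the ample model of a \emph{big} $K_Z+\Theta$, has $\mathbb{Q}$-factorial target. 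This is what the paper uses: if $\mathcal{O}\not\subset\partial\mathcal{E}_A(V)$ then $K_Z+\Theta$ is big (as $\Theta$ is interior to $\mathcal{B}_A(V)$), so $g=h$ would be birational with $\mathbb{Q}$-factorial target, and (3) would force $\dim\mathcal{C}_{A,g}=2$, a contradiction; this simultaneously rules out the ``birational but non-$\mathbb{Q}$-factorial'' branch and places $\mathcal{O}$ on the pseudo-effective boundary, after which the Mori fiber space conclusion follows as you say. With that lemma inserted your case analysis closes.
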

\begin{proof}
By Theorem \ref{hm3.3}, $f$ is birational and $X$ is $\mathbb{Q}$-factorial. 
Let $h:Z\to W$ be the ample model of $K_Z+\Theta$. 
If $\Theta$ belongs to the boundary of $\mathcal{E}_A(V)$,  
then $K_Z+\Theta$ is not big. 
This is because by assumption $\Theta$ is not a point of the boundary of $\mathcal{B}_A(V)$. 
Then $h$ is not birational. 
If $\mathcal{O}$ is contained in the boundary of $\mathcal{E}_A(V)$, 
then $\mathcal{C}_{A, g}$ is one dimensional and so $\mathcal{O}=\mathcal{C}_{A, g}$. 
Thus by Theorem \ref{hm3.3} $\pi$ has a Mori fiber space structure. 
If $\mathcal{O}$ is not contained in the boundary of $\mathcal{E}_A(V)$ 
and $\mathcal{C}_{A, g}$ is one dimensional, 
then $\mathcal{O}=\mathcal{C}_{A, g}$ and so we obtain $K_Z+\Theta$ is big. 
Therefore, $\pi$ is a divisorial contraction 
and so $g$ is birational and $Y$ is $\mathbb{Q}$-factorial. 
Hence we obtain that $\mathcal{C}_{A, g}$ is two dimensional. 
However, this is a contradiction. 
Thus we may assume that $\mathcal{C}_{A, g}$ is two dimensional. 
Then we have $\mathcal{O}\neq\mathcal{C}_{A, g}$ 
and $\mathcal{O}$ is not contained in the boundary of $\mathcal{E}_A(V)$.

As $\mathcal{O}$ is a subset of both $\mathcal{C}_{A, f}$ and $\mathcal{C}_{A, g}$, 
there are two contraction morphisms $p:X\to W$ and $q:Y\to W$ 
of relative Picard number at most one by Theorem \ref{hm3.3}. 
Therefore, there are only two possibilities: 
\begin{itemize}
\item[(1)] $\rho(X)=\rho(Y)+1$, or
\item[(2)]  $\rho(X)=\rho(Y)$. 
\end{itemize}

We consider Case (1): $\rho(X)=\rho(Y)+1$. 
Then $h=g$ and $\pi=p:X\to Y$ is a contraction morphism such that $g=\pi\circ f$. 
As $g=h$ is birational, $\pi$ is a divisorial contraction.

We consider Case (2): $\rho(X)=\rho(Y)$. 
Then $\rho(X/W)=\rho(Y/W)=1$. 
Since $\mathcal{O}$ has one dimensional, $\mathcal{C}_{A, h}$ has one dimensional 
and so by (3) of Theorem \ref{hm3.3} $h$ is not birational or $W$ is not $\mathbb{Q}$-factorial. 
Thus $p$ and $q$ are not a divisorial contraction. 
Since $\mathcal{O}$ is not contained in the boundary of $\mathcal{E}_A(V)$, 
$p$ and $q$ are not Mori fiber spaces. 
Therefore, this case does not occur. 
\end{proof}

The following assertion is the log canonical version of Lemma \ref{hm3.5}. 

\begin{lem}\label{hm3.5+}
Let $(Z, \Theta)$, $V$ and $A$ be the same notation as in Theorem \ref{hm3.3+}. 
Let $f:Z\to X$ and $g:Z\to Y$ be two contraction morphisms such that 
$\mathcal{C}_{A, f}$ has two dimensional 
and $\mathcal{O}=\mathcal{C}_{A, f}\cap\mathcal{C}_{A, g}$ has one dimensional. 
Assume that $\rho(X)\geq\rho(Y)$ and that $\mathcal{O}$ 
is not contained in the boundary of $\mathcal{L}_A(V)$. 
Let $\Theta$ be a point belonging to the relative interior of $\mathcal{O}$ 
and let $\Delta=f_*\Theta$.

Then there is a contraction morphism $\pi:X\to Y$ 
such that $g=\pi\circ f$, $\rho(X)=\rho(Y)+1$, $\pi$ is a $(K_X+\Delta)$-trivial morphism 
and either
\begin{itemize}
\item[(1)] $\pi$ is a divisorial contraction, $\mathcal{O}\neq\mathcal{C}_{A, g}$ 
and $\mathcal{O}$ is not contained in the boundary of $\mathcal{E}_A(V)$, or
\item[(2)] $\pi$ is a Mori fiber space and $\mathcal{O}=\mathcal{C}_{A, g}$ 
is contained in the boundary of $\mathcal{E}_A(V)$. 
\end{itemize}
\end{lem}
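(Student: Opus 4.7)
The plan is to mirror the proof of Lemma \ref{hm3.5} essentially verbatim, making three substitutions throughout: appeals to Theorem \ref{hm3.3} become appeals to Theorem \ref{hm3.3+}, the conclusion ``$X$ is $\mathbb{Q}$-factorial'' is replaced by ``$f$ is special'' (which still implies $f$ is birational by Definition \ref{special}), and Lemma \ref{bchm3.6.12+} is used in place of Lemma \ref{bchm3.6.12}. Since the hypotheses are now phrased in terms of $\mathcal{L}_A(V)$ rather than $\mathcal{B}_A(V)$, the geometric input about where $\mathcal{O}$ sits translates directly.

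Concretely, I would begin by invoking Theorem \ref{hm3.3+}(3): because $\mathcal{C}_{A,f}$ is two dimensional and intersects the interior of $\mathcal{L}_A(V)$, it spans $V$, so $f:Z\to X$ is special. Next I would let $h:Z\to W$ be the ample model of $K_Z+\Theta$ and split into cases based on whether $\mathcal{O}$ lies in the boundary of $\mathcal{E}_A(V)$. If it does, then $K_Z+\Theta$ fails to be big (using that $\Theta$ is in the interior of $\mathcal{L}_A(V)$, so the defect must come from the pseudo-effective boundary), forcing $h$ to be non-birational and thereby placing us in case (2) of the lemma with $\mathcal{O}=\mathcal{C}_{A,g}$ and $\pi$ a Mori fiber space. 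Otherwise, a dimension count on $\mathcal{C}_{A,g}$ puts us in case (1), with $\mathcal{O}\neq\mathcal{C}_{A,g}$ and $\pi$ a divisorial contraction.

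For the construction of $\pi$ itself: since $\mathcal{O}\subset\mathcal{C}_{A,f}\cap\mathcal{C}_{A,g}$, Theorem \ref{hm3.3+}(2) yields contractions $p:X\to W$ and $q:Y\to W$ of relative Picard number at most one. Then I would dichotomize: if $\rho(X)=\rho(Y)+1$, I set $\pi=p$ and verify the required properties using the description of $\pi$ via $K_X+\Delta$; if instead $\rho(X)=\rho(Y)$, I derive a contradiction by noting that $\mathcal{C}_{A,h}$ is one dimensional (so by Theorem \ref{hm3.3+}(3) $h$ is not special), which forces $p$ and $q$ to be neither divisorial contractions nor Mori fiber spaces.

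The main obstacle I anticipate is ensuring that specialness propagates cleanly through the argument — in particular, when constructing $\pi:X\to Y$ one must check that the contraction behaves correctly even though $Y$ need not be globally $\mathbb{Q}$-factorial. This is where Definition \ref{special} earns its keep: all non-$\mathbb{Q}$-factorial singularities are confined to a fixed Zariski open set that is preserved under the composition $g=\pi\circ f$, so the local-to-global comparisons of Picard numbers and of Cartier properties of exceptional divisors remain valid. Once this bookkeeping is discharged, the remainder of the argument is a faithful translation of the $\mathbb{Q}$-factorial proof.
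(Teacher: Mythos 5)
Your proposal is correct and follows essentially the same route as the paper, which likewise proves the lemma by rerunning the proof of Lemma \ref{hm3.5} with Theorem \ref{hm3.3+} in place of Theorem \ref{hm3.3} and ``special'' in place of ``$\mathbb{Q}$-factorial.'' The one ingredient the paper makes explicit that you only gesture at is Theorem \ref{special1} (every divisorial contraction of a log canonical surface is special), which is exactly what lets specialness propagate through the dichotomy on $\rho(X)-\rho(Y)$; citing it would close the bookkeeping you flag at the end.
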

\begin{proof}
We note that by Theorem \ref{special1} a divisorial contraction is special. 
Then we can use Theorem \ref{hm3.3+} instead of Theorem \ref{hm3.3}. 
Therefore, this statement is proved similarly to Lemma \ref{hm3.5}. 
\end{proof}

\begin{lem}\label{hm3.6}
Let $f:W\to X$ be a birational contraction of projective $\mathbb{Q}$-factorial surfaces. 
Let $(W, \Theta)$ and $(W, \Phi)$ be log surfaces. 
If $f$ is the ample model of $K_W+\Theta$ and $\Theta-\Phi$ is ample, 
then $f$ is the result of running the $(K_W+\Phi)$-minimal model program. 
\end{lem}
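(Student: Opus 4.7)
The plan is to proceed by induction on the relative Picard number $\rho(W/X)$. If $\rho(W/X)=0$ then $f$ is an isomorphism and there is nothing to prove, so assume $\rho(W/X)\geq 1$. For the inductive step, I would run the $(K_W+\Phi)$-minimal model program with scaling of the ample divisor $A:=\Theta-\Phi$, extract a single $(K_W+\Phi)$-negative divisorial contraction $f_1\colon W\to W_1$ through which $f$ factors as $f=f'\circ f_1$, and then apply the induction hypothesis to $f'$.

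The crux will be to identify the scaling threshold
\[
t_0:=\inf\{\, t\geq 0 : K_W+\Phi+tA \text{ is nef on } W\,\}
\]
together with an extremal ray $R_0$ satisfying $(K_W+\Phi+t_0A)\cdot R_0=0$ and $(K_W+\Phi)\cdot R_0<0$ (this exists by Theorem \ref{mmp} provided $K_W+\Phi$ is not nef), and then to check that $R_0$ is contracted by $f$. Because $f$ is the ample model of $K_W+\Theta$ and $f$ is birational, the argument of Lemma \ref{bchm3.6.12} lets me write
\[
K_W+\Theta\sim_{\mathbb{R}} f^*A_X+E,\qquad A_X:=f_*(K_W+\Theta),
\]
with $A_X$ ample on $X$ and $E\geq 0$ effective and $f$-exceptional. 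The key dichotomy is: if $E\neq 0$, negative-definiteness of the intersection form on $\Exc(f)$ yields $E^2<0$, forcing some component $C\subset\Supp(E)$ with $E\cdot C<0$, so $K_W+\Theta$ is not nef and $t_0>1$; if $E=0$, then $K_W+\Theta=f^*A_X$ is nef while $(K_W+\Theta-\varepsilon A)\cdot C<0$ for any $f$-exceptional $C$ and any $\varepsilon>0$, so $t_0=1$. In either sub-case $(K_W+\Theta)\cdot R_0=(1-t_0)(A\cdot R_0)\leq 0$, and combined with the decomposition above (with $f^*A_X$ nef and $E$ effective $f$-exceptional) this forces $R_0\subset\Exc(f)$. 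Along the way one sees $K_W+\Phi$ is not nef on the same exceptional curve $C$, since $(K_W+\Phi)\cdot C=E\cdot C-A\cdot C<0$, ensuring the MMP step actually exists.

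To close the induction I would verify three standard bookkeeping items: (a) $W_1$ remains $\mathbb{Q}$-factorial by Lemma \ref{q-fac1}; (b) $A_1:=f_{1*}A=f_{1*}\Theta-f_{1*}\Phi$ stays ample on $W_1$, since for any curve $C'\subset W_1$ the projection formula gives $A_1\cdot C'=A\cdot f_1^*C'>0$ (because $f_1^*C'$ is effective for an extremal divisorial contraction of a $\mathbb{Q}$-factorial surface), and $A_1^2>0$ follows similarly; (c) $f'$ is the ample model of $K_{W_1}+f_{1*}\Theta$, which follows from pushing forward $K_W+\Theta\sim_{\mathbb{R}} f_1^*(f')^*A_X+E$ to obtain $K_{W_1}+f_{1*}\Theta\sim_{\mathbb{R}} (f')^*A_X+f_{1*}E$, and transferring the fixed-part property from $E$ to $f_{1*}E$ by noting that any $B_1\in|K_{W_1}+f_{1*}\Theta|_{\mathbb{R}}$ pulls back to $f_1^*B_1+\widetilde{E}_1\in|K_W+\Theta|_{\mathbb{R}}$ (where $\widetilde{E}_1$ is the $f_1$-exceptional defect coming from $(K_W+\Theta)$-non-positivity of $f_1$), hence is $\geq E$, and then pushing forward gives $B_1\geq f_{1*}E$.

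The main obstacle is the uniform case analysis that yields $R_0\subset\Exc(f)$: everything hinges on the ample-model decomposition $K_W+\Theta=f^*A_X+E$, and one must carefully split on whether $E$ vanishes, since the threshold $t_0$ takes different values and the route to $R_0\subset\Exc(f)$ runs differently in each case. The rest is routine projection-formula bookkeeping for extremal divisorial contractions of $\mathbb{Q}$-factorial surfaces.
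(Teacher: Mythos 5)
Your argument is correct, and at bottom it runs the same engine as the paper --- the $(K_W+\Phi)$-minimal model program with scaling of the ample divisor $\Theta-\Phi$ --- but the execution is genuinely different. The paper's proof is a short perturbation argument: it writes $\Theta-\Phi\sim_{\mathbb{R}}tH$ with $H$ ample, $\Phi+H\in[0,1]$ and $K_W+\Phi+H$ ample, observes that $f$, being the ample model of $K_W+\Phi+tH$, is $(K_W+\Phi+sH)$-negative for $s<t$ close to $t$ and hence is \emph{the} (unique) minimal model of $K_W+\Phi+sH$, and concludes that the MMP with scaling of $H$ reproduces $f$ wholesale once the scalar reaches $s$. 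You instead re-derive the relevant piece of the scaling MMP by hand: induction on $\rho(W/X)$, explicit identification of the extremal ray $R_0$ at the nef threshold via the decomposition $K_W+\Theta\sim_{\mathbb{R}}f^*A_X+E$, and the dichotomy $E=0$ versus $E\neq0$. This is longer but self-contained and avoids invoking uniqueness of minimal models. Three small points to tighten: (i) you should say why the contraction of $R_0$ is divisorial rather than of fiber type --- this follows because you have already shown $R_0\subset\Exc(f)$ with $f$ birational, so the generating curve has negative self-intersection; (ii) for $\mathbb{Q}$-factoriality of $W_1$ the clean reference is Remark \ref{output} (outputs of the MMP in the $\mathbb{Q}$-factorial case stay $\mathbb{Q}$-factorial), since Lemma \ref{q-fac1} as stated requires $\Phi\in[0,1)$, which is not assumed here; (iii) your verification that $f_{1*}A$ is ample uses the Nakai--Moishezon criterion for $\mathbb{R}$-divisors (Campana--Peternell), which is fine on surfaces but should be flagged. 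With these noted, the induction closes as you describe.
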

\begin{proof}
We take an ample divisor $H$ on $W$ 
such that $\Phi+H\in[0, 1]$, 
$K_W+\Phi+H$ is ample 
and $tH\sim_\mathbb{R}\Theta-\Phi$ 
for some positive real number $0<t<1$. 
Then $f$ is the ample model of $K_W+\Phi+tH$. 
We take any $s<t$ sufficiently close to $t$. 
Since $f$ is $(K_W+\Phi+tH)$-non-positive and $H$ is ample, $f$ is $(K_W+\Phi+sH)$-negative. 
Then $f$ is the unique minimal model of $K_W+\Phi+sH$. 
In particular, if we run the $(K_W+\Phi+sH)$-minimal model program with scaling of $H$ 
and the value of the scalar is $s$, 
then the induced morphism is $f$. 
\end{proof}

The following assertion is the log canonical version of Lemma \ref{hm3.6}. 

\begin{lem}\label{hm3.6+}
Let $f:W\to X$ be a special birational contraction of normal projective surfaces. 
Let $(W, \Theta)$ and $(W, \Phi)$ be log canonical. 
If $f$ is the ample model of $K_W+\Theta$ and $\Theta-\Phi$ is ample, 
then $f$ is the result of running the $(K_W+\Phi)$-minimal model program. 
\end{lem}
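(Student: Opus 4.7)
The plan is to mirror the proof of Lemma~\ref{hm3.6}, replacing the condition $\Phi+H\in[0,1]$ by log canonicity of $(W,\Phi+H)$ and invoking the log canonical minimal model program (Theorem~\ref{mmp}(2)); the specialness of $f$ will supply the $\mathbb{R}$-Cartier properties needed to push divisors down to $X$.

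First I would choose a general ample $\mathbb{Q}$-divisor $H$ on $W$ together with a real number $0<t<1$ such that $tH\sim_\mathbb{R}\Theta-\Phi$, $(W,\Phi+H)$ is log canonical, and $K_W+\Phi+H$ is ample. To see such $H$ exists, since $\Theta-\Phi$ is ample I take $t$ close to $1$ and let $H$ be a general ample representative of the $\mathbb{R}$-linear class $(\Theta-\Phi)/t$; then $(W,\Phi+H)$ is a small perturbation of the log canonical pair $(W,\Theta)$ and hence itself log canonical, while $K_W+\Phi+H$ is an ample perturbation of the nef divisor $K_W+\Theta$. Because $K_W+\Phi+tH\sim_\mathbb{R} K_W+\Theta$, the morphism $f$ is then the ample model of $K_W+\Phi+tH$. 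For $s\in(0,t)$ sufficiently close to $t$, the pair $(W,\Phi+sH)$ remains log canonical, as it is a convex combination of $(W,\Phi)$ and $(W,\Phi+H)$.

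Next, exactly as in the proof of Lemma~\ref{hm3.6}, the $(K_W+\Phi+tH)$-non-positivity of $f$ together with the ampleness of $H$ forces $f$ to be $(K_W+\Phi+sH)$-negative: the divisor $f^*f_*H-H$ is $f$-exceptional with strictly negative intersection with every $f$-exceptional curve (since $H$ is ample), so by the negativity lemma it is effective with support equal to $\Exc(f)$, and adding a positive multiple of this to the effective exceptional divisor $(K_W+\Phi+tH)-f^*f_*(K_W+\Phi+tH)$ yields an effective exceptional divisor whose support is all of $\Exc(f)$. Because $f$ is special, $f_*(\Phi+sH)$ is $\mathbb{R}$-Cartier by the local argument used in the proof of Lemma~\ref{bchm3.6.12+}, and $K_X+f_*(\Phi+sH)$ is a small ample perturbation of $f_*(K_W+\Theta)$, hence ample. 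Thus $f$ is the unique minimal model of $K_W+\Phi+sH$, and running the log canonical $(K_W+\Phi)$-MMP with scaling of $H$ and stopping when the scalar reaches $s$ must produce $f$.

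The main obstacle I expect is arranging log canonicity of $(W,\Phi+H)$ simultaneously with the ampleness of $K_W+\Phi+H$ and the relation $tH\sim_\mathbb{R}\Theta-\Phi$; this will be handled by taking $H$ sufficiently general and $t$ sufficiently close to $1$, while the specialness of $f$ keeps the relevant pushforwards $\mathbb{R}$-Cartier so that the MMP stays within the log canonical category.
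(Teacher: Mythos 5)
Your proposal is correct and follows essentially the same route as the paper: the paper's proof of Lemma \ref{hm3.6+} simply observes that specialness makes the relevant pushforwards $\mathbb{R}$-Cartier (as in the proof of Lemma \ref{bchm3.6.12+}) and then repeats the argument of Lemma \ref{hm3.6}, with the condition $\Phi+H\in[0,1]$ replaced by log canonicity of the perturbed pair, exactly as you do. Your write-up is in fact somewhat more detailed than the paper's (e.g.\ the negativity-lemma justification that $f$ becomes $(K_W+\Phi+sH)$-negative), but there is no substantive difference.
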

\begin{proof}
We have already seen that $f_*(K_W+\Phi+tH)$ 
is $\mathbb{R}$-Cartier in  the proof of Lemma \ref{bchm3.6.12+}. 
Thus this statement is proved similarly to Lemma \ref{hm3.6}. 
\end{proof}

We now adopt additional notation for the rest of this section. 
A point $\Theta=A+B$ is contained in the boundary of $\mathcal{E}_A(V)$ 
and the interior of $\mathcal{B}_A(V)$ (resp.~$\mathcal{L}_A(V)$). 
If $\Theta$ is contained in only one polytope, 
then we assume that it is a zero dimensional face of $\mathcal{E}_A(V)$ (see the following figure). 
Let $\mathcal{T}_1, \cdots, \mathcal{T}_k$ be all two dimensional 
rational polytopes $\mathcal{C}_i$ containing $\Theta$. 
Possibly re-ordering, we may assume that the intersections 
$\mathcal{O}_0$ and $\mathcal{O}_k$ of $\mathcal{T}_0$ and $\mathcal{T}_k$ 
with the boundary of $\mathcal{E}_A(V)$ 
and $\mathcal{O}_i=\mathcal{T}_i\cap\mathcal{T}_{i+1}$ $(1\leq i\leq k-1)$ 
and all $\mathcal{O}_i$ have one dimensional. 
Let $f_i:Z\to X_i$ be contraction morphism associated to $\mathcal{T}_i$ 
and let $g_i:Z\to S_i$ be contraction morphisms associated to $\mathcal{O}_i$. 
We put $f=f_1:Z\to X=X_1$, $g=f_k:Z\to Y=X_k$, $X'=X_2$ and $Y'=X_{k-1}$. 
Let $\phi:X\to S=S_0$ and $\psi:Y\to T=S_k$ be the induced morphism 
and let $h:Z\to R$ be the ample model $K_Z+\Theta$. 

\begin{multicols}{2}
\begin{center}
\begin{tikzpicture}[thick]
\draw (0, 0)node[below]{$\Theta$} -- (120: 3.5)[ultra thick]node[above]{$\mathcal{O}_1$}; 
\draw (0, 0) -- (-10: 4)[ultra thick]node[right]{$\mathcal{O}_0$}; 
\draw (1.5, 1.5) node{$\mathcal{T}_1$}; 
\end{tikzpicture}
\[k=1\]

\begin{tikzpicture}[thick]
\draw (0, 0)node[below]{$\Theta$} -- (120: 3.5)[ultra thick]node[above]{$\mathcal{O}_k$}; 
\draw (0, 0)node[below]{$\Theta$} -- (95: 3)node[above]{$\mathcal{O}_{k-1}$}; 
\draw (0, 0)node[below]{$\Theta$} -- (70: 3); 
\draw (0, 0)node[below]{$\Theta$} -- (35: 3.5); 
\draw (0, 0)node[below]{$\Theta$} -- (10: 3.5)node[right]{$\mathcal{O}_1$}; 
\draw (0, 0) -- (-10: 4)[ultra thick]node[right]{$\mathcal{O}_0$}; 
\draw (3, 0) node{$\mathcal{T}_1$}; 
\draw (2.5, 1) node{$\mathcal{T}_2$}; 
\draw (1.5, 2) node{$\cdots$}; 
\draw (0.3, 2.2) node{$\mathcal{T}_{k-1}$}; 
\draw (-0.8, 2.2) node{$\mathcal{T}_k$}; 
\end{tikzpicture}
\[k\geq2\]
\end{center}
\end{multicols}

\begin{thm}\label{hm3.7}
Let $(Z, \Phi)$ be a $\mathbb{Q}$-factorial log surface. 
Suppose that $\Theta-\Phi$ is ample.

Then $\phi$ and $\psi$ are Mori fiber spaces 
which are outputs of the $(K_Z+\Phi)$-minimal model progaram. 
Moreover $\phi$ and $\psi$ are connected by Sarkisov links,
where each $f_i$ is the result of running the $(K_Z+\Phi)$-minimal model program. 
\end{thm}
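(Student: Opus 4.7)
The plan is to combine the wall-crossing analysis of Lemma~\ref{hm3.5} with the MMP-recovery principle of Lemma~\ref{hm3.6}, the latter being applicable precisely because $\Theta-\Phi$ is ample.

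I begin by applying Lemma~\ref{hm3.5} wall-by-wall in the fan of two-dimensional chambers around $\Theta$. The boundary walls $\mathcal{O}_0$ and $\mathcal{O}_k$ are contained in $\partial\mathcal{E}_A(V)$, so case~(2) of Lemma~\ref{hm3.5} applies and gives that the induced morphisms $\phi:X\to S$ and $\psi:Y\to T$ are Mori fiber spaces. For each interior wall $\mathcal{O}_i$ with $1\le i\le k-1$, case~(1) applies and produces a divisorial contraction between $X_i$ and $X_{i+1}$ in the direction dictated by relative Picard numbers. This yields the geometric skeleton connecting $\phi$ with $\psi$.

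Next, for each $i$ I choose a rational point $\Theta_i$ in the relative interior of $\mathcal{T}_i$ sufficiently close to $\Theta$. Since $\Theta-\Phi$ is ample and ampleness is an open condition, $\Theta_i-\Phi$ is ample as well; by the definition of $\mathcal{T}_i=\mathcal{C}_{A,f_i}$ the morphism $f_i$ is the ample model of $K_Z+\Theta_i$. Lemma~\ref{hm3.6} then identifies $f_i$ with the result of running the $(K_Z+\Phi)$-minimal model program, confirming the clause about the $f_i$ in the theorem. To promote this to the statement that $\phi$ and $\psi$ are themselves MMP outputs, I continue the program one step past $X=X_1$: pick $\Theta'$ in the relative interior of $\mathcal{O}_0$ close to $\Theta$, so that $\phi\circ f$ is the ample model of $K_Z+\Theta'$ and $K_X+f_*\Theta'\sim_{\mathbb{R}}\phi^*H$ for some ample $H$ on $S$. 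On any fiber curve $C$ of $\phi$ this forces $(K_X+f_*\Theta')\cdot C=0$, while the projection formula applied to the ample $\mathbb{R}$-divisor $\Theta'-\Phi$ on $Z$ yields $f_*(\Theta'-\Phi)\cdot C>0$. Subtracting, $(K_X+f_*\Phi)\cdot C<0$, and together with $\rho(X/S)=1$ this makes $\phi$ a $(K_X+f_*\Phi)$-Mori fiber space; the symmetric argument handles $\psi$.

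Finally, I assemble the chain $\phi\dashrightarrow\psi$ into a composition of Sarkisov links by case analysis on the structure of the fan: $k=1$ yields a Type~(IV) link, $k=2$ yields a Type~(I), (II), or (III) link according to the dimensions of $S$ and $T$ and whether they coincide, and larger $k$ is handled by iterating. The main obstacle I anticipate is this last identification: although the wall-crossing data of Lemma~\ref{hm3.5} furnish all of the required morphisms, matching the configuration exactly to one of Types~(I)--(IV) and arranging the arrows into bona fide links for $k\ge 2$ requires careful combinatorial bookkeeping on the Picard numbers of the intermediate $X_i$ and the bases $S_i$.
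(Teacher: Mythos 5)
Your setup (wall-by-wall application of Lemma~\ref{hm3.5}, identification of each $f_i$ with an MMP output via Lemma~\ref{hm3.6} using the ampleness of $\Theta_i-\Phi$, and the verification that $\phi,\psi$ are $(K_X+f_*\Phi)$-Mori fiber spaces) matches the paper's proof. But the step you flag as ``the main obstacle'' and then defer --- assembling the chain into Sarkisov links, with ``larger $k$ handled by iterating'' --- is precisely the content of the theorem, and iterating is not an available move: for $k\geq 4$ the intermediate $X_i$ ($2\leq i\leq k-1$) carry no Mori fiber space structure, so the chain cannot be broken into links at the single point $\Theta$. What must be shown is that $k\leq 3$, and this is the heart of the paper's argument. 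Concretely: since $\mathcal{C}_{A,h}$ is one-dimensional (where $h:Z\to R$ is the ample model of $K_Z+\Theta$) and each $\mathcal{T}_i$ is two-dimensional, Theorem~\ref{hm3.3}(4) gives contraction morphisms $X_i\to R$ with $\rho(X_i/R)\leq 2$. If $\rho(X_i/R)=1$, then $X_i\to R$ is a Mori fiber space, which forces a facet of $\mathcal{T}_i$ to lie in $\partial\mathcal{E}_A(V)$, hence $i=1$ or $i=k$. So all intermediate $X_i$ satisfy $\rho(X_i/R)=2$, i.e.\ $\rho(X_2)=\cdots=\rho(X_{k-1})$; but Lemma~\ref{hm3.5} says adjacent chambers differ in Picard number by exactly one, a contradiction unless $k\leq 3$. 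The four resulting configurations ($k=1$; $k=2$ with $\rho(X/R)=1$; $k=2$ with $\rho(Y/R)=1$; $k=3$) are then matched to Types (IV), (I), (III), (II) respectively by checking when $s:S\to R$ and $t:T\to R$ are isomorphisms.

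A secondary inaccuracy: you assert that $k=2$ can yield a Type~(II) link. In fact $k=2$ yields Type~(I) or~(III) (one side is a Mori fiber space directly over $R$, the other factors through a divisorial contraction), while Type~(II) arises exactly when $k=3$, where both $s$ and $t$ are forced to be the identity because $\rho(X'/R)=2$ and $\rho(X/R)=1$. Without the bound $k\leq 3$ and this case match, the proof is incomplete.
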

\begin{proof}
By Lemma \ref{hm3.5}, there is the following commutative diagram: 

$$
\xymatrix{
X' =X_2\ar@{-->}[d]_p\ar@{-->}[rr] & & Y'=X_{k-1} \ar@{-->}[d]^q\\
X=X_1 \ar[d]_\phi & & Y=X_k \ar[d]^\psi\\ 
S \ar[rd]_s & & T \ar[ld]^t\\
& R &
}
$$

\noindent where $p$ and $q$ are birational and $\phi$ and $\psi$ are Mori fiber spaces. 
We note that there is a morphism between $X_i$ and $X_{i+1}$. 
We can take $\Theta_i$ contained in $\mathcal{T}_i$ 
such that $\Theta_i-\Phi$ is ample. 
As $\mathcal{T}_i$ has two dimensional, $X_i$ is $\mathbb{Q}$-factorial 
and so $f_i:Z\to X_i$ is the result of running 
the $(K_Z+\Phi)$-minimal model program by Lemma \ref{hm3.6}. 
By Theorem \ref{hm3.3}, there is a contraction morphism $X_i\to R$ 
with $\rho(X_i/R)\leq2$. 
If $\rho(X_i/R)=1$, then $X_i\to R$ has a Mori fiber space structure. 
By Theorem \ref{hm3.3}, $\mathcal{C}_{A, h}$ is one dimensional. 
Thus there is a facet of $\mathcal{T}_i$ 
which is contained in the boundary of $\mathcal{E}_A(V)$ 
and so $i=1$ or $i=k$. 
Hence, if $k\geq4$, then $\rho(X_2)=\cdots=\rho(X_{k-1})$. 
However by Lemma \ref{hm3.5} this is impossible. 
If $\rho(X/R)=2$ or $\rho(Y/R)=2$, then $k=1$ or $k=2$. 
Therefore, there are only four possibilities: 
\begin{itemize}
\item[(1)] $k=1$, 
\item[(2)] $k=2$, $\rho(X/R)=1$ and $\rho(Y/R)=2$,
\item[(3)] $k=2$, $\rho(X/R)=2$ and $\rho(Y/R)=1$, or 
\item[(4)] $k=3$. 
\end{itemize}

We consider Case (1). 
Then $X=Y$. Since $\mathcal{O}_0\neq\mathcal{O}_1$, $\rho(X/R)=2$ and so $R$ is a point. 
Thus we obtain a link of type (IV).

We consider Case (2). 
Since $\rho(X/R)=1$ and $\rho(Y/R)=2$, $s$ is the identity and $R$ is a point and so we obtain a link of type (I).

We consider Case (3). 
This case is the same as Case (1) and so we obtain a link of type (III).

Finally, we consider Case (4). 
Since $\rho(X'/R)=2$ and $\rho(X/R)=1$, 
$s$ is the identity. 
Similarly $t$ is also the identity 
and so we obtain a link of type (II). 
\end{proof}

The following assertion is the log canonical version of Theorem \ref{hm3.7}. 

\begin{thm}\label{hm3.7+}
Let $(Z, \Phi)$ be a log canonical surface. 
Suppose that $\Theta-\Phi$ is ample.

Then $\phi$ and $\psi$ are Mori fiber spaces 
which are outputs of the $(K_Z+\Phi)$-minimal model progaram. 
Moreover $\phi$ and $\psi$ are connected by Sarkisov links, 
where each $f_i$ is the result of running the $(K_Z+\Phi)$-minimal model program. 
\end{thm}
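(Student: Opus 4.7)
The plan is to mirror the proof of Theorem \ref{hm3.7} step by step, swapping every appeal to the $\mathbb{Q}$-factorial machinery for the log canonical analogues already established in this section. First, I would apply Lemma \ref{hm3.5+} to construct the same commutative diagram used in the proof of Theorem \ref{hm3.7}: between consecutive two-dimensional chambers $\mathcal{T}_i$ and $\mathcal{T}_{i+1}$, the lemma supplies a contraction of relative Picard number one which is either divisorial or a Mori fiber space, and the morphisms $X_i\to R$ are provided by Theorem \ref{hm3.3+}(2).

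Second, I would verify that each $f_i\colon Z\to X_i$ arises from running a $(K_Z+\Phi)$-minimal model program. Since $\mathcal{T}_i$ is two dimensional and meets the interior of $\mathcal{L}_A(V)$, Theorem \ref{hm3.3+}(3) guarantees that $f_i$ is special. Using that $\Theta-\Phi$ is ample, I can choose $\Theta_i\in\mathcal{T}_i$ with $\Theta_i-\Phi$ still ample; then Lemma \ref{hm3.6+} applied to $(Z,\Theta_i)$ and $(Z,\Phi)$ yields that $f_i$ is the result of running the $(K_Z+\Phi)$-MMP. Likewise $\phi=f_1$ and $\psi=f_k$, composed with the extremal contractions to $S$ and $T$, are Mori fiber spaces coming from the same MMP.

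Third, I would carry out exactly the combinatorial case analysis of the proof of Theorem \ref{hm3.7} on the relative Picard numbers $\rho(X_i/R)$. By Theorem \ref{hm3.3+}(4) these are at most two, and a Mori fiber space structure on $X_i\to R$ can appear only when a facet of $\mathcal{T}_i$ sits in the boundary of $\mathcal{E}_A(V)$, which forces $i=1$ or $i=k$; together with Lemma \ref{hm3.5+} this rules out $k\geq 4$ and leaves only the four configurations ($k=1$; $k=2$ with $\rho(X/R)=1$ and $\rho(Y/R)=2$; $k=2$ with $\rho(X/R)=2$ and $\rho(Y/R)=1$; $k=3$), corresponding to Sarkisov links of types (IV), (I), (III), (II) respectively.

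The only genuinely new ingredient beyond the proof of Theorem \ref{hm3.7} is the specialness check needed to invoke Lemma \ref{hm3.6+}, and this is immediate from Theorem \ref{hm3.3+}(3). The main potential obstacle is ensuring that when one perturbs $\Theta$ to a rational $\Theta_i$ in the relative interior of $\mathcal{T}_i$, the difference $\Theta_i-\Phi$ remains ample and $\Theta_i$ stays in $\mathcal{L}_A(V)$; this is routine since ampleness is an open condition on $\mathrm{WDiv}_{\mathbb{R}}(Z)$ and $\Theta$ lies in the interior of $\mathcal{L}_A(V)$ by hypothesis. With these two verifications in place, the remainder of the argument is a direct transcription of the $\mathbb{Q}$-factorial case.
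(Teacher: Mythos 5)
Your proposal is correct and follows exactly the route the paper takes: the paper's own proof of Theorem \ref{hm3.7+} simply says to substitute Theorem \ref{hm3.3+}, Lemma \ref{hm3.5+} and Lemma \ref{hm3.6+} for their $\mathbb{Q}$-factorial counterparts in the proof of Theorem \ref{hm3.7}, and you have carried out that substitution in detail, correctly identifying that the specialness of each $f_i$ (from Theorem \ref{hm3.3+}(3)) is the hypothesis replacing $\mathbb{Q}$-factoriality when invoking Lemma \ref{hm3.6+}.
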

\begin{proof}
It is sufficient 
to replace Theorem \ref{hm3.3}, Lemma \ref{hm3.5} and Lemma \ref{hm3.6} 
with Theorem \ref{hm3.3+}, Lemma \ref{hm3.5+} and Lemma \ref{hm3.6+}, respectively, 
in the proof of Theorem \ref{hm3.7}. 
\end{proof}


\section{Proof of Main Theorems}\label{m-sec6} 

\begin{lem}\label{hm4.1}
Let $(Z, \Phi)$ be a projective $\mathbb{Q}$-factorial log surface. 
Let $f:Z\to X$ and $g:Z\to Y$ be birational contractions. 
Let $\phi:X\to S$ and $\psi:Y\to T$ be two Mori fiber spaces 
which are outputs of the $(K_Z+\Phi)$-minimal model program.

Then we may find an ample $\mathbb{Q}$-divisor $A$ on $Z$ 
and a two dimensional affine subspace $V$ of $\WDiv_\mathbb{R}(Z)$, 
which is defined over the rationals, 
with the following properties: 
\begin{itemize}
\item[(1)] $\Theta-\Phi$ is ample for any $\Theta\in\mathcal{B}_A(V)$, 
\item[(2)] $\mathcal{A}_{A, \phi\circ f}$ and $\mathcal{A}_{A, \psi\circ g}$ 
are not contained in the boundary of $\mathcal{B}_A(V)$, 
\item[(3)] $V$ satisfies (1)-(4) of Theorem \ref{hm3.3}, 
\item[(4)] $\mathcal{C}_{A, f}$ and $\mathcal{C}_{A, g}$ have two dimensional, 
\item[(5)] $\mathcal{C}_{A, \phi\circ f}$ and $\mathcal{C}_{A, \psi\circ g}$ have one dimensional. 
\end{itemize}
\end{lem}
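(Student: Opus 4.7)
The plan is to produce explicit divisors $\Theta_\phi,\Theta_\psi$ on $Z$ for which $\phi\circ f$ and $\psi\circ g$ are the ample models, and then to locate a $\mathbb Q$-rational $2$-dimensional affine subspace $V$ containing (a translate of) each and satisfying the genericity hypotheses of Theorem~\ref{hm3.3} via the Bertini-type Corollary~\ref{hm3.4}.

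\textbf{Step 1: realizing the Mori fiber spaces as ample models.}
Fix ample $\mathbb Q$-divisors $H_S$ on $S$, $H_T$ on $T$, and $H$ on $Z$. Since $\phi\colon X\to S$ is a Mori fiber space of $(X,f_*\Phi)$ with $\rho(X/S)=1$, the divisor $-(K_X+f_*\Phi)$ is $\phi$-ample; a fiber-intersection calculation yields a positive rational $\alpha$ making $K_X+f_*\Phi+\alpha f_*H$ numerically $\phi$-trivial, hence $\mathbb R$-linearly equivalent to $\phi^*D$ for some $\mathbb Q$-divisor $D$ on $S$. Taking $\beta\gg 0$ with $D+\beta H_S$ ample, set
\[
\Theta_\phi := \Phi+\alpha H+\beta f^*\phi^*H_S.
\]
Abundance together with Lemma~\ref{ample-model}(2) identifies $\phi\circ f$ as the ample model of $K_Z+\Theta_\phi$, and $\Theta_\phi-\Phi$ is a positive $\mathbb Q$-combination of ample divisors, hence ample. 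Construct $\Theta_\psi$ symmetrically from $g$ and $\psi$.

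\textbf{Step 2: choosing $A$ and $V$.}
Pick a small general ample $\mathbb Q$-divisor $A$ on $Z$ with $A\le\Theta_\phi$ and $A\le\Theta_\psi$, small enough that $\Theta-\Phi$ remains ample for every $\Theta$ in a neighborhood of the segment from $\Theta_\phi$ to $\Theta_\psi$. Set $B_\phi:=\Theta_\phi-A$ and $B_\psi:=\Theta_\psi-A$, and fix a $\mathbb Q$-rational affine subspace $W\subset\WDiv_\mathbb R(Z)$ containing $B_\phi,B_\psi$ and spanning $\NS(Z)$. Corollary~\ref{hm3.4} provides a dense Zariski-open set $U\subset G(2,W)$ of $\mathbb Q$-rational $2$-planes satisfying (1)--(4) of Theorem~\ref{hm3.3}. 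Since the regions $\mathcal A_{A,\phi\circ f}(\cdot)$ and $\mathcal A_{A,\psi\circ g}(\cdot)$ are relatively open inside the ample-model decomposition (Theorem~\ref{hm3.3}(1)), any sufficiently small $\mathbb Q$-rational perturbation of the affine line through $B_\phi$ and $B_\psi$ to a $2$-plane $V\in U$ still meets $\mathcal A_{A,\phi\circ f}(V)$ and $\mathcal A_{A,\psi\circ g}(V)$ in the interior of $\mathcal B_A(V)$, delivering (1), (2), and (3).

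\textbf{Step 3: verifying the dimension conditions.}
Because $f$ is birational and $X$ is $\mathbb Q$-factorial (Remark~\ref{output}), the connected component of $\mathcal C_{A,f}$ meeting the interior of $\mathcal B_A(V)$ must span $V$ by Theorem~\ref{hm3.3}(3), whence $\dim\mathcal C_{A,f}=2$; likewise $\dim\mathcal C_{A,g}=2$, yielding (4). Applying Theorem~\ref{hm3.3}(4) to the extremal contraction $\phi$ gives
\[
\rho(X/S)=\dim\mathcal C_{A,f}-\dim(\mathcal C_{A,\phi\circ f}\cap\mathcal C_{A,f})=1,
\]
so $\dim\mathcal C_{A,\phi\circ f}\ge 1$; since $\phi\circ f$ is not birational, Theorem~\ref{hm3.3}(3) forbids $\mathcal C_{A,\phi\circ f}$ from spanning the $2$-plane $V$, forcing $\dim\mathcal C_{A,\phi\circ f}=1$. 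The same argument for $\psi\circ g$ completes (5).

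The hard part is the compatibility in Step~2: Corollary~\ref{hm3.4} selects a generic $2$-plane, whereas we need $V$ to carry specific divisors witnessing both Mori fiber structures as ample models. This is resolved by exploiting openness of the ample-model loci: rather than insisting that $V$ pass exactly through $B_\phi$ and $B_\psi$, we take a small $\mathbb Q$-rational perturbation of the line they span into the generic open locus, and nearby interior points of the perturbed $V$ still witness $\phi\circ f$ and $\psi\circ g$ as ample models.
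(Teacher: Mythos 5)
Your overall strategy (produce explicit boundary divisors witnessing $\phi\circ f$ and $\psi\circ g$ as ample models, then pass to a generic rational $2$-plane near the line through them via Corollary \ref{hm3.4}) is the same as the paper's, and your Steps 2--3 essentially reproduce the paper's argument. The problem is in Step 1, and it is a genuine gap. You fix $\alpha$ by the condition that $K_X+f_*\Phi+\alpha f_*H$ be numerically $\phi$-trivial, and then need $f$ to be a weak log canonical model of $K_Z+\Theta_\phi$ in order to invoke Lemma \ref{ample-model}(2). But for an ample $H$ on $Z$ and a birational contraction $f$, the negativity lemma gives $f^*f_*H-H\geq 0$ with strictly positive coefficients along every component of $\Exc(f)$ (since $(f^*f_*H-H)\cdot C=-H\cdot C<0$ for exceptional curves $C$). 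Hence
\[
K_Z+\Theta_\phi-f^*\bigl(K_X+f_*\Theta_\phi\bigr)=\bigl(K_Z+\Phi-f^*(K_X+f_*\Phi)\bigr)-\alpha\bigl(f^*f_*H-H\bigr),
\]
and since $\alpha$ is pinned by the fiber-intersection condition (rescaling $H$ rescales $\alpha$ inversely, leaving $\alpha H$ unchanged), this divisor can fail to be effective. Then $f$ is not $(K_Z+\Theta_\phi)$-non-positive, Lemma \ref{ample-model}(2) does not apply, and $\phi\circ f$ need not be the ample model of $K_Z+\Theta_\phi$ (in such a situation $K_Z+\Theta_\phi$ can even be ample, so that its ample model is the identity). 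The paper avoids exactly this by keeping the ample perturbation $\delta H$ small enough that $f$ and $g$ remain $(K_Z+\Phi+\delta H)$-negative, and then reaching the $\phi$-trivial class not by scaling $H$ but by adding $F=f^*F_1$ for a general effective $F_1\sim_{\mathbb Q}-(K_X+f_*\Phi_0+f_*H)+\phi^*C$; being a pullback, $F$ does not disturb $f$-negativity. Your construction needs this device (or an equivalent one) to be correct.

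A secondary, also unaddressed, point: you never verify that $\Theta_\phi$ lies in $\mathcal{B}_A(V)$, i.e.\ has coefficients at most one. With $\beta\gg0$ and with $\Phi$ possibly having coefficients equal to one, $\Phi+\alpha H+\beta f^*\phi^*H_S$ will generically violate this. The paper handles it by first shrinking $\Phi$ to a $\mathbb{Q}$-divisor $\Phi_0\leq\Phi$ with $A+(\Phi_0-\Phi)$ ample (which is also what makes your condition (1) work) and by choosing all the added divisors as general members of free linear systems so that $\lfloor\Phi_0+H+F+G\rfloor=0$. You should incorporate both corrections into Step 1; the rest of your argument then goes through.
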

\begin{proof}
We put $\Delta=f_*\Phi$ and $\Gamma=g_*\Phi$. 
We take general very ample divisors $G_0, \ldots, G_k\geq 0$ 
such that $G_1, \ldots, G_k$ generate $\NS(Z)$ 
and put $A=G_0/4$, $H_i=G_i/2$ and $H=A+H_1+\cdots+H_k$. 
We take sufficiently ample divisors $C$ on $S$ and $D$ on $T$ 
such that 
\[-(K_X+\Delta)+\phi^*C \text{ and } -(K_Y+\Gamma)+\psi^*D\]
are both ample. 
If we take a sufficiently small rational number $0<\delta<1$, then 
\[-(K_X+\Delta+\delta f_*H)+\phi^*C \text{ and } -(K_Y+\Gamma+\delta g_*H)+\psi^*D\]
are also both ample and $f$ and $g$ are both $(K_Z+\Phi+\delta H)$-negative 
since ampleness and negativity of contractions are both open conditions. 
Replacing $H$ by $\delta H$, 
we may assume that $\delta=1$. 
We take a $\mathbb{Q}$-divisor $\Phi\geq\Phi_0\geq 0$ 
such that 
\begin{gather*}
A+(\Phi_0-\Phi), \\
-(K_X+f_*\Phi_0+f_*H)+\phi^*C \\
\intertext{ and } -(K_Y+g_*\Phi_0+g_*H)+\psi^*D
\end{gather*}
are all ample and $f$ and $g$ are both $(K_Z+\Phi_0+H)$-negative. 
We take general $\mathbb{Q}$-divisors $F_1\geq 0$ and $G_1\geq 0$ 
such that 
\begin{gather*}
F_1\sim_\mathbb{Q}-(K_X+f_*\Phi_0+f_*H)+\phi^*C, \\
G_1\sim_\mathbb{Q}-(K_Y+g_*\Phi_0+g_*H)+\psi^*D \\
\intertext{and}
\lfloor \Phi_0+H+F+G\rfloor=0 
\end{gather*}
where $F=f^*F_1$ and $G=g^*G_1$. 
Let $V_0$ be the affine subspace of $\WDiv_\mathbb{R}(Z)$, 
which is the translation by $\Phi_0$ 
of the affine subspace spanned by $H_1, \ldots, H_k, F$ and $G$. 
Suppose that $\Theta=A+B$ is contained in $\mathcal{B}_A(V_0)$. 
Then 
\[\Theta-\Phi=(A+\Phi_0-\Phi)+(B-\Phi_0)\]
is ample since $A+\Phi_0-\Phi$ is ample and $B-\Phi_0$ is nef by the definition of $V_0$. 
Since $f$ is $(K_Z+\Phi_0+H)$-negative, it is $(K_Z+\Phi_0+F+H)$-negative 
and $g$ is similarly $(K_Z+\Phi_0+G+H)$-negative. 
Thus $\Phi_0+F+H\in\mathcal{A}_{A, \phi\circ f}(V_0)$, 
$\Phi_0+G+H\in\mathcal{A}_{A, \psi\circ g}(V_0)$, 
$f$ is a minimal model of $K_Z+\Phi_0+F+H$ 
and $g$ is a minimal model of $K_Z+\Phi_0+G+H$. 
Then $V_0$ satisfies (1)-(4) of Theorem \ref{hm3.3}.

Since $H_1, \ldots, H_k$ generate $\NS(Z)$, 
there are some real constants $h_1, \ldots, h_k$ 
such that $G$ is numerically equivalent to $\sum h_iH_i$. 
Then $\Phi_0+F+\delta G+H-\delta(\sum h_iH_i)$ and $\Phi_0+F+H$
are numerically equivalent. 
If $\delta>0$ is sufficiently small, 
then $\Phi_0+F+\delta G+H-\delta(\sum h_iH_i)\in\mathcal{B}_A(V_0)$. 
Thus $\mathcal{A}_{A, \phi\circ f}$ and $\mathcal{A}_{A, \psi\circ g}$ 
are not contained in the boundary of $\mathcal{B}_A(V_0)$. 
Now by (3) of Theorem \ref{hm3.3} each of $\mathcal{A}_{A, f}(V_0)$ and $\mathcal{A}_{A, g}(V_0)$ spans $V_0$. 
Therefore, since $\rho(X/S)=\rho(Y/T)=1$, 
each of $\mathcal{A}_{A, \phi\circ f}(V_0)$ and $\mathcal{A}_{A, \psi\circ g}(V_0)$ spans 
the affine hyperplane of $V_0$.

Let $V_1$ be the translation by $\Phi_0$ of the two dimensional affine subspace 
spanned by $F+H-A$ and $G+H-A$. 
Let $V$ be a small general perturbation of $V_1$, 
which is defined over the rationals. 
Then (2) holds and
(1) always holds for any subspace of $V_0$. 
By Corollary \ref{hm3.4}, (3) holds 
and then we see that (4) and (5) hold. 
\end{proof}

The following assertion is the log canonical version of Lemma \ref{hm4.1}. 

\begin{lem}\label{hm4.1+}
Let $(Z, \Phi)$ be a projective log canonical surface. 
Let $f:Z\to X$ and $g:Z\to Y$ be birational contractions. 
Let $\phi:X\to S$ and $\psi:Y\to T$ be two Mori fiber spaces 
which are outputs of the $(K_Z+\Phi)$-minimal model program.

Then we may find an ample $\mathbb{Q}$-divisor $A$ on $Z$ 
and a two dimensional affine subspace $V$ of $\WDiv_\mathbb{R}(Z)$, 
which is defined over the rationals, 
with the following properties: 
\begin{itemize}
\item[(1)] $\Theta-\Phi$ is ample for any $\Theta\in\mathcal{L}_A(V)$, 
\item[(2)] $\mathcal{A}_{A, \phi\circ f}$ and $\mathcal{A}_{A, \psi\circ g}$ 
are not contained in the boundary of $\mathcal{L}_A(V)$, 
\item[(3)] $V$ satisfies (1)-(4) of Theorem \ref{hm3.3}, 
\item[(4)] $\mathcal{C}_{A, f}$ and $\mathcal{C}_{A, g}$ have two dimensional, 
\item[(5)] $\mathcal{C}_{A, \phi\circ f}$ and $\mathcal{C}_{A, \psi\circ g}$ have one dimensional. 
\end{itemize}
\end{lem}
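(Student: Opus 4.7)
The plan is to follow the proof of Lemma \ref{hm4.1} almost verbatim, making two structural substitutions: (a) replace the polytope $\mathcal{B}_A(V)$ by its log canonical analogue $\mathcal{L}_A(V)$ throughout, and (b) invoke Theorem \ref{hm3.3+} in place of Theorem \ref{hm3.3}. The crucial compatibility input is that the birational contractions $f$ and $g$ (and hence $\phi\circ f$, $\psi\circ g$) are all special; this comes for free from Lemma \ref{special2}, since they arise as steps in a $(K_Z+\Phi)$-minimal model program issuing from a log canonical surface.

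Concretely, I would set $\Delta=f_*\Phi$ and $\Gamma=g_*\Phi$, choose general very ample divisors $G_0,G_1,\ldots,G_k$ with $G_1,\ldots,G_k$ generating $\NS(Z)$, and put $A=G_0/4$, $H_i=G_i/2$, $H=A+\sum H_i$. Then pick ample divisors $C$ on $S$ and $D$ on $T$ and rescale $H$ so that the two divisors $-(K_X+\Delta+f_*H)+\phi^*C$ and $-(K_Y+\Gamma+g_*H)+\psi^*D$ become ample, exactly as in Lemma \ref{hm4.1}. The new step is to pick an auxiliary $\mathbb{Q}$-divisor $0\le\Phi_0\le\Phi$ and general $\mathbb{Q}$-divisors $F_1, G_1$ in the resulting ample classes such that the pair $(Z, \Phi_0+H+F+G)$ is log canonical, where $F=f^*F_1$ and $G=g^*G_1$; log canonicity of $(Z,\Phi_0)$ is automatic from $\Phi_0\le\Phi$, and it is preserved by adding sufficiently small general effective divisors. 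Letting $V_0$ denote the translate by $\Phi_0$ of the linear span of $H_1,\ldots,H_k, F, G$, property (1) follows exactly as in Lemma \ref{hm4.1} by splitting $\Theta-\Phi=(A+\Phi_0-\Phi)+(B-\Phi_0)$ into an ample and a nef summand.

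The verification of (2)--(5) on the final two-dimensional space $V$ then proceeds in parallel with the original: the divisors $\Phi_0+F+H$ and $\Phi_0+G+H$ lie in $\mathcal{A}_{A,\phi\circ f}(V_0)$ and $\mathcal{A}_{A,\psi\circ g}(V_0)$ respectively, Theorem \ref{hm3.3+}(3) applied to the special contractions $f$ and $g$ shows that $\mathcal{A}_{A,f}(V_0)$ and $\mathcal{A}_{A,g}(V_0)$ each span $V_0$, the numerical relation $G\equiv\sum h_i H_i$ supplies interior points of the relevant Mori fiber chambers, and the standard cut-down to a two-dimensional $V_1$ followed by a small general rational perturbation (using Corollary \ref{hm3.4}, now invoked for Theorem \ref{hm3.3+}) produces a $V$ satisfying all five properties.

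The main obstacle I anticipate is arranging for $F_1$ and $G_1$ to be general enough that $(Z,\Phi_0+H+F+G)$ is log canonical in arbitrary characteristic. This relies on a Bertini-type statement for basepoint-free linear systems over an algebraically closed field together with the observation, via Lemma \ref{special2}, that $f$ and $g$ are isomorphisms over neighborhoods of the non-$\mathbb{Q}$-factorial points of $Z$, so pulling back $F_1$ and $G_1$ does not damage the singularity type at the bad locus. With these choices in place, every remaining step is a direct transcription of the $\mathbb{Q}$-factorial argument.
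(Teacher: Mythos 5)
Your proposal is correct and takes exactly the paper's route: the paper's own proof of Lemma \ref{hm4.1+} is simply ``proved as in Lemma \ref{hm4.1},'' and your write-up is a faithful expansion of that, with the right substitutions ($\mathcal{L}_A(V)$ for $\mathcal{B}_A(V)$, Theorem \ref{hm3.3+} for Theorem \ref{hm3.3}, and specialness of $f$ and $g$ via Lemma \ref{special2}). The extra care you take to keep the perturbed pair log canonical via a Bertini-type argument for general members of very ample systems is precisely the implicit modification the paper relies on.
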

\begin{proof}
This statement is proved as in Lemma \ref{hm4.1}. 
\end{proof}

\begin{proof}[Proof of Theorem \ref{main-theorem1}]
We take an ample $\mathbb{Q}$-divisor $A$ on $Z$ 
and a two dimensional affine subspace $V$ of $\WDiv_\mathbb{R}(Z)$ 
given by Lemma \ref{hm4.1}. 
We take $\Theta_0\in\mathcal{A}_{A, \phi\circ f}(V)$ and $\Theta_1\in\mathcal{A}_{A, \psi\circ g}(V)$ 
belonging to the interior of $\mathcal{B}_A(V)$. 
As $V$ has two dimensional, removing two points $\Theta_0$ and $\Theta_1$ 
divides the boundary of $\mathcal{E}_A(V)$ into the two parts. 
The part of the boundary consisting only of divisors 
which is not big is contained in the interior of $\mathcal{B}_A(V)$ 
by the definition of pseudo-effective divisors. 
If we trace this part from $\Theta_0$ to $\Theta_1$, 
then we obtain finitely many points $\Theta_i$ $(2\leq i\leq k)$ 
which  are contained in at most  three polytopes $\mathcal{C}_{A, h_{i,j}}(V)$. 
We note that if $\Theta_i$ is contained in only one polytope, 
then this polytope is a corner of $\mathcal{E}_A(V)$. 
Then Theorem \ref{hm3.7} implies that there is a Sarkisov link $\sigma:X_i\dashrightarrow Y_i$ 
and $\sigma$ is connected by these links. 

\begin{tikzpicture}
\draw[very thick]  (3, -0.5) -- (3, 3) -- (-3, 3)node[left] {$\partial\mathcal{B}_A(V)$} ; 
\draw (-1.5, 1.5) -- (0, 1/2) -- node[below, sloped] {$\cdots$} (2, 0);
\draw[ultra thick] (-2, 3) -- (-1.5, 1.5);
\draw[ultra thick] (2, 0) -- (3, 0); 
\draw (-1.5, 1.5) -- (-1, 3); 
\draw (0, 1/2) -- (-1, 3); 
\draw ($(0, 1/2)!1/2!(2, 0)$) -- ($(0, 1/2)!1/2!(-1, 3)$); 
\draw ($(0, 1/2)!1/2!(-1, 3)$) -- (3, 3); 
\draw ($(0, 1/2)!1/2!(2, 0)$) -- (3, 3); 


\node[fill=white] at (0.7, 2.2) {$\mathcal{E}_A(V)$}; 

\fill ($(-2, 3)!1/2!(-1.5, 1.5)$)node[left] {$\Theta_0$} circle (2pt); 
\fill (2.7,0)node[below] {$\Theta_1$} circle (2pt); 

\fill (-1.5, 1.5)node[below left] {$\Theta_2$} circle (2pt); 
\fill (0, 1/2)node[below] {$\Theta_3$} circle (2pt); 
\fill ($(0, 1/2)!1/2!(2, 0)$) circle (2pt); 
\fill (2, 0)node[below] {$\Theta_k$} circle (2pt); 
\end{tikzpicture}
\end{proof}

\begin{proof}[Proof of Theorem \ref{main-theorem2}]
This statement is also proved as in Theorem \ref{main-theorem1}. 
\end{proof}


\section{Appendix}

In this section, we prove that $T$ (resp.~$S$) in a link of Type (I) (resp.~(III)), 
$S$ and $T$ in a link of Type (IV) 
are isomorphic to $\mathbb{P}^1$. 

\begin{prop}\label{P^1-1}
In a link of Type (I), $T=\mathbb{P}^1$. 
\end{prop}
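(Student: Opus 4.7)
The plan is to exploit that the exceptional divisor $E$ of the divisorial contraction $p\colon Y\to X$ is a rational curve which must dominate $T$ via $\psi$, thereby producing a non-constant morphism $\mathbb{P}^1\to T$ that forces $T\cong\mathbb{P}^1$.

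First I would set up the basic structure. Since $p\colon Y\to X$ is a divisorial contraction and $\rho(X)=1$, we have $\rho(Y)=2$; combined with $\rho(Y/T)=1$ from the Mori fiber space $\psi\colon Y\to T$, this forces $\dim T=1$, and as $T$ is normal and projective by the definition of contraction morphism, it is a smooth projective curve. The exceptional divisor $E:=\Exc(p)$ is a prime divisor whose class spans the $(K_Y+\Delta_Y)$-negative extremal ray $R_E$ contracted by $p$; by the Extremal Rational Curves proposition of Section \ref{m-sec3}, $R_E$ is generated by the class of some rational curve $C\subset Y$, which must be contracted by $p$ and hence lie in $E$. Irreducibility of $E$ forces $C=E$ as reduced curves, so $E$ itself is a rational curve.

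Next I would show $\psi|_E\colon E\to T$ is surjective by a cone-theoretic argument. The Mori cone $\NE(Y)$ has exactly two extremal rays, $R_E$ and the ray $R_F$ spanned by a general fiber class $[F]$ of $\psi$, and these are distinct because $p$ and $\psi$ are non-isomorphic extremal contractions; hence $[E]$ and $[F]$ are numerically independent. Suppose for contradiction that $E$ is contained in some fiber $\psi^{-1}(t_0)$; writing that fiber as $mE+G$ with $m\ge 1$ and $G\ge 0$ effective gives $[F]=m[E]+[G]$ numerically. Since $[G]$ lies in $\NE(Y)=R_E+R_F$, we may write $[G]=a[E]+b[F]$ with $a,b\ge 0$, and comparing $[E]$-coefficients in $[F]=(m+a)[E]+b[F]$ yields $m+a=0$, contradicting $m\ge 1$ and $a\ge 0$. (The boundary case $G=0$ gives $[F]=m[E]$ directly, violating $R_E\neq R_F$.) Hence $E$ is horizontal and $\psi|_E$ is surjective.

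Finally, composing the normalization $\mathbb{P}^1\to E$ (which exists because $E$ is rational) with $\psi|_E$ produces a non-constant morphism $\mathbb{P}^1\to T$; a standard Riemann--Hurwitz argument, valid in any characteristic, forces $T$ to have genus zero, so $T\cong\mathbb{P}^1$. The main technical point is the cone argument: one must carefully use both the numerical independence of $[E]$ and $[F]$ (resting on $p$ and $\psi$ being genuinely non-isomorphic extremal contractions) and the fact that $[G]$ lies in the cone $R_E+R_F$ to obtain the sign contradiction. The remaining steps are routine consequences of the results collected in Section \ref{m-sec3} and classical curve theory.
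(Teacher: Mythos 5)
Your argument is correct, but it takes a genuinely different route from the paper's. You argue geometrically: the exceptional curve $E$ of the divisorial contraction $p\colon Y\to X$ is irreducible, is rational by the extremal rational curves proposition of Section \ref{m-sec3}, and cannot lie in a fiber of $\psi$ because $[E]$ and the fiber class $[F]$ span the two distinct extremal rays of the two-dimensional cone $\NE(Y)$; hence $E$ dominates $T$ and $T$ is a smooth projective curve dominated by $\mathbb{P}^1$, so $T\cong\mathbb{P}^1$. The paper instead argues cohomologically: in characteristic zero it proves $H^1(X,\mathcal{O}_X)=0$ by Kodaira/Nadel vanishing, transfers this to $Y=X'$ by relative vanishing and the Leray spectral sequence, and concludes $H^1(T,\mathcal{O}_T)=0$; in positive characteristic it uses Theorem \ref{oo} to exclude non-trivial torsion line bundles on $X$, then on $Y$, then on $T$. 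Your approach is uniform in all characteristics, needs neither vanishing theorems nor Theorem \ref{oo}, and is in the same spirit as the paper's own treatment of Type (IV) links in Proposition \ref{P^1-2}; the paper's approach avoids producing an explicit rational curve dominating $T$ and is the one that would survive in higher dimensions. Two points to tighten: (i) in the last step, appeal to L\"uroth's theorem (or to the fact that genus does not increase under finite morphisms of smooth projective curves) rather than to Riemann--Hurwitz, since the induced map $\mathbb{P}^1\to T$ could a priori be inseparable in positive characteristic; (ii) you implicitly assume that $p$ contracts a $(K_Y+\Delta_Y)$-negative extremal ray for some boundary $\Delta_Y$, which is needed both for the irreducibility of $\Exc(p)$ and for the extremal rational curves proposition --- this is exactly what the paper also asserts when it takes $\Delta'$ with $-(K_{X'}+\Delta')$ $\alpha$-ample, and it follows from the construction of the links in Theorem \ref{hm3.7}, but it deserves an explicit sentence.
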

\begin{proof} We divide the proof into two cases, where the characteristic of the base field $k$ is zero or positive.

{\bfseries Case 1:} $\Char(k)=0$.

We have the following diagram: 
$$
\xymatrix{
X'\ar@{=}[r]\ar[d]_\alpha&Y\ar[d]^\psi \\
X\ar[d]_\phi&T\ar[ld] \\
\text{pt.}
}
$$
Then $(X, \Delta)$ is a $\mathbb{Q}$-factorial log surface or a log canonical surface 
such that $-(K_X+\Delta)$ is ample. 
If $(X, \Delta)$ is log canonical, then by the Kodaira vanishing theorem (cf.~\cite[Theorem 5.6.4]{fujino-book}) 
we obtain 
\[H^1(X, \mathcal{O}_X)=0.\]
Next suppose that $(X, \Delta)$ is a $\mathbb{Q}$-factorial log surface. 
Since ampleness is an open condition, we may assume that $\lfloor \Delta\rfloor=0$. 
Then $(X, \Delta)$ is kawamata log terminal outside finitely many points. 
Now we consider the exact sequence 
\[\cdots\to H^1(X, \mathcal{J}(X, \Delta))\to H^1(X, \mathcal{O}_X)\to 
H^1(X, \mathcal{O}_X/\mathcal{J}(X, \Delta))\to \cdots\]
where $\mathcal{J}(X, \Delta)$ is the multiplier ideal sheaf associated to the pair $(X, \Delta)$. 
By the Nadel vanishing theorem, $H^1(X, \mathcal{J}(X, \Delta))=0$. 
Since $\dim_k\Supp(\mathcal{O}_X/\mathcal{J}(X, \Delta))=0$, 
we obtain $H^1(X, \mathcal{O}_X/\mathcal{J}(X, \Delta))=0$. 
Thus we have 
\[H^1(X, \mathcal{O}_X)=0.\]

Now $\alpha:X'\to X$ is a divisorial contraction 
and we can take a boundary divisor $\Delta'$ 
such that $-(K_{X'}+\Delta')$ is $\alpha$-ample. 
Thus by the Kodaira vanishing theorem (cf.~\cite[Theorem 6.2]{fujino-tanaka}) we obtain 
\[R^1\alpha_*\mathcal{O}_{X'}=0.\]
By the Leray spectral sequence, 
$H^1(X', \mathcal{O}_{X'})\simeq H^1(X, \mathcal{O}_X)=0$. 
Moreover, applying the Leray spectral sequence to $\psi:Y\to T$, we obtain 
\[0\to H^1(T, \mathcal{O}_T)\to H^1(Y, \mathcal{O}_Y)\to \cdots.\]
Thus we obtain $H^1(T, \mathcal{O}_T)=0$ and so $T\simeq\mathbb{P}^1$

{\bfseries Case 2:} $\Char(k)>0$. 

If $X$ is $\mathbb{Q}$-factorial, then we may assume that $\lfloor \Delta\rfloor=0$. \\
Next suppose that $(X, \Delta)$ is log canonical 
such that $\lfloor \Delta\rfloor\neq0$ and $X$ is not $\mathbb{Q}$-factorial. 
We take a sufficiently ample divisor $H$ 
and we take a general member $B\sim_\mathbb{Q} \lfloor \Delta\rfloor+H$. 
Then 
\[K_X+\Delta+\epsilon H\sim_\mathbb{Q} K_X+\Delta-\epsilon\lfloor \Delta\rfloor+\epsilon B\]
for $0<\epsilon<1$. 
We put $\Delta_1=\Delta-\epsilon\lfloor \Delta\rfloor+\epsilon B$. 
If a rational number $\epsilon$ is sufficiently small, 
then $(X, \Delta_1)$ is a log surface such that $\lfloor \Delta_1\rfloor=0$ 
and $-(K_X+\Delta_1)$ is ample.
Thus, anyway,  we can apply Theorem \ref{oo} 
and there is no non-trivial torsion line bundle on $X$.

We can take a boundary divisor $\Delta'$ 
such that $-(K_{X'}+\Delta')$ is $\alpha$-ample as in Case 1. 
We take a torsion line bundle $\mathcal{L}'$ on $X'$. 
By the basepoint-free theorem (cf.~\cite[Theorem 13.12]{fujino-surfaces2}), there is a line bundle $\mathcal{L}$ 
such that $\mathcal{L}'\simeq\alpha^*\mathcal{L}$. 
Thus $\mathcal{L}$ is also a torsion line bundle. 
Since there is no non-trivial torsion line bundle on $X$, $\mathcal{L}=\mathcal{O}_X$ 
and so we obtain $\mathcal{L}'=\mathcal{O}_{X'}$. 
Therefore, there is also no non-trivial torsion line bundle on $X'=Y$.

We take a torsion line bundle $\mathcal{M}$ on $T$. 
Then $\phi^*\mathcal{M}$ is also a torsion line bundle on $Y$. 
Thus $\phi^*\mathcal{M}\simeq\mathcal{O}_Y$ and so we have $\mathcal{M}\simeq \mathcal{O}_T$. 
This means that there is no non-trivial torsion line bundle on $T$. 
Therefore $T\simeq\mathbb{P}^1$. 
\end{proof}

By symmetry, we obtain $S=\mathbb{P}^1$ in a link of Type (III). 

\begin{prop}\label{P^1-2}
In a link of Type (IV), $S=T=\mathbb{P}^1$. 
\end{prop}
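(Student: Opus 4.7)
The plan is to reduce to the proof of Proposition \ref{P^1-1} applied symmetrically to $\phi$ and $\psi$. The first step, and the one that really uses the Type (IV) geometry, is to show that $-(K_X+\Delta)$ is ample on all of $X$. Since $X=Y$ and both $\phi:X\to S$ and $\psi:X\to T$ are Mori fiber spaces coming out of the $(K_Z+\Phi)$-minimal model program, we have $\rho(X/S)=\rho(X/T)=1$ with $S,T$ curves, so $\rho(X)=2$. The Mori cone $\NE(X)$ is then a two-dimensional closed convex cone with exactly two extremal rays, and these two rays are precisely the ones contracted by $\phi$ and by $\psi$. Since $K_X+\Delta$ is negative on both, it is negative on $\NE(X)\setminus\{0\}$, so Kleiman's criterion yields that $-(K_X+\Delta)$ is ample.

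With global ampleness in hand, I would transcribe the argument of Proposition \ref{P^1-1} twice, once for $\phi$ and once for $\psi$. In characteristic zero the Kodaira vanishing theorem (if $(X,\Delta)$ is log canonical) or Nadel vanishing combined with the zero-dimensional support of $\mathcal{O}_X/\mathcal{J}(X,\Delta)$ (after using openness of ampleness to reduce to $\lfloor\Delta\rfloor=0$ in the $\mathbb{Q}$-factorial case) gives $H^1(X,\mathcal{O}_X)=0$. Since $\phi$ is a contraction morphism we have $\phi_*\mathcal{O}_X=\mathcal{O}_S$, and the Leray spectral sequence produces an injection $H^1(S,\mathcal{O}_S)\hookrightarrow H^1(X,\mathcal{O}_X)=0$. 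As $S$ is a normal (hence smooth) projective curve, $H^1(S,\mathcal{O}_S)=0$ forces $S\simeq\mathbb{P}^1$; the same argument with $\psi$ gives $T\simeq\mathbb{P}^1$.

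In positive characteristic I would imitate Case 2 of Proposition \ref{P^1-1}: first use the same perturbation trick to pass from $(X,\Delta)$ to a log surface $(X,\Delta_1)$ with $\lfloor\Delta_1\rfloor=0$ and $-(K_X+\Delta_1)$ still ample (for small $\epsilon>0$, by openness of the ample cone), so that Theorem \ref{oo} applies and $\Pic(X)$ is torsion-free. If $\mathcal{M}$ is a torsion line bundle on $S$, then $\phi^*\mathcal{M}$ is torsion on $X$ and hence trivial; combining this with the projection formula and $\phi_*\mathcal{O}_X=\mathcal{O}_S$ forces $\mathcal{M}\simeq\mathcal{O}_S$. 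Thus $\Pic(S)$ has no non-trivial torsion, so $S\simeq\mathbb{P}^1$, and symmetrically $T\simeq\mathbb{P}^1$.

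The only real obstacle I foresee is justifying the global ampleness of $-(K_X+\Delta)$ on $X$; after that the two fibrations are treated completely symmetrically, and the argument is a direct rerun of the Type (I) proof with $\phi$ (or $\psi$) playing the role that $\psi$ played there.
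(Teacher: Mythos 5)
Your argument is correct, but it is genuinely different from the one in the paper. The paper's proof of Proposition \ref{P^1-2} is a two-line geometric observation: since $\rho(X)=2$, the extremal rays contracted by $\phi$ and $\psi$ are distinct, so a general fiber $F$ of $\phi$ (which is isomorphic to $\mathbb{P}^1$, being a reduced irreducible $(K_X+\Delta)$-negative curve with $F^2=0$) is not contracted by $\psi$ and hence surjects onto $T$; a normal curve dominated by $\mathbb{P}^1$ is $\mathbb{P}^1$, and one concludes by symmetry for $S$. You instead first upgrade the relative ampleness of $-(K_X+\Delta)$ over $S$ and over $T$ to global ampleness via Kleiman's criterion --- which is valid, since $\NE(X)$ is a strongly convex two-dimensional cone whose two extremal rays are exactly the ones contracted by $\phi$ and $\psi$ --- and then rerun the cohomological argument of Proposition \ref{P^1-1} (Kodaira/Nadel vanishing in characteristic zero, torsion-freeness of $\Pic$ via Theorem \ref{oo} in positive characteristic) for each fibration. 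Your route is longer and invokes heavier machinery, but it makes the Type (IV) case uniform with the Type (I)/(III) cases and yields the extra information that $X$ carries an ample $-(K_X+\Delta)$, i.e., is a log del Pezzo-type surface with two fibrations; the paper's route is shorter and entirely elementary. Both are complete proofs.
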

\begin{proof}
Since $\rho(X)=2$, $\phi$ and $\psi$ contract different rational curves. 
We take a general fiber $F$ of $\phi$. 
Then $F$ is isomorphic to $\mathbb{P}^1$. 
Therefore $T\simeq\mathbb{P}^1$. 
By symmetry, we obtain $S\simeq\mathbb{P}^1$. 
\end{proof}


\end{document}